\documentclass[11pt,reqno]{amsart}
\usepackage{amssymb,amsmath,amscd,amsthm,amsfonts,wasysym,mathrsfs,amsxtra}
\usepackage{mathtools}
\usepackage{marginnote}
\usepackage{appendix}
\usepackage{hyperref}
\usepackage{tikz}
\usepackage[all,cmtip]{xy}
\usepackage{tikz-cd}

\setlength{\hoffset}{-1in}
\setlength{\voffset}{-1in}
\setlength{\oddsidemargin}{1in}
\setlength{\evensidemargin}{1in}
\setlength{\textwidth}{6.in}
\setlength{\textheight}{8in}
\setlength{\topmargin}{1in}
\setlength{\baselineskip}{14pt}

\newcommand{\s}{\mathbb{S}}

\newcommand{\CC}{\mathbb{C}}
\newcommand{\DD}{\mathbb{D}}

\newcommand{\PP}{\mathbb{P}}

\newcommand{\GG}{\mathbb{G}}

\newcommand{\Cc}{\mathcal{C}}
\newcommand{\Ee}{\mathcal{E}}
\newcommand{\Gg}{\mathcal{G}}
\newcommand{\cg}{\mathfrak{g}}

\newcommand{\Dd}{\mathcal{D}}
\newcommand{\Ff}{\mathcal{F}}
\newcommand{\Hh}{\mathcal{H}}
\newcommand{\Kk}{\mathcal{K}}
\newcommand{\Nn}{\mathcal{N}}
\newcommand{\Tt}{\mathcal{T}}
\newcommand{\Pp}{\mathcal{P}}
\newcommand{\Oo}{\mathcal{O}}
\newcommand{\Uu}{\mathcal{U}}

\newcommand{\Qq}{\mathcal{Q}}
\newcommand{\Xx}{\mathcal{X}}
\newcommand{\Yy}{\mathcal{Y}}
\newcommand{\R}{\mathcal{R}}

\newcommand{\V}{\mathcal{V}}

\newcommand{\ZZ}{\mathbb{Z}}

\newcommand{\LL}{\mathscr{L}}
\newcommand{\E}{\sf{E}}
\newcommand{\F}{\sf{F}}
\newcommand{\Hhh}{\sf{H}}
\newcommand{\G}{\sf{G}}
\newcommand{\spi}{\sf{\Psi}}
\newcommand{\SL}{\mathfrak{sl}}
\newcommand{\kk}{\underline{k}}

\newcommand{\bo}{\boldsymbol{1}}
\newcommand{\Vect}{\mathrm{Vect}}
\newcommand{\tdim}{\mathrm{dim}}
\newcommand{\gtdim}{\mathrm{gdim}}

\newcommand{\Hom}{\mathrm{Hom}}
\newcommand{\End}{\mathrm{End}}

\newcommand{\Ext}{\mathrm{Ext}}

\newcommand{\tdet}{\mathrm{det}}

\newcommand{\GLL}{\mathrm{GL}}
\newcommand{\SLL}{\mathrm{SL}}

\newtheorem{theorem}{Theorem}[section]

\newtheorem{lemma}[theorem]{Lemma}

\newtheorem{proposition}[theorem]{Proposition}

\newtheorem{corollary}[theorem]{Corollary}
\theoremstyle{definition}
\newtheorem{definition}[theorem]{Definition}

\theoremstyle{remark}
\newtheorem{remark}[theorem]{Remark}
\newtheorem{conjecture}[theorem]{Conjecture}

\numberwithin{equation}{section}

\title[Exceptional collections, t-structures, and categorical action]{Exceptional collections, t-structures, and categorical action of shifted \MakeLowercase{q}=0 affine algebra, $\SL_{2}$ case}

\begin{document}

\address{National Center for Theoretical Sciences} \email{yhhsu@ncts.ntu.edu.tw}

\author[You-Hung Hsu]{You-Hung Hsu}

\keywords{Derived categories,  Grassmannians, Grothendieck groups}

\makeatletter
\@namedef{subjclassname@2020}{%
	\textup{2020} Mathematics Subject Classification}
\makeatother

\subjclass[2020]{Primary 14F08, 14M15, 18G80 : Secondary 16E20, 18F30}

\maketitle

\begin{abstract}
In this article, we show that the categorical action of the shifted $q=0$ affine algebra $\dot{\Uu}_{0,N}(L\SL_{2})$ can be used to construct (or induce) t-structures on the weight categories. The main idea is to interpret the exceptional collection constructed by Kapranov \cite{Kap85}, \cite{Kap88} as convolution of Fourier-Mukai kernels in the categorical action via using the Borel-Weil-Bott theorem.

In particular, when the categories are the bounded derived category of coherent sheaves on Grassmannians. The t-structure we obtain is precisely the exotic t-structure defined by Bezrukavnikov \cite{Be1}, \cite{Be2} of the exceptional collections given by Kapranov. As an application, we calculate the matrix coefficients for generators of the shifted $q=0$ affine algebra on the basis given by Kapranov exceptional collections.
\end{abstract}

\tableofcontents

\section{Introduction}

The derived categories of coherent sheaves on algebraic varieties plays a central role in modern algebraic geometry and related areas. In particular, when the varieties are spaces of importance to geometric representation theory, e.g., Springer fibres, Nakajima quiver varieties, then their (equivariant) derived category of coherent sheaves also provide a good source for categorification of representations of certain Hecke algebras or quantum groups.

There are some important devices from homological algebras like exceptional objects and t-structures that would be frequently used to study them.  Also, these devices have applications to geometric representation theory for the varieties that we mention above, for example, see \cite{Be3}.

\subsection{Motivation}

In the paper \cite{Hsu}, the author introduce the shifted $q=0$ affine algebra $\dot{\Uu}_{0,N}(L\SL_{n})$, and he give a definition of its categorical action. Then he showed that there is a categorical action of $\dot{\Uu}_{0,N}(L\SL_{n})$ on the bounded derived category of coherent sheaves on partial flag varieties $\bigoplus_{\kk} \Dd^b(Fl_{\kk}(\CC^N))$. In this paper we focus on the $n=2$ case; i.e., we have the categorical action of $\dot{\Uu}_{0,N}(L\SL_{2})$ on the bounded derived category of coherent sheaves on Grassmannians $\bigoplus_{k} \Dd^b(\GG(k,N))$.

We try to relate the notion of categorical action defined in \textit{loc. cit.} to the exceptional objects and t-structures on those weight categories.  More precisely, since we constructed a categorical action of $\dot{\Uu}_{0,N}(L\SL_{2})$ on $\bigoplus_{k} \Dd^b(\GG(k,N))$, when it pass to the Grothendieck group $\bigoplus_{k} K(\GG(k,N))$ we obtain a representation of $\dot{\Uu}_{0,N}(L\SL_{2})$. Usually in representation theory, it would be good to find a basis for a representation such that generators of the algebra have a particularly simple action on it, e.g., crystal bases or canonical bases.

So the motivation of this paper comes from finding a good or natural basis for the Grothendieck group $K(\GG(k,N))$ and to study how the generators of $\dot{\Uu}_{0,N}(L\SL_{2})$ acting on them. Finally, throughout this article, all functors between derived categories are assumed to be derived functors, e.g., we will write $f^*$ instead of $Rf^*$ and so on.

\subsection{Basis from exceptional collections}

The Grassmannians and partial flag varieties are homogeneous spaces; i.e., they are certain quotient $G/P$ of a complex semisimple algebraic group $G$ by a parabolic subgroup $P$. The structure of their bounded derived category of coherent sheaves, denoted by $\Dd^b(G/P)$, has been studied over many years. In particular, by Beilinson, Kapranov \cite{B}, \cite{Kap85}, \cite{Kap88} for type A, and by Kuznetsov, Polishchuk \cite{KP} for other general types. Their studies lead to the construction of exceptional collections on $\Dd^b(G/P)$. We also refer to \cite{Ku} and \cite {PS} for some cases studies, and \cite{BLVdB} and \cite{E} for a characteristic free point of view.

Since the geometry we use to construct the categorical action is the partial flag varieties of type A,  the exceptional collections were constructed by Beilinson and Kapranov, see Corollary \ref{Corollary 1}, \ref{Corollary 2}. Moreover, those exceptional collections are full; i.e., the smallest triangulated subcategory containing the exceptional collection is the whole derived category. Thus when it pass to the Grothendieck group, we get a basis for the representation.

Let $P(a,b)$ the set of Young diagrams $\lambda$ such that $\lambda_{1} \leq a$ and $\lambda_{b+1} =0$. From Corollary \ref{Corollary 2}, the exceptional collection for $\Dd^b(\GG(k,N))$ is given by $\{\s_{\lambda}\V \ | \ \lambda \in P(N-k,k) \}$, where $\s_{\lambda}$ is the Schur functor associate to the Young diagram $\lambda$. Here $\V$ is the tautological rank $k$ bundle on $\GG(k,N)$. Passing to the Grothendieck group, we obtain the basis $\{[\s_{\lambda}\V]\ | \ \lambda \in P(N-k,k) \}$ for $K(\GG(k,N))$.

So we try to understand how the main generators $e_{r}1_{(k,N-k)}$ and $f_{s}1_{(k,N-k)}$ of $\dot{\Uu}_{0,N}(L\SL_{2})$ acting on $\{[\s_{\lambda}\V]\}$ for all $r,s \in \ZZ$. Recall that the main generators of $\dot{\Uu}_{0,N}(L\SL_{2})$ acting on $\bigoplus_{k} \Dd^b(\GG(k,N))$ is defined by using the following correspondence diagram 
\begin{equation} \label{diagram1}   
	\xymatrix{ 
		&&Fl(k-1,k)=\{0 \overset{k-1}{\subset} V' \overset{1}{\subset} V \overset{N-k}{\subset} \CC^N \} 
		\ar[ld]_{p_1} \ar[rd]^{p_2}   \\
		& \GG(k,N)  && \GG(k-1,N)
	}
\end{equation} where $Fl(k-1,k)$ is the $3$-step partial flag variety and the numbers above the inclusion indicate the increasing of dimensions. Here $p_{1}$ and $p_{2}$ are natural projections. Let $\V,\V'$  be the tautological bundles on $Fl(k-1,k)$ of rank $k$, $k-1$, respectively. Then there is a natural line bundle $\V/\V'$ on $Fl(k-1,k)$.  The generators $e_{r}1_{(k,N-k)}$ acting on $\bigoplus_{k} \Dd^b(\GG(k,N))$ by lifting to the following functor 
\begin{equation*}
{\E}_{r}\bo_{(k,N-k)}:=p_{2*}(p_{1}^{*} \otimes (\V/\V')^{r}):\Dd^b(\GG(k,N)) \rightarrow \Dd^b(\GG(k-1,N))
\end{equation*} and similarly for the lift of  $f_{s}1_{(k,N-k)}$ to ${\F}_{s}\bo_{(k,N-k)}$.

After decategorifying, it is not easy to calculate the action of generators on this basis directly by definition. For example, when $r=0$, we have to calculate ${e}1_{(k,N-k)}([\s_{\lambda}\V])=p_{2*}(p_{1}^{*}([\s_{\lambda}\V]))$ for all $\lambda$, which may take an amount of efforts.

To solve this, we lift it to the categorical level, which means we study the categorical action $\dot{\Uu}_{0,N}(L\SL_{2})$ on the exceptional collection $\{\s_{\lambda}\V\}$. The key observation is to interpret $\s_{\lambda}\V$ as convolution of Fourier-Mukai kernels by using the relative version of Borel-Weil-Bott theorem, see Theorem \ref{Theorem 5} and Corollary \ref{Corollary 6} in Section \ref{subsection 5.1} for details. 

Since the functor ${\E}\bo_{(k,N-k)}$ is a Fourier-Mukai transformation with kernel given by $\Ee\bo_{(k,N-k)}=\iota_{*}\Oo_{Fl(k-1,k)}$, where $\iota:Fl(k-1,k) \rightarrow \GG(k,N) \times \GG(k-1,N)$. The calculation of ${\E}\bo_{(k,N-k)}(\s_{\lambda}\V)$ is given by convolution of Fourier-Mukai kernels, which can be calculated by using  the relations in the definition of categorical action. As an application, when we pass to the Grothendieck group, we obtain the result for ${e}1_{(k,N-k)}([\s_{\lambda}\V])$. Similarly for the functor ${\F}1_{(k,N-k)}$ and ${f}1_{(k,N-k)}([\s_{\lambda}\V])$, see Proposition \ref{Proposition 3} and Corollary \ref{Corollary 7} in Section \ref{subsection 6.1} for details.

However, for ${e}_{r}1_{(k,N-k)}$ and ${f}_{s}1_{(k,N-k)}$ with $r, s \neq 0$, the above method fails. Since lifting to the categorical levels, the calculation of ${\E}_{r}\bo_{(k,N-k)}(\s_{\lambda}\V)$ and ${\F}_{s}\bo_{(k,N-k)}(\s_{\lambda}\V)$ involves relations that are not defined in the definition of categorical action. We solve this by the help of dual exceptional collections, see Definition \ref{definition 8} for the definition. By definition, after passing to the Grothendieck group, the dual exceptional collection can be thought as a dual basis for the exceptional collection with respect to the bilinear Euler form $\sum_{i \in \ZZ} (-1)^i \dim_{\CC} \Hom_{\Dd} ( - , - [i] )$.

The dual exceptional collection for $\{\s_{\lambda}\V\}$ is given by $\{\s_{\mu}(\CC^N/\V)[-|\mu|]\ | \ \mu \in P(k,N-k) \}$, and $|\mu|$ is the sum of all boxes in $\mu$.  So every element $[\Xx] \in K(\GG(k,N))$ can be written as 
\begin{equation} \label{eq 1} 
	[\Xx]=\sum_{\lambda \in P(N-k,k) } (-1)^{|\lambda|} \gtdim R\Hom(\s_{\lambda^{*}}(\CC^N/\V), \Xx) [\s_{\lambda}\V]
\end{equation} where $\lambda^*$ is the conjugate partition of $\lambda$ and $\gtdim$ is the notation for graded dimension, i.e., 
\begin{equation*}
	\gtdim R\Hom(\Xx, \Yy):=\sum_{i \in \ZZ} (-1)^{i} \tdim \Ext^{i}(\Xx,\Yy)
\end{equation*} for all $\Xx, \Yy \in \Dd^b(\GG(k,N))$.

Now the key observation is ${e}_{r}1_{(k,N-k)}$ can be written as $e1_{(k,N-k)}$ conjugated by powers of determinant line bundles. More precisely, we let $g:K(\GG(k,N)) \rightarrow K(\GG(k,N))$ be the linear map given by multiplication with $[\det(\V)]$. Then $e_{r}1_{(k,N-k)}=g^{-r}eg^{r}1_{(k,N-k)}$ for all $r \in \ZZ$.  So to calculate ${e}_{r}1_{(k,N-k)}([\s_{\lambda}])$ with $r \neq 0$, we first apply $g^{r}1_{(k,N-k)}$ to $[\s_{\lambda}\V]$ and then using (\ref{eq 1}) to express $[\det(\V)]^{r}[\s_{\lambda}\V]$ in terms of linear combination of the basis $\{[\s_{\lambda}\V]\ | \ \lambda \in P(N-k,k) \}$. Next, we apply $e1_{(k,N-k)}$ to the linear combination and using the result we already know for ${e}1_{(k,N-k)}([\s_{\lambda}\V])$ to simplify it. Then applying $g^{-r}1_{(k-1,N-k+1)}$ to the rest terms and using (\ref{eq 1}) again to express those terms as linear combination of the basis $\{[\s_{\lambda'}\V']\ | \ \lambda' \in P(N-k+1,k-1) \}$. Finally, we have determine the coefficients for each $[\s_{\lambda'}\V']$ to be non-zero by using the Borel-Weil-Bott Theorem. Similarly for ${f}_{s}1_{(k,N-k)}$ with $s \neq 0$. We refer the readers to Theorem \ref{Theorem 10} and \ref{Theorem 11} for details. It would be interesting to see the geometric and representation-theoretic meanings behind these matrix coefficients.

\subsection{Relation to t-structures}

When we study the categorical action on the exceptional collections given by Beilinson and Kapranov,  we also found it relates to theory of t-structures.  Roughly speaking, there are two descriptions of a t-structure on the bounded derived category of coherent sheaves on Grassmannians. One is defined by using the exceptional collection given by Kapranov, and the other is defined by using the categorical action of $\dot{\Uu}_{0,N}(L\SL_{2})$.

Let us explain a bit about the motivation. Denote $\cg$ be the Lie algebra of a complex simple algebraic group $G$, and  $\Nn \subset \cg$ be the subvariety of nilpotent elements. In \cite{Be1}, R. Bezrukavnikov constructed a new t-structure, which he called it the exotic t-structure, on the derived category of $G$-equivariant coherent sheaves on $\Nn$. This t-structure has two descriptions, one is defined by the perverse t-structure on the equivariant coherent sheaves, the other is defined by a quasi-exceptional set, where the conditions are a bit weaker than the definition of a exceptional collection. Then he proved that the two descriptions coincide, and thus give a elementary way to prove the Lusztig-Vogan bijection.
 
Since we have the exceptional collection and its dual exceptional collection given by Kapranov on $\Dd^b(\GG(k,N))$. By R. Bezrukavnikov's work \cite{Be1} (see Theorem \ref{Theorem 6}), we obtain an exotic t-structure on these categories $\Dd^b(\GG(k,N))$, denoted by $(\Dd^{\geq 0}_{ex}(k,N-k), \Dd^{\leq 0}_{ex}(k,N-k))$, see Corollary \ref{Corollary 4}.

On the other hand, peoples also use categorical actions of affine braid groups and quantum affine algebras to defined t-structures. For example, Bezrukavnikov and Mirkovic \cite{BM} defined exotic t-structures on derived category of coherent sheaves on varieties related to the Springer resolution $\pi:\widetilde{\Nn} \rightarrow \Nn$ by using a certain affine braid group actions that developed in \cite{BR} on these categories.

Similarly, Cautis-Koppensteiner \cite{CKo} defined exotic t-structures on derived category of coherent sheaves on convolution varieties related to Beilinson-Drinfeld and affine Grassmannians by using categorical action of quantum affine algebras. Their work also recover the  t-structures in \cite{BM}  and \cite{BMR}.

Motivated by this, it is natural to ask whether the exotic t-structure $(\Dd^{\geq 0}_{ex}(k,N-k), \Dd^{\leq 0}_{ex}(k,N-k))$ can be described by the categorical action of $\dot{\Uu}_{0,N}(L\SL_{2})$ that defined by the author. 

We achieve this by first  showing that given an abstract  categorical action of $\dot{\Uu}_{0,N}(L\SL_{2})$ on $\bigoplus_{k}\Kk(k,N-k)$ and a fixed t-structure on the highest weight category $\Kk(0,N)$. Then with some extra assumptions we can use the action to construct (or induce) t-structure on the other weight categories $\Kk(k,N-k)$, see Theorem \ref{Theorem 7} for details. 

The main tool we use is a result by Polishchuk \cite{Po}, see Theorem \ref{Theorem 8}. Basically, we have to construct functors $\varphi_{k}:\Kk(0,N) \rightarrow \Kk(k,N-k)$ such that $(\varphi_{k})^{R} \circ \varphi_{k}$ is t-exact for all $k$, then we translate the t-structure from $\Kk(0,N)$ to $\Kk(k,N-k)$ via $\varphi_{k}$. The definition of the functors $\varphi_{k}$ can be thought as an abstracting of the tilting bundle $\bigoplus_{\lambda} \s_{\lambda}\V$ of the Kapranov exceptional collection.  Moreover, we can show that the functors $({\F}_{N-k-1}\bo_{(k,N-k)})^{R}$ and $({\E}\bo_{(k,N-k)})^{R}({\spi}^{-})[k-2]$ are t-exact on these t-structures. 

As a corollary, by modifying the functor $\varphi_{k}$ we can obtain another t-structure on $\Kk(k,N-k)$ such that $({\F}_{N-k-1}\bo_{(k,N-k)})^{L}$ and $({\E}\bo_{(k,N-k)})^{L}({\spi}^{-})[k-2]$ are t-exact on these t-structures, see Corollary \ref{Corollary 8}. 

Then we apply to the case where the weight categories are $\Dd^b(\GG(k,N))$, the highest weight category $\Dd^b(\GG(0,N))$ is the category of graded vector spaces. We equip the highest weight category with the standard t-structure, then we induce a t-structure on $\Dd^b(\GG(k,N))$ by Corollary \ref{Corollary 8}, denoted by $(\Dd^{\geq 0}_{act}(k,N-k), \Dd^{\leq 0}_{act}(k,N-k))$

Now we have two t-structures on $\Dd^b(\GG(k,N))$, we prove that the two t-structures agree, i.e. $\Dd^{\geq 0}_{ex}(k,N-k)= \Dd^{\geq 0}_{act}(k,N-k)$ and $\Dd^{\leq 0}_{ex}(k,N-k)= \Dd^{\leq 0}_{act}(k,N-k)$, see Theorem \ref{Theorem 9}.

\subsection{Some further remarks}

We mention some remarks about topics that we do not address in this article but would like to study them in the future.

First, since all the results in this paper are for the $\SL_{2}$ case, we would like to generalize them to the $\SL_{n}$ case in the future.

Second, in \cite{Be1}, \cite{Be2}, Bezrukavnikov also study the heart of the t-structure of a quasi-exceptional set. In particular, he give a description of the irreducible objects in this heart. However, in this paper, we do not study the heart of the t-structure of the Kapranov exceptional collection. So we would like to understand the irreducible objects in this t-structure, especially its relation to the categorical action of shifted $q=0$ affine algebra.

Also, since the Kapranov exceptional collection is strong and full, the direct sum of the exceptional objects in this exceptional collection, i.e., $\Tt:=\bigoplus_{\lambda}  \s_{\lambda}\V$, is a tilting bundle on $\GG(k,N)$. We have the following equivalence
\begin{equation} \label{eq 30}
R\Hom(\Tt, ):\Dd^b(\GG(k,N)) \xrightarrow{\cong} \Dd^b(A^{op}-mod)
\end{equation} where $A:=\End(\Tt)$.

We expect that under this derived equivalence, the heart of this t-structure is equivalent to the abelian category of (right) modules over the endomorphism algebra of the tilting bundle. More precisely, we have the following conjecture.

\begin{conjecture}
Under the above equivalence (\ref{eq 30}), the restriction of the functor to the heart of the exotic t-structure gives an equivalence between abelian categories.
\begin{equation*}
R\Hom(\Tt, ):\Dd^{\geq 0}_{ex}(k,N-k) \cap \Dd^{\leq 0}_{ex}(k,N-k) \xrightarrow{\cong} A^{op}-mod
\end{equation*}
\end{conjecture}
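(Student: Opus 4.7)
My plan is to reduce the conjecture to identifying the exotic t-structure $(\Dd^{\leq 0}_{ex}(k,N-k), \Dd^{\geq 0}_{ex}(k,N-k))$ with the \emph{tilting t-structure} attached to $\Tt$, whose aisles are
\[
\Dd^{\leq 0}_T := \{\Xx : \Ext^i(\Tt, \Xx) = 0 \text{ for } i > 0 \}, \qquad \Dd^{\geq 0}_T := \{\Xx : \Ext^i(\Tt, \Xx) = 0 \text{ for } i < 0 \}.
\]
Since $\Tt$ is a tilting generator, classical tilting theory guarantees that this is a bounded t-structure and that $\Phi := R\Hom(\Tt,-)$ is t-exact with respect to it and the standard t-structure on $\Dd^b(A^{op}-mod)$, restricting to an abelian equivalence of hearts. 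Thus the conjecture is equivalent to the equality of pairs $(\Dd^{\leq 0}_{ex}, \Dd^{\geq 0}_{ex}) = (\Dd^{\leq 0}_T, \Dd^{\geq 0}_T)$.

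The inclusion $\Dd^{\geq 0}_{ex} \subseteq \Dd^{\geq 0}_T$ is almost immediate: by Theorem \ref{Theorem 6} each $\s_\lambda\V$ lies in the exotic heart, hence in $\Dd^{\leq 0}_{ex}$, so for $\Xx \in \Dd^{\geq 0}_{ex}$ the t-structure axioms give $\Ext^i(\s_\lambda\V, \Xx) = \Hom(\s_\lambda\V, \Xx[i]) = 0$ for $i < 0$; summing over $\lambda$ yields $\Xx \in \Dd^{\geq 0}_T$. The harder direction is $\Dd^{\leq 0}_{ex} \subseteq \Dd^{\leq 0}_T$, which requires $\Ext^i(\s_\lambda\V, \Xx) = 0$ for $i > 0$ whenever $\Xx \in \Dd^{\leq 0}_{ex}$. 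The strategy is to combine Serre duality
\[
\Ext^i(\s_\lambda\V, \Xx) \cong \Ext^{d-i}(\Xx,\, \s_\lambda\V \otimes \omega_{\GG(k,N)})^\vee, \qquad d = k(N-k),
\]
with a Borel-Weil-Bott decomposition of $\s_\lambda\V \otimes \omega_{\GG(k,N)}$ into twists of elements of the dual Kapranov collection $\{\s_\mu(\CC^N/\V)[-|\mu|]\}$, and then invoke the dual characterization of $\Dd^{\leq 0}_{ex}$ in terms of Hom-vanishing against these dual objects in positive degrees.

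The main obstacle is the combinatorial matching: one must verify that the cohomological shifts arising from Borel-Weil-Bott applied to $\s_\lambda\V \otimes \omega_{\GG(k,N)}$ align exactly with the shifts $[-|\mu|]$ built into the dual collection, so that Serre duality converts the defining vanishing of $\Dd^{\leq 0}_{ex}$ into the vanishing of $\Ext^{>0}(\s_\lambda\V, \Xx)$. An alternative route that avoids Serre duality is to reconstruct the exotic t-structure as an iterated gluing along the semiorthogonal decomposition $\Dd^b(\GG(k,N)) = \langle \s_\lambda\V \rangle_{\lambda \in P(N-k,k)}$: gluing the standard t-structures on each factor $\langle \s_\lambda\V \rangle \simeq \Dd^b(\Vect)$ produces the tilting t-structure precisely because the Kapranov collection is strong (so the recollement triangles split into honest short exact sequences at each stage), while on the other hand this iterated gluing is one of the standard presentations of Bezrukavnikov's exotic t-structure. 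Matching the two gluings would identify the t-structures without any duality computation, and this approach has the advantage of extending naturally to the $\SL_n$ case that is also referenced in the author's future directions.
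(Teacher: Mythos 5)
The statement you are attempting is left as an open conjecture in the paper --- the author gives no proof of it --- so there is no argument of the paper to compare with, and your proposal has to stand on its own. It does not: its central step is false.

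Your reformulation (the conjecture holds iff the exotic t-structure coincides with the tilting t-structure of $\Tt$) is the right one, but the two t-structures genuinely differ, so the inclusion you call ``the harder direction,'' $\Dd^{\leq 0}_{ex}(k,N-k) \subseteq \Dd^{\leq 0}_{T}$, cannot be established by any Serre-duality/Borel--Weil--Bott bookkeeping. Concretely, for any nonzero $\mu \in P(k,N-k)$ the dual object $F_{\mu}=\s_{\mu}(\CC^N/\V)[-|\mu|]$ is one of the listed generators of $\Dd^{\leq 0}_{ex}(k,N-k)$ in Corollary \ref{Corollary 4}, yet it fails your defining condition for $\Dd^{\leq 0}_{T}$: since $\Oo_{\GG(k,N)}=\s_{0}\V$ is a direct summand of $\Tt$ and $H^{0}(\GG(k,N),\s_{\mu}(\CC^N/\V))\cong \s_{\mu}\CC^{N}\neq 0$, one gets $\Ext^{|\mu|}(\Tt,F_{\mu})\neq 0$ with $|\mu|>0$. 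For instance on $\GG(1,2)=\PP^{1}$ the object $F_{(1)}=\Oo(1)[-1]$ lies in the exotic heart (it is in $\Dd^{\geq 0}_{ex}$ via the rotated Euler triangle $\Oo^{\oplus 2}[-1]\to \Oo(1)[-1]\to \Oo(-1)$), while $R\Hom(\Tt,\Oo(1)[-1])$ is concentrated in cohomological degree $1$, not $0$. The same objects undercut your alternative gluing route: gluing standard t-structures along the semiorthogonal decomposition depends on the chosen shift of each piece, and the normalization forced by the dual collection (the shifts $[-|\mu|]$, needed so that $\Hom(F_{\mu},E_{\lambda})$ sits in degree $0$, as in Theorem \ref{Theorem 6}) produces Bezrukavnikov's t-structure, which differs from the tilting one exactly by those shifts; strongness of the Kapranov collection does not make them disappear, so ``glued $=$ tilting because the collection is strong'' is not correct.

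You should also register the consequence: because your reduction is essentially valid (a bounded t-structure is determined by its heart, and $R\Hom(\Tt,-)$ is t-exact for the tilting t-structure), the computation above shows that the conjecture cannot be proved by identifying the exotic and tilting t-structures, and, read literally with the conventions of Theorem \ref{Theorem 6} and Corollary \ref{Corollary 4}, it is contradicted by these examples. Any viable approach would instead have to exhibit a projective generator of the exotic heart (which is not $\Tt$, but some object obtained from the Kapranov collection by a sequence of tilts or mutations) and compare its endomorphism algebra with $A=\End(\Tt)$, rather than assert that $R\Hom(\Tt,-)$ itself carries the exotic heart onto $A^{op}$-modules.
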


Next, recall that in \cite{BM}, they defined the exotic t-structures by using the affine braid group action and define the notion of braid positive for those t-structures. According to \cite{CK}, categorical $\cg$ actions can be used to construct braid group actions, and from Theorem 4.1 in \cite{CKo}, they show that the t-structure constructed from the categorical action of quantum affine algebra is braid positive. 

For categorical action of shifted $q=0$ affine algebra, the author do not know whether such an action can be used to construct braid group actions or not. However, we still expect that such an action can be used to construct an action that is similar to braid group, e.g., braid monoid, and to understand more about the t-structure of the Kapranov exceptional collection.

Finally, in \cite{KP}, Kuznetsov-Polishchuk constructed exceptional collections for general type Grassmannians and flag varieties $G/P$, they conjecture that such exceptional collections are full and strong. Since in this paper we interpret objects in the Kapranov exceptional collection as convolution of Fourier-Mukai kernels, we also expect that the shifted $q=0$ affine algebras for general type (not defined yet) can be used to partially address the conjectures by Kuznetsov-Polishchuk.

\subsection{Organization}
This article is organized as follows.

In Section \ref{Section 2}, we recall the definitions of exceptional collections and Fourier-Mukai transformations. After this we recall the full exceptional collections constructed by Beilinson (for projective spaces) \cite{B} and Kapranov (for Grassmannians) \cite{Kap85}, see Corollary \ref{Corollary 1}, \ref{Corollary 2}.

In Section \ref{Section 3}, we recall the definition of shifted $q=0$ affine algebras $\dot{\Uu}_{0,N}(L\SL_{2})$ and its categorical action. Then we mention the result where there is a categorical action of $\dot{\Uu}_{0,N}(L\SL_{2})$ on $\bigoplus_{k}\Dd^b(\GG(k,N))$ \cite{Hsu}, see Theorem \ref{Theorem 3}.

In Section \ref{Section 4}, we recall the definition of t-structures and some technical tools we need about t-structures. Then we mention a result by Bezrukavnikov \cite{Be1} (Theorem \ref{Theorem 6}) which says we can construct a t-structure from an exceptional collections and its dual. Then we apply this result to the Kapranov exceptional collections and obtain a t-structure on $\Dd^b(\GG(k,N))$, see Corollary \ref{Corollary 4}.

In Section  \ref{Section 5}, first we mention the Borel-Weil-Bott theorem (Theorem \ref{Theorem 4}) and gives a relative version of it, see Theorem \ref{Theorem 5}. Then we relate the Kapranov exceptional collections and the categorical action of $\dot{\Uu}_{0,N}(L\SL_{2})$ on $\bigoplus_{k}\Dd^b(\GG(k,N))$, see Corollary \ref{Corollary 6}. Next we prove the main theorem of this article, i.e.; Theorem \ref{Theorem 7}, which says we can use categorical action of $\dot{\Uu}_{0,N}(L\SL_{2})$ to induce t-structures on weighted categories. Finally we apply this result to $\Dd^b(\GG(k,N))$ to indue a t-structure on it, and we show that the induced t-structure is precisely the same as the t-structure given by the Kapranov exceptional collections in Corollary \ref{Corollary 4}, see Theorem \ref{Theorem 9}.

In Section \ref{Section 6}, we pass to the Grothendieck group and the Kapranov exceptional collection gives a basis for $K(\GG(k,N))$. We calculate the matrix coefficients for the main generators $e_{r}1_{(k,N-k)}$, $f_{s}1_{(k,N-k)}$ of $\dot{\Uu}_{0,N}(L\SL_{2})$ acting on this basis. We calculate the case $r=s=0$ first. The main idea is to calculate the action on categorical level first via using Corollary \ref{Corollary 6} to write the action in terms of convolution of kernels. Then passing to the Grothendieck group to obtain the result, see Corollary \ref{Corollary 7}. Finally we calculate the case $r,s \neq 0$ by using Corollary \ref{Corollary 7} and the help of dual exceptional collection and Borel-Weil-Bott theorem, see Theorem \ref{Theorem 10}, \ref{Theorem 11} for details.

\subsection{Acknowledgements}
The author would like to thank Prof. Wu-yen Chuang and Cheng-Chiang Tsai for many helpful discussions. He also want to thank Prof. Sabin Cautis and Clemens Koppensteiner for some valuable correspondences.

\section{Generators for the derived categories of coherent sheaves on Grassmannians } \label{Section 2}

In this section, we recall the works of A. Beilinson \cite{B} and M. Kapranov \cite{Kap85}, \cite{Kap88} about the structures of bounded derived categories of coherent sheaves on projective space, Grassmannians. More precisely, we give the exceptional collections for their bounded derived categories of coherent sheaves.

\subsection{Schur functors and tensor products}

First, we recall briefly a basic tool from representation theory which would be used in later sections. 

The materials for Schur functors follows from Chapter 4 in \cite{FH}. Let $\lambda=(\lambda_{1},...,\lambda_{n})$ be a non-increasing sequence of positive integers. We can represent $\lambda$ as a Young diagram with $n$ rows, aligned on the left, such that the $i$th row
has exactly $\lambda_{i}$ cells. The size of $\lambda$, denoted by $|\lambda|$, is the number $|\lambda|=\sum_{i=1}^{n} \lambda_{i}$. The transpose diagram $\lambda^*$ is obtained by exchanging rows and columns of $\lambda$.

\begin{definition}
Let $n \geq 1$ be a positive integer and $\lambda=(\lambda_{1},...,\lambda_{n})$ be a sequence of non-increasing positive integers. The Schur functor $\s_{\lambda}$ associated to $\lambda$ is defined as a functor 
\[
\s_{\lambda}:\Vect_{\CC} \rightarrow \Vect_{\CC}
\] 
such that for any vector space $V$, $\s_{\lambda}V$ coincides with the image of the Young symmetrizer $c_{\lambda}$ in the space of tensors of $V$ of rank $n$: i.e., $\s_{\lambda}V=\text{Im} (c_{\lambda}|_{V^{\otimes n}})$.
\end{definition}

An useful application of the Schur functors comes out in the description of the spaces of
exterior power of a tensor product, which is the following lemma.

\begin{lemma} [\cite{Kap85} Lemma 0.5] \label{lemma 1}
Let $V$ and $W$ be two vector spaces and let $p$ be a non-negative integer. Then we have a natural isomorphism of $\GLL(V) \times \GLL(W)$-modules
\[
\bigwedge^{p}(V \otimes W) \cong \bigoplus_{|\lambda|=p} \s_{\lambda}V \otimes \s_{\lambda^*} W 
\] where $\lambda$ runs all Young diagram of size $p$ and each summand occurs with multiplicity one.
\end{lemma}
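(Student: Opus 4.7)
The plan is to deduce the decomposition from Schur--Weyl duality applied to $V$ and $W$ simultaneously, and then to extract the sign-isotypic component of $(V\otimes W)^{\otimes p}$ under the diagonal action of the symmetric group $S_{p}$.

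First I would recall Schur--Weyl duality, which gives a canonical decomposition $V^{\otimes p} \cong \bigoplus_{\lambda \vdash p} S^{\lambda} \otimes \s_{\lambda}V$ as $S_{p} \times \GLL(V)$-module, where $S^{\lambda}$ denotes the Specht module and $\s_{\lambda}V = 0$ whenever $\ell(\lambda) > \dim V$; the analogous decomposition holds for $W^{\otimes p}$. Tensoring the two and identifying $(V \otimes W)^{\otimes p} \cong V^{\otimes p} \otimes W^{\otimes p}$, with $S_{p}$ acting diagonally, gives
\[
(V\otimes W)^{\otimes p} \cong \bigoplus_{\lambda,\mu \vdash p} (S^{\lambda} \otimes S^{\mu}) \otimes \s_{\lambda}V \otimes \s_{\mu}W
\]
as $S_{p} \times \GLL(V) \times \GLL(W)$-module. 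Since $\bigwedge^{p}(V\otimes W)$ is by definition the sign-isotypic component of $(V\otimes W)^{\otimes p}$, applying $\mathrm{Hom}_{S_{p}}(\mathrm{sgn}, -)$ reduces the lemma to computing $\dim \mathrm{Hom}_{S_{p}}(\mathrm{sgn}, S^{\lambda} \otimes S^{\mu})$.

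The crucial input is the classical identity $\mathrm{sgn} \otimes S^{\lambda} \cong S^{\lambda^{*}}$ of Specht modules. Combined with Schur's lemma and the self-duality of irreducible $S_{p}$-representations, this yields
\[
\dim \mathrm{Hom}_{S_{p}}(\mathrm{sgn}, S^{\lambda} \otimes S^{\mu}) = \dim \mathrm{Hom}_{S_{p}}(S^{\mu}, S^{\lambda^{*}}) = \delta_{\mu,\lambda^{*}},
\]
from which the desired decomposition drops out. The step I expect to require the most care is verifying that the $\GLL(V)\times\GLL(W)$-equivariance is preserved through the extraction of the sign-isotypic component; this should not be a serious difficulty, since every identification used is natural in $V$ and $W$ and commutes with the $\GLL$-actions, while the transpose identity is purely a statement about $S_{p}$-modules.

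As an alternative route bypassing Specht-module combinatorics, one can argue entirely at the level of characters: the trace of a pair $(g,h) \in \GLL(V)\times\GLL(W)$ on $\bigwedge^{p}(V\otimes W)$ equals the elementary symmetric polynomial $e_{p}(x_{i}y_{j})$ in the pairwise products of eigenvalues of $g$ and $h$, and the dual Cauchy identity $\prod_{i,j}(1+x_{i}y_{j}) = \sum_{\lambda} s_{\lambda}(x)\,s_{\lambda^{*}}(y)$ produces precisely the character of the right-hand side. Complete reducibility of polynomial representations of $\GLL(V)\times\GLL(W)$ then promotes this character equality to the claimed natural isomorphism.
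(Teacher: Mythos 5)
Your argument is correct. The paper itself offers no proof of this lemma: it is quoted verbatim as Lemma 0.5 of Kapranov \cite{Kap85}, where it is in turn the classical dual Cauchy decomposition, usually obtained exactly as in your second route, from the identity $\prod_{i,j}(1+x_{i}y_{j})=\sum_{\lambda}s_{\lambda}(x)s_{\lambda^{*}}(y)$ together with complete reducibility of polynomial $\GLL(V)\times\GLL(W)$-representations. Your primary argument via Schur--Weyl duality is a genuinely self-contained alternative and is sound: the identification of $\bigwedge^{p}(V\otimes W)$ with the sign-isotypic component of $(V\otimes W)^{\otimes p}$ is valid in characteristic zero, the diagonal $S_{p}$-action matches under $(V\otimes W)^{\otimes p}\cong V^{\otimes p}\otimes W^{\otimes p}$, and self-duality of Specht modules together with $\mathrm{sgn}\otimes S^{\lambda}\cong S^{\lambda^{*}}$ gives the multiplicity computation $\dim\Hom_{S_{p}}(\mathrm{sgn},S^{\lambda}\otimes S^{\mu})=\delta_{\mu,\lambda^{*}}$; summands with $\ell(\lambda)>\dim V$ or $\ell(\lambda^{*})>\dim W$ vanish automatically, consistent with the statement. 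What the Schur--Weyl route buys is naturality for free, since every identification commutes with the $\GLL$-actions and is functorial in $V$ and $W$; the character-theoretic route is shorter but, as stated, only yields existence of an isomorphism, and to claim a \emph{natural} one you would still need to invoke that each isotypic component appears with multiplicity one, so that the decomposition is canonical. Either way the lemma is established.
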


\subsection{Exceptional collections and Fourier-Mukai transformations}

In this section, we briefly recall the definition of exceptional objects/collections and Fourier-Mukai transformations. We follow the book \cite{Huy} for the definitions and details. 

Let $\Dd$ be a $\CC$-linear triangulated category. 

\begin{definition} 
An object $E \in \text{Ob}(\Dd)$ is called exceptional if 
\[
\Hom_{\Dd}(E,E[l])=\begin{cases}
\CC & \ \text{if} \ l=0 \\
0 & \ \text{if} \ l \neq 0.
\end{cases}
\]
\end{definition}

Then we define the notion of exceptional collections.

\begin{definition}
An ordered collection (sequence) $\{E_{1},...,E_{n}\}$, where $E_{i} \in \text{Ob}(\Dd)$ for all $1 \leq i \leq n$,  is called an exceptional collection (sequence) if each $E_{i}$ is exceptional and 
\[
\Hom_{\Dd}(E_{i},E_{j}[l])=\begin{cases}
\CC & \ \text{if} \ l=0, \ i=j \\
0 & \ \text{if} \ i>j \ \text{or if} \ l \neq 0, \ i=j.
\end{cases}
\] 
The collection (sequence) is called strong exceptional if in addition
\[
\Hom_{\Dd}(E_{i},E_{j}[l])=0 \ \text{for all} \ i, \ j \ \text{and} \ l \neq 0.
\]
\end{definition}

\begin{remark}
An exceptional collection (sequence) $\{E_{1},...,E_{n}\}$ is called full if the smallest triangulated subcategory of $\Dd$ containing all the objects $E_{1},...,E_{n}$ is $\Dd$ itself.
\end{remark}

Next, we introduce the language of Fourier-Mukai transformations. For $X$  a smooth complex projective variety, we will work with the bounded derived category of coherent sheaves on $X$, which we always denote it by $\Dd^b(X)$. Throughout this article, all pullback, pushforward, Hom, and tensor product functors of sheaves will be derived functors. 

\begin{definition} 
	Let $X$, $Y$ be two smooth projective varieties. A Fourier-Mukai kernel is any object $\Pp$ of the
	derived category of coherent sheaves on $X \times Y$ . Given $\Pp \in \Dd^b(X \times Y)$, we may define
	the associated Fourier-Mukai transform, which is the functor
	\[
	\Phi_{\Pp}:\Dd^b(X) \rightarrow \Dd^b(Y) 
	\]
	\[
	\ \ \ \ \ \ \ \ \Ff \mapsto \pi_{2*}(\pi_{1}^*(\Ff) \otimes \Pp)
	\] where $\pi_{1}$, $\pi_{2}$ are natural projections.
\end{definition}

We call $\Phi_{\Pp}$ the Fourier-Mukai transform with (Fourier-Mukai) kernel $\Pp$. For convenience, we would just write FM for Fourier-Mukai.

The first property of FM transforms is that they have left and right adjoints that are themselves FM transforms.

\begin{proposition} [\cite{Huy} Proposition 5.9] \label{Proposition 1}
	For $\Phi_{\Pp}:\Dd^b(X) \rightarrow \Dd^b(Y)$ is the FM transform with kernel $\Pp$, define 
	\[
	\Pp_{L}=\Pp^{\vee} \otimes \pi^*_{2}\omega_{Y}[\tdim  Y], \ \Pp_{R}=\Pp^{\vee} \otimes \pi^*_{1}\omega_{X}[\tdim X].
	\] Then
	\[
	\Phi_{\Pp_{L}}:\Dd^b(Y) \rightarrow \Dd^b(X), \  \Phi_{\Pp_{R}}:\Dd^b(Y) \rightarrow \Dd^b(X)
	\] are the left and right adjoints of $\Phi_{\Pp}$, respectively.
\end{proposition}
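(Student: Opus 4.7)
The plan is to verify both adjunction formulas by composing adjoints factor by factor along the decomposition $\Phi_{\Pp} = \pi_{2*} \circ (- \otimes \Pp) \circ \pi_1^*$, and then simplifying the resulting composition using Grothendieck-Serre duality together with the perfectness of $\Pp$ (which holds because $X \times Y$ is smooth). The ingredients I will draw on are: the adjunctions $\pi_i^* \dashv \pi_{i*} \dashv \pi_i^!$, the tensor-Hom adjunction $(- \otimes \Pp) \dashv R\Hom(\Pp,-)$, the identification $R\Hom(\Pp,-) \simeq \Pp^\vee \otimes -$ for $\Pp$ perfect, and the Grothendieck-Serre duality formulas
\[
\pi_2^!(\Gg) \simeq \pi_2^*(\Gg) \otimes \pi_1^*\omega_X [\tdim X], \qquad \pi_1^!(\Ff) \simeq \pi_1^*(\Ff) \otimes \pi_2^*\omega_Y [\tdim Y],
\]
available because $\pi_1$ and $\pi_2$ are smooth and proper between smooth projective varieties.

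For the right adjoint, I compose the right adjoints in reverse order to obtain
\[
\Phi_{\Pp}^R(\Gg) \simeq \pi_{1*} R\Hom(\Pp, \pi_2^!(\Gg)).
\]
Substituting the formula for $\pi_2^!$, using perfectness of $\Pp$ to rewrite $R\Hom(\Pp, -)$ as $\Pp^\vee \otimes -$, and reorganizing via the projection formula yields
\[
\Phi_{\Pp}^R(\Gg) \simeq \pi_{1*}\bigl(\pi_2^*(\Gg) \otimes \Pp^\vee \otimes \pi_1^*\omega_X [\tdim X]\bigr) = \pi_{1*}(\pi_2^*(\Gg) \otimes \Pp_R),
\]
which is $\Phi_{\Pp_R}(\Gg)$ by definition.

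For the left adjoint, I prefer a direct verification of the adjunction isomorphism. Starting from $\Hom_Y(\Gg, \Phi_{\Pp}(\Ff))$, I first use $\pi_2^* \dashv \pi_{2*}$ to move the computation to $X \times Y$, then dualize $\Pp$ using its perfectness to migrate it to the left entry, and finally apply Grothendieck duality for $\pi_1$ to return to $X$:
\begin{align*}
\Hom_Y(\Gg, \Phi_{\Pp}(\Ff)) & \simeq \Hom_{X \times Y}\bigl(\pi_2^*(\Gg), \pi_1^*(\Ff) \otimes \Pp\bigr) \\
& \simeq \Hom_{X \times Y}\bigl(\pi_2^*(\Gg) \otimes \Pp^\vee, \pi_1^*(\Ff)\bigr) \\
& \simeq \Hom_X\bigl(\pi_{1*}(\pi_2^*(\Gg) \otimes \Pp^\vee \otimes \pi_2^*\omega_Y[\tdim Y]), \Ff\bigr) \\
& = \Hom_X(\Phi_{\Pp_L}(\Gg), \Ff),
\end{align*}
exhibiting $\Phi_{\Pp_L}$ as the left adjoint.

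The main obstacle is pedantic but genuine: tracking which projection contributes which dualizing sheaf and which cohomological shift. Since $\pi_2$ has relative dimension $\tdim X$, its dualizing complex is $\pi_1^*\omega_X[\tdim X]$, whereas $\pi_1$ has relative dimension $\tdim Y$ and contributes $\pi_2^*\omega_Y[\tdim Y]$; swapping these is the most common error and is precisely what produces the asymmetry between $\Pp_L$ and $\Pp_R$ in the statement. The tensor-Hom step silently depends on $\Pp \in \Dd^b(X \times Y)$ being perfect, which is automatic from smoothness of $X \times Y$; after that, the argument is simply a choreography of standard adjunctions and the projection formula.
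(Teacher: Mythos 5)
Your argument is correct: the dualizing sheaves and shifts are attached to the right projections ($\pi_2^!$ contributing $\pi_1^*\omega_X[\tdim X]$ for $\Pp_R$, and $\pi_1^!$ contributing $\pi_2^*\omega_Y[\tdim Y]$ for $\Pp_L$), and the chain of adjunctions goes through. The paper gives no proof of its own but cites \cite{Huy}, Proposition 5.9, whose proof is essentially the same combination of the standard adjunctions, perfectness of $\Pp$, and Grothendieck--Verdier duality that you use, so your proposal matches the intended argument.
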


The second property is the composition of FM transforms is also a FM transform.

\begin{proposition} [\cite{Huy} Proposition 5.10] \label{Proposition 2}
	Let $X, Y, Z$ be smooth projective varieties over $\CC$. Consider objects $\Pp \in \Dd^b(X \times Y)$ and $\Qq \in \Dd^b(Y \times Z)$. They define FM transforms $\Phi_{\Pp}:\Dd^b(X) \rightarrow \Dd^b(Y)$, $\Phi_{\Qq}:\Dd^b(Y) \rightarrow \Dd^b(Z)$.  We would use $\ast$ to denote the operation for convolution, i.e.
	\[
	\Qq \ast \Pp:=\pi_{13*}(\pi_{12}^*(\Pp)\otimes \pi_{23}^{*}(\Qq))
	\]
	Then for $\R=\Qq \ast \Pp \in \Dd^b(X \times Z)$, we have $\Phi_{\Qq} \circ \Phi_{\Pp} \cong \Phi_{\R}$. 
\end{proposition}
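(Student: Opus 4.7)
The plan is to pass to the triple product $X\times Y\times Z$ equipped with the three pairwise projections $\pi_{12},\pi_{13},\pi_{23}$ and to massage the expression for $\Phi_{\Qq}\circ \Phi_{\Pp}(\Ff)$ by alternating flat base change with the projection formula until it takes the form of a Fourier-Mukai transform with kernel $\R=\Qq\ast\Pp$. To avoid a clash with the notation $\pi_{1},\pi_{2}$ reserved for pair projections, I temporarily write $p^{AB}_{i}$ for the $i$-th coordinate projection of $A\times B$. Unwinding the definitions of $\Phi_{\Pp}$ and $\Phi_{\Qq}$ gives
\begin{equation*}
    \Phi_{\Qq}\circ \Phi_{\Pp}(\Ff)\;=\;p^{YZ}_{2*}\bigl(p^{YZ*}_{1}\,p^{XY}_{2*}(p^{XY*}_{1}\Ff\otimes \Pp)\otimes \Qq\bigr),
\end{equation*}
while the target expression is $\Phi_{\R}(\Ff)=p^{XZ}_{2*}(p^{XZ*}_{1}\Ff\otimes \R)$.

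The first move is flat base change on the Cartesian square whose top and bottom rows are $\pi_{23}\colon X\times Y\times Z\to Y\times Z$ and $p^{XY}_{2}\colon X\times Y\to Y$, and whose vertical arrows are $\pi_{12}$ and $p^{YZ}_{1}$. This yields the functorial isomorphism $p^{YZ*}_{1}\,p^{XY}_{2*}\cong \pi_{23*}\,\pi_{12}^{*}$. Inserting this, and using the elementary identity $\pi_{12}^{*}p^{XY*}_{1}=\pi_{13}^{*}p^{XZ*}_{1}$ (both equal the pullback along the canonical projection $X\times Y\times Z\to X$), the expression becomes
\begin{equation*}
    p^{YZ}_{2*}\bigl(\pi_{23*}(\pi_{13}^{*}p^{XZ*}_{1}\Ff\otimes \pi_{12}^{*}\Pp)\otimes \Qq\bigr).
\end{equation*}

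The second move consists of two applications of the projection formula. First I would absorb $\Qq$ into $\pi_{23*}$ as $\pi_{23}^{*}\Qq$; next I use the identity $p^{YZ}_{2*}\pi_{23*}=p^{XZ}_{2*}\pi_{13*}$ (both sides equal the total projection onto $Z$), and then factor $\pi_{13}^{*}p^{XZ*}_{1}\Ff$ out of $\pi_{13*}$ by the projection formula again. The result is
\begin{equation*}
    p^{XZ}_{2*}\bigl(p^{XZ*}_{1}\Ff\otimes \pi_{13*}(\pi_{12}^{*}\Pp\otimes \pi_{23}^{*}\Qq)\bigr),
\end{equation*}
which by the definition of the convolution $\R=\Qq\ast\Pp$ is exactly $\Phi_{\R}(\Ff)$.

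The main technical point to verify is that flat base change and the projection formula hold at the derived level for the maps in question. Since $X$, $Y$, $Z$ are smooth and projective, all projections are flat and proper, so the standard derived base change and projection formula isomorphisms apply without additional hypotheses. The remaining piece of bookkeeping is to check that each isomorphism I have invoked is functorial in $\Ff$, so that the chain assembles into an isomorphism of functors $\Phi_{\Qq}\circ \Phi_{\Pp}\cong \Phi_{\R}$ rather than just a pointwise identification.
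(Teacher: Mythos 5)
Your argument is correct and is exactly the standard proof of this fact: the paper itself gives no proof but cites \cite{Huy}, Proposition 5.10, whose argument is precisely this passage to $X\times Y\times Z$ with flat base change on the Cartesian square over $Y$ followed by two applications of the projection formula. Nothing is missing; the flatness and properness of the projections justify the derived base change and projection formula steps, as you note.
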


\begin{remark} \label{remark 1}
	Moreover by \cite{Huy} remark 5.11, we have $(\Qq \ast \Pp)_{L} \cong (\Pp)_{L} \ast (\Qq)_{L}$ and $(\Qq \ast \Pp)_{R} \cong (\Pp)_{R} \ast (\Qq)_{R}$.
\end{remark}

\subsection{The complex projective space $\PP^N$}

Let $\Dd^b(\PP^N)$ be the bounded derived categories of coherent sheaves on the complex projective space $\PP^N$. In this subsection, we introduce a strong full exceptional collections for $\Dd^b(\PP^N)$, which is due to A. Beilinson \cite{B}.

Consider $\Oo_{\Delta}=\Delta_{*}\Oo_{\PP^N} \in \Dd^b(\PP^N \times \PP^N)$, which is the structure sheaf of the diagonal in $\PP^N \times \PP^N$, where $\Delta:\PP^N \rightarrow \PP^N \times \PP^N$ is the diagonal embedding. A. Beilinson constructed a locally free resolution of $\Oo_{\Delta}$ that can be described by two distinguished families of sheaves on $\PP^N$.

Let us fix some notations. Denote $\Oo_{\PP^N}(-1)$ to be the tautological line bundle on $\PP^N$, and define $\Oo_{\PP^N}(1):=\Oo_{\PP^N}(-1)^{\vee}$, $\Oo_{\PP^N}(i):=\Oo_{\PP^N}(1)^{\otimes i}$ for $i \in \ZZ$. Similarly, denote $\Omega_{\PP^N}$ to be the cotangent bundle (sheaf of differentials) of $\PP^N$, and define $\Omega^{i}_{\PP^N}:=\bigwedge^{i}\Omega_{\PP^N}$ for $i \geq 1$.

Finally, for a smooth complex projective variety $X$, if $\Ff, \ \Gg \in \Dd^b(X)$, then we denote $\Ff \boxtimes \Gg = \pi_{1}^{*}(\Ff) \otimes \pi_{2}^*(\Gg)$, where $\pi_{1}, \ \pi_{2}$ are natural projections from $X \times X$ to $X$. 

Then the resolution of $\Delta_{*}\Oo_{\PP^N} $ can be described as follows.

\begin{theorem} \cite{B} \label{Theorem 1}
With the above notations, $\Oo_{\Delta}$ admits a locally free resolutions in $ \Dd^b(\PP^N \times \PP^N)$.
\begin{equation}
0 \rightarrow \bigwedge^{N} (\Oo_{\PP^N}(-1) \boxtimes \Omega_{\PP^N}(1))  \rightarrow  ... \rightarrow \Oo_{\PP^N}(-1) \boxtimes \Omega_{\PP^N}(1) \rightarrow \Oo_{\PP^N \times \PP^N} \rightarrow \Oo_{\Delta} \rightarrow 0.
\end{equation}
\end{theorem}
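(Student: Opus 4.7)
The plan is to realize this resolution as the Koszul complex attached to a canonical section of a rank-$N$ vector bundle on $\PP^N \times \PP^N$ whose vanishing scheme is the diagonal $\Delta$. The starting point is the Euler sequence on $\PP^N$:
\[
0 \to \Oo_{\PP^N}(-1) \to V \otimes \Oo_{\PP^N} \to T_{\PP^N}(-1) \to 0,
\]
with $V = \CC^{N+1}$, whose dual reads $0 \to \Omega_{\PP^N}(1) \to V^{*} \otimes \Oo_{\PP^N} \to \Oo_{\PP^N}(1) \to 0$.

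First I would construct a canonical morphism $s : \Oo_{\PP^N \times \PP^N} \to \pi_{1}^{*}\Oo_{\PP^N}(1) \otimes \pi_{2}^{*}T_{\PP^N}(-1)$. Pulling back the Euler sequence along each projection $\pi_{i}$, one composes the inclusion $\pi_{1}^{*}\Oo_{\PP^N}(-1) \hookrightarrow V \otimes \Oo_{\PP^N \times \PP^N}$ with the surjection $V \otimes \Oo_{\PP^N \times \PP^N} \twoheadrightarrow \pi_{2}^{*}T_{\PP^N}(-1)$ and then twists by $\pi_{1}^{*}\Oo_{\PP^N}(1)$. A fiberwise check shows that $s$ vanishes at $([v],[w])$ exactly when $v$ lies in the line $[w]$, so the set-theoretic zero locus of $s$ is $\Delta$.

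Next I would upgrade this to a scheme-theoretic statement. Both the rank of $\pi_{1}^{*}\Oo_{\PP^N}(1) \otimes \pi_{2}^{*}T_{\PP^N}(-1)$ and the codimension of $\Delta$ in $\PP^N \times \PP^N$ equal $N$; since $\Delta$ is smooth, working in local affine coordinates (say $x_{i} - y_{i}$ around a chosen diagonal point) one sees that the $N$ components of $s$ form a regular sequence cutting out $\Delta$ with its reduced structure. Hence $s$ is a regular section, and the Koszul complex associated to its dual cosection $s^{\vee} : \Oo_{\PP^N}(-1) \boxtimes \Omega_{\PP^N}(1) \to \Oo_{\PP^N \times \PP^N}$ is a locally free resolution of the structure sheaf of the zero scheme, that is, of $\Oo_{\Delta}$.

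Finally I would identify the terms: because $\pi_{1}^{*}\Oo_{\PP^N}(-1)$ is a line bundle, the standard identity for exterior powers of a line-bundle twist gives
\[
\bigwedge^{k}\bigl(\Oo_{\PP^N}(-1) \boxtimes \Omega_{\PP^N}(1)\bigr) \;\cong\; \Oo_{\PP^N}(-k) \boxtimes \Omega_{\PP^N}^{k}(k),
\]
which matches the shape displayed in the statement. The main technical obstacle is the scheme-theoretic regularity of $s$ at a diagonal point; once that is settled, exactness of the Koszul complex is formal, and the resolution in the theorem follows directly.
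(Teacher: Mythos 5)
Your proposal is correct and is essentially the proof the paper relies on: the paper does not prove Theorem \ref{Theorem 1} itself but refers to \cite{Cal} (Theorem III.1.1), where the resolution is obtained exactly as you describe, as the Koszul complex of the canonical regular section of $\Oo_{\PP^N}(1)\boxtimes T_{\PP^N}(-1)$ cutting out the diagonal, with the local check that the components reduce to $x_i-y_i$ on a standard chart. Your identification $\bigwedge^{k}\bigl(\Oo_{\PP^N}(-1)\boxtimes\Omega_{\PP^N}(1)\bigr)\cong\Oo_{\PP^N}(-k)\boxtimes\Omega^{k}_{\PP^N}(k)$ via the line-bundle twist is the same simplification the paper records (there via Lemma \ref{lemma 1}), so no gap remains.
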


\begin{remark} 
For a  proof, we refer to \cite{Cal} Theorem III.1.1 for details.
Note that by Lemma \ref{lemma 1}, we have the isomorphism $\bigwedge^{i} (\Oo_{\PP^N}(-1) \boxtimes \Omega_{\PP^N}(1)) \cong \Oo_{\PP^N}(-i) \boxtimes \Omega^{i}_{\PP^N}(i)$ for all $i \geq 1$. 
\end{remark}

Moreover, we have the following corollary.

\begin{corollary}  [\cite{B}] \label{Corollary 1}
The following collection of sheaves 
\begin{align*}
&\{\Oo_{\PP^N}(-N),...,\Oo_{\PP^N}(-1),\Oo_{\PP^N}\} 
\end{align*} 
is a strong full exceptional collection in $\Dd^b(\PP^N)$.
\end{corollary}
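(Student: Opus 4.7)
The plan is to check two things: strong exceptionality, which is a direct cohomological computation, and fullness, which follows from the Beilinson resolution of Theorem \ref{Theorem 1}.

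For strong exceptionality I would unwind everything in terms of cohomology of line bundles on projective space. Writing the collection as $E_a = \Oo_{\PP^N}(a - N - 1)$ for $a = 1, \dots, N+1$, one has
\[
\Hom_{\Dd^b(\PP^N)}(E_a, E_b[l]) = H^l(\PP^N, \Oo_{\PP^N}(b - a)).
\]
Recall that $H^l(\PP^N, \Oo_{\PP^N}(m))$ is concentrated in degree $0$ for $m \geq 0$, in degree $N$ for $m \leq -N-1$, and vanishes identically for $-N \leq m \leq -1$. For $a > b$ we have $-N \leq b - a \leq -1$, so every such $\Hom$ vanishes; for $a = b$ only $H^0 = \CC$ survives; for $a < b$ we have $b - a > 0$, so only $H^0$ is nonzero. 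This is precisely the definition of a strong exceptional collection.

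For fullness I would first derive a symmetric version of the resolution in Theorem \ref{Theorem 1} by pulling it back along the swap $\sigma \colon \PP^N \times \PP^N \to \PP^N \times \PP^N$, $(x, y) \mapsto (y, x)$. Since $\sigma^* \Oo_\Delta \cong \Oo_\Delta$ and $\sigma^*(\Ff \boxtimes \Gg) \cong \Gg \boxtimes \Ff$, one obtains a resolution
\[
0 \to \Omega^N_{\PP^N}(N) \boxtimes \Oo_{\PP^N}(-N) \to \dots \to \Omega^1_{\PP^N}(1) \boxtimes \Oo_{\PP^N}(-1) \to \Oo_{\PP^N \times \PP^N} \to \Oo_\Delta \to 0.
\]
For any $\Ff \in \Dd^b(\PP^N)$, the identity functor is $\Phi_{\Oo_\Delta}$, so $\Ff \cong \Phi_{\Oo_\Delta}(\Ff)$ is realized as an iterated cone of the objects $\Phi_{\Omega^i_{\PP^N}(i) \boxtimes \Oo_{\PP^N}(-i)}(\Ff)$ for $i = 0, \dots, N$, which by the projection formula simplify to $R\Gamma(\PP^N, \Ff \otimes \Omega^i_{\PP^N}(i)) \otimes_\CC \Oo_{\PP^N}(-i)$. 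This places $\Ff$ in the triangulated hull of $\{\Oo_{\PP^N}(-i) : i = 0, \dots, N\}$ and proves fullness.

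The main subtlety of the plan is to use the symmetric version of the resolution rather than Theorem \ref{Theorem 1} verbatim. Applying $\Phi_{\Oo_\Delta}$ directly with the original resolution would produce the sheaves $\{\Omega^i_{\PP^N}(i)\}$ as generators, and one would then need a separate argument (for instance via iterated Euler sequences) to place those inside $\langle \Oo_{\PP^N}(-N), \dots, \Oo_{\PP^N} \rangle$. Swapping factors aligns the resolution with the exceptional collection at hand and makes fullness immediate; everything else is classical bookkeeping.
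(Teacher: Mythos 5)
Your proof is correct and follows the route the paper intends: strong exceptionality via the standard computation of $H^l(\PP^N,\Oo_{\PP^N}(m))$, and fullness by applying the Fourier--Mukai transform of the identity kernel $\Oo_{\Delta}$ to the Beilinson resolution of Theorem \ref{Theorem 1} (the paper itself defers the details to \cite{B}, \cite{Cal}, \cite{Huy}). Your handling of the one genuine subtlety --- swapping the two factors of the resolution (equivalently, one would otherwise have to show the $\Omega^i_{\PP^N}(i)$ lie in the hull of the $\Oo_{\PP^N}(-j)$) so that the kernel terms produce the twists $\Oo_{\PP^N}(-i)$ under the convention $\Phi_{\Pp}=\pi_{2*}(\pi_1^*(-)\otimes\Pp)$ --- is exactly the right bookkeeping and is done correctly.
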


\begin{remark}
More generally, we can have $\{\Oo_{\PP^N}(a),...,\Oo_{\PP^N}(a+N-1),\Oo_{\PP^N}(a+N)\}$ is a strong full exceptional collection for any $a \in \ZZ$, see \cite{Huy} Corollary 8.29.
\end{remark}

\subsection{The Grassmannians $\GG(k,N)$}

The above result Theorem \ref{Theorem 1}, Corollary \ref{Corollary 1} by Beilinson were generalized by Kapranov \cite{Kap85}, \cite{Kap88} to the Grassmannians and partial flag varieties.

First, we define the Grassmannians 
\[
\GG(k,N)=\{0 \subset V \subset \CC^N \ | \ \tdim_{\CC} V=k\}
\] of $k$-dimensional subspaces in $\CC^N$.

Let $\Dd^b(\GG(k,N))$  be the bounded derived categories of coherent sheaves on the Grassmannian $\GG(k,N)$. Similarly as the case for $\PP^N$, we introduce a strong full exceptional collection for $\Dd^b(\GG(k,N))$.

Denote $\V$ to be the tautological rank $k$ bundle on $\GG(k,N)$ and $\CC^N/\V$ to be the tautological rank $N-k$ quotient bundle. Similarly as Theorem \ref{Theorem 1}, the projective space can be generalized to the Grassmannians case. This means that there is a locally free resolution of the structure sheaf $\Oo_{\Delta}=\Delta_{*}\Oo_{\GG(k,N)}$, where $\Delta:\GG(k,N) \rightarrow \GG(k,N) \times \GG(k,N)$ is the diagonal embedding. The following is due to M. Kapranov.

\begin{theorem} [\cite{Kap85}]\label{Theorem 2}
	With the above notations, $\Oo_{\Delta}$ admits a locally free resolutions in $ \Dd^b(\GG(k,N) \times \GG(k,N))$.
	\begin{equation}
	0 \rightarrow \bigwedge^{k(N-k)} (\V \boxtimes (\CC^N/\V)^{\vee})  \rightarrow ... \rightarrow  \V \boxtimes (\CC^N/\V)^{\vee} \rightarrow \Oo_{\GG(k,N) \times \GG(k,N)} \rightarrow \Oo_{\Delta} \rightarrow 0.
	\end{equation}
\end{theorem}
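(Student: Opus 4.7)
The plan is to realize the claimed resolution as the Koszul complex of a canonical regular section on $\GG(k,N)\times\GG(k,N)$. On $X:=\GG(k,N)\times\GG(k,N)$ with projections $p_{1},p_{2}$, write $\V_{i}:=p_{i}^{*}\V$ and $(\CC^{N}/\V)_{i}:=p_{i}^{*}(\CC^{N}/\V)$, and consider the rank $k(N-k)$ vector bundle
\[
\mathcal{E}:=\Hom(\V_{1},(\CC^{N}/\V)_{2})\cong \V_{1}^{\vee}\otimes (\CC^{N}/\V)_{2}.
\]
A direct computation gives $\mathcal{E}^{\vee}\cong \V\boxtimes(\CC^{N}/\V)^{\vee}$, so that $\bigwedge^{i}\mathcal{E}^{\vee}$ is precisely the $i$-th term of the complex in the statement. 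The canonical section $s\in H^{0}(X,\mathcal{E})$ I would use is the one corresponding to the composition $\V_{1}\hookrightarrow \Oo_{X}^{\oplus N}\twoheadrightarrow (\CC^{N}/\V)_{2}$.

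The next step is to identify the zero scheme $Z(s)$ with the diagonal $\Delta$. Set-theoretically, $s$ vanishes at $(V_{1},V_{2})$ if and only if $V_{1}\subset V_{2}$, which, since $\tdim V_{1}=\tdim V_{2}=k$, forces $V_{1}=V_{2}$. So $Z(s)_{\mathrm{red}}=\Delta$. Both $\Delta$ and $Z(s)$ are stable under the diagonal $\GLL_{N}$-action, so it suffices to verify the scheme-theoretic equality in a single affine chart. Fix a decomposition $\CC^{N}=V\oplus W$ with $\tdim V=k$; in the standard affine neighbourhood of $V$ on each factor, subspaces are parametrized by linear maps $A_{1},A_{2}\in\Hom(V,W)$, and an elementary calculation shows that in these coordinates $s$ is given by $A_{1}-A_{2}\in\Hom(V,W)$. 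Thus $Z(s)$ is cut out locally by $k(N-k)$ coordinate functions, which exhibits it as the smooth diagonal and simultaneously shows $s$ is a regular section.

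With regularity in hand, the Koszul complex of $s$ is a locally free resolution,
\[
0\to\bigwedge^{k(N-k)}\mathcal{E}^{\vee}\to\cdots\to\bigwedge^{2}\mathcal{E}^{\vee}\to\mathcal{E}^{\vee}\to\Oo_{X}\to\Oo_{\Delta}\to 0,
\]
and substituting $\mathcal{E}^{\vee}\cong\V\boxtimes(\CC^{N}/\V)^{\vee}$ yields the asserted resolution. (One may additionally decompose each term via Lemma \ref{lemma 1} to separate the $\s_{\lambda}\V\boxtimes\s_{\lambda^{*}}(\CC^{N}/\V)^{\vee}$ summands, which is the form used to extract Kapranov's exceptional collection.)

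The main obstacle is the scheme-theoretic identification $Z(s)=\Delta$, equivalently the regularity of $s$. A purely dimensional argument gives only $\codim Z(s)\leq\rank\mathcal{E}=k(N-k)=\codim\Delta$, and one still has to rule out embedded structure or non-reduced behaviour; the local computation in a complementary-subspace chart outlined above seems to be the cleanest way, as it turns $s$ into a difference of affine coordinates, making regularity manifest. Everything else (the Koszul resolution, the rewriting $\mathcal{E}^{\vee}\cong\V\boxtimes(\CC^{N}/\V)^{\vee}$, and the ranks) is formal once this point is secured.
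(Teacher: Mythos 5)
Your Koszul-complex argument is correct: the canonical section of $\V_1^{\vee}\otimes(\CC^N/\V)_2$ cuts out the diagonal scheme-theoretically (your chart computation $s=A_1-A_2$ settles the regularity), and the resulting Koszul resolution has exactly the terms $\bigwedge^{i}(\V\boxtimes(\CC^N/\V)^{\vee})$ of the statement. The paper itself offers no proof, merely citing Kapranov, and your argument is essentially the standard one underlying \cite{Kap85}, so there is nothing to flag.
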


\begin{remark}
Note that by Lemma \ref{lemma 1}, we have the isomorphism 
\[
\bigwedge^{i} (\V \boxtimes (\CC^N/\V)^{\vee})  \cong \bigoplus_{|\lambda|=i} \s_{\lambda}\V \boxtimes \s_{\lambda^*}(\CC^N/\V)^{\vee}
\] for all $i \geq 1$. 
\end{remark}

For non-negative integers $a, \ b \geq 0$, we denote by $P(a,b)$ the set of Young diagrams $\lambda$ such that $\lambda_{1} \leq a$ and $\lambda_{b+1} =0$.
Then, we obtain the following  strong full exceptional collection.

\begin{corollary} [\cite{Kap85}]  \label{Corollary 2} 
	The following collection of sheaves 
	\begin{align*}
	&\R_{(k,N-k)}=\{\s_{\lambda}\V\ | \ \lambda \in P(N-k,k) \} 
	\end{align*}
	is a strong full exceptional collection in $\Dd^b(\GG(k,N))$. 
\end{corollary}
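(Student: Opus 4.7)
The proof naturally splits into two independent claims: fullness of the collection $\R_{(k,N-k)}$, and the strong exceptional property. I would establish these in that order.

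For fullness, I would apply Theorem \ref{Theorem 2} via a standard resolution-of-the-diagonal argument. For any $\Ff \in \Dd^b(\GG(k,N))$ the identity functor is the Fourier-Mukai transform with kernel $\Oo_\Delta$, so
$$\Ff \cong \pi_{1*}\bigl(\pi_2^*\Ff \otimes \Oo_\Delta\bigr).$$
Replacing $\Oo_\Delta$ by the resolution of Theorem \ref{Theorem 2} and expanding each exterior power $\bigwedge^i(\V \boxtimes (\CC^N/\V)^\vee)$ via Lemma \ref{lemma 1} as $\bigoplus_{|\lambda|=i} \s_\lambda\V \boxtimes \s_{\lambda^*}(\CC^N/\V)^\vee$, the rank constraints on $\V$ and $(\CC^N/\V)^\vee$ force $\lambda \in P(N-k,k)$. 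By the projection formula each summand becomes
$$\s_\lambda\V \otimes H^*\bigl(\GG(k,N),\, \Ff \otimes \s_{\lambda^*}(\CC^N/\V)^\vee\bigr),$$
a finite direct sum of shifts of $\s_\lambda\V$. Taking iterated cones along the resolution then places $\Ff$ inside the triangulated hull of $\{\s_\lambda\V \mid \lambda \in P(N-k,k)\}$.

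For the strong exceptional property, the key computation is
$$\Ext^l(\s_\lambda\V, \s_\mu\V) = H^l\bigl(\GG(k,N),\, \s_\lambda\V^\vee \otimes \s_\mu\V\bigr), \qquad \lambda, \mu \in P(N-k,k).$$
I would rewrite $\s_\lambda\V^\vee$ as a genuine Schur functor twisted by a negative power of $\det\V$, then decompose the resulting tensor product via the Littlewood--Richardson rule into a sum of irreducible $\GLL(k)$-equivariant bundles $\s_\nu\V \otimes (\det\V)^a$. Each summand's cohomology is then governed by Borel--Weil--Bott: either the bundle is acyclic, or exactly one cohomological degree survives. I would then verify in turn: (i) when $\lambda=\mu$, the only summand surviving BWB is the trivial bundle, contributing $\CC$ in degree zero; (ii) for a suitable total ordering on $P(N-k,k)$ refining Young-diagram containment, every summand that appears when $\lambda > \mu$ has singular weight, hence vanishes; (iii) for arbitrary $\lambda,\mu$, the only possibly nonzero cohomology sits in degree zero, giving strongness.

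The main obstacle I anticipate is (ii), the semi-orthogonality. One must match the combinatorial order chosen on $P(N-k,k)$ with the weights appearing in the Littlewood--Richardson expansion of $\s_{\lambda^\vee}\V \otimes \s_\mu\V$, then verify Bott's singularity criterion uniformly. This is a delicate piece of bookkeeping: the twist by $(\det\V)^{-\lambda_1}$ shifts all weights simultaneously, and one needs the shifted weights to land on walls of the shifted Weyl chamber for $\GLL(k)$ precisely when $\lambda > \mu$. The other two points, once (ii) is in place, are comparatively routine applications of BWB.
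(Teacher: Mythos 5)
The paper does not prove this corollary at all---it is quoted from \cite{Kap85} as a consequence of Theorem \ref{Theorem 2}---so the comparison is with Kapranov's own argument, and your outline reproduces it faithfully: fullness by the resolution of the diagonal plus Lemma \ref{lemma 1}, flat base change and the projection formula (with iterated cones over the stupid filtration of the resolution), and the strong/exceptional property by reducing $\Ext^{\bullet}(\s_{\lambda}\V,\s_{\mu}\V)$ to sheaf cohomology of summands of $\s_{\lambda}\V^{\vee}\otimes\s_{\mu}\V$ and invoking Bott; this is also the computation behind Lemma \ref{lemma 3}, which the paper uses for the dual collection. One correction to your step (ii), where your phrasing would actually break the argument if taken literally: the walls in question are not those of $\GLL(k)$. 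Each summand $\s_{\nu}\V\otimes(\det\V)^{a}$ has $\nu+a$ dominant for $\GLL(k)$, so after the $\GLL(k)$ $\rho$-shift it is never singular; what you must apply is Bott's theorem for the parabolic in $\GLL(N)$ with Levi $\GLL(k)\times\GLL(N-k)$, i.e.\ the dot-action of the $\GLL(N)$ Weyl group on the padded weight $(\nu_{1}+a,\dots,\nu_{k}+a,0,\dots,0)$, where singularity comes from coincidences between the $\V$-block entries and the $N-k$ zero entries after adding $\rho_{\GLL(N)}$. With that reading the bookkeeping does close: since $\lambda,\mu\in P(N-k,k)$, every weight $\gamma$ occurring in $\s_{\lambda}\V^{\vee}\otimes\s_{\mu}\V$ satisfies $-(N-k)\leq\gamma_{k}\leq\dots\leq\gamma_{1}\leq N-k$, any such $\gamma$ with a positive entry is singular (so no higher cohomology ever survives, giving strongness), while a summand with all entries $\leq 0$ occurs only when $\mu\subseteq\lambda$; hence the total order must put larger diagrams first (refine \emph{reverse} inclusion), so that nonzero $\Hom$'s only go from earlier to later objects.
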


\begin{remark}
When $k=1$, it is easy to see that the exceptional collections $\R_{(1,N-1)}$ agree with the exceptional collections for $\PP^{N-1}$ from Corollary \ref{Corollary 1}.
\end{remark}

\section{Shifted $q=0$ affine algebras and its categorical action} \label{Section 3}

In this section, we recall the definitions of shifted $q=0$ affine algebras $\dot{\Uu}_{0.N}(L\SL_{2})$ and their categorical actions. We also recall the result for its categorical action on the bounded derived categories of coherent sheaves on Grassmannians \cite{Hsu}.

\subsection{Shifted $q=0$ affine algebras}

\begin{definition} \label{definition 1}
	Denote by $\dot{\Uu}_{0,N}(L\SL_2)$ the associative $\CC$-algebra generated by
	\begin{equation*}
		\begin{split}
				\{&1_{(k,N-k)}, \ e_{r}1_{(k,N-k)}, \ f_{s}1_{(k,N-k)},\ (\psi^{+})^{\pm 1}1_{(k,N-k)}, \ (\psi^{-})^{\pm 1}1_{(k,N-k)},\ h_{\pm 1}1_{(k,N-k)} \ | \\
				&\ 0 \leq k \leq N, \ -k-1 \leq r \leq 0, \ 0 \leq s \leq N-k+1 \},
		\end{split}
	\end{equation*}
	with the following relations
	\begin{equation} \tag{U01}   \label{U01}
	\begin{split}
		&1_{(k,N-k)}1_{(l,N-l)}=\delta_{k,l}1_{(k,N-k)}, \ e_{r}1_{(k,N-k)}=1_{(k-1,N-k+1)}e_{r}, \   f_{r}1_{(k,N-k)}=1_{(k+1,N-k-1)}f_{r}, \\ &(\psi^{+})^{\pm 1}1_{(k,N-k)}=1_{(k,N-k)}(\psi^{+})^{\pm 1},\
		h_{\pm 1}1_{(k,N-k)}=1_{(k,N-k)}h_{ \pm 1},	
	\end{split}
	\end{equation}
	\begin{equation}\tag{U02}   \label{U02}
	\{(\psi^{+})^{\pm 1}1_{(k,N-k)},(\psi^{-})^{\pm 1}1_{(k,N-k)},h_{\pm 1}1_{(k,N-k)}\ | \ 0 \leq k \leq N \} \ \mathrm{pairwise\ commute},
	\end{equation}
	\begin{equation} \tag{U03} \label{U03}
	(\psi^{+})^{\pm 1} \cdot (\psi^{+})^{\mp 1} 1_{(k,N-k)} = 1_{(k,N-k)}=(\psi^{-})^{\pm 1} \cdot (\psi^{-})^{\mp 1}1_{(k,N-k)},
	\end{equation}
	\begin{equation} \tag{U04} \label{U04}
	e_{r}e_{s}1_{(k,N-k)}=-e_{s+1}e_{r-1}1_{(k,N-k)},
	\end{equation}
	\begin{equation}\tag{U05} \label{U05}
	f_{r}f_{s}1_{(k,N-k)}=-f_{s-1}f_{r+1}1_{(k,N-k)},
	\end{equation}
	\begin{equation} \tag{U06} \label{U06}
	\psi^{\pm}e_{r}1_{(k,N-k)}=
	-e_{r+1}\psi^{\pm}1_{(k,N-k)},
	\end{equation}
	\begin{equation} \tag{U07} \label{U07}
	\psi^{\pm}f_{r}1_{(k,N-k)}=
	-f_{r-1}\psi^{\pm}1_{(k,N-k)},
	\end{equation}
	\begin{align*} \tag{U08} \label{U08} 
	&[h_{\pm 1},e_{r}]1_{(k,N-k)}=0 
	&[h_{\pm 1},f_{r}]1_{(k,N-k)}=0,
	\end{align*}
	
	\begin{equation} \tag{U09} \label{U09} 
	 [e_{r},f_{s}]1_{(k,N-k)}= \begin{cases}
	\psi^{+}h_{1}1_{(k,N-k)} & \text{if} \  r+s=N-k+1 \\
	\psi^{+}1_{(k,N-k)} & \text{if} \ r+s=N-k \\
	0 & \text{if} \  -k+1 \leq r+s \leq N-k-1 \\
	-\psi^{-}1_{(k,N-k)} & \text{if} \ r+s=-k \\
	-\psi^{-}h_{-1}1_{(k,N-k)} & \text{if} \ r+s=-k-1
	\end{cases},
	\end{equation}
	for any  $r,s$ such that the above relations make sense.
\end{definition}

\subsection{The categorical action}

In this section, we recall the definition of the categorical action of $\dot{\Uu}_{0,N}(L\SL_2)$.

\begin{definition}  \label{definition 2}
	A categorical $\dot{\Uu}_{0,N}(L\SL_2)$ action consists of a target 2-category $\Kk$, which is triangulated, $\CC$-linear and idempotent complete. The objects in $\Kk$ are
	\[
	\mathrm{Ob}(\Kk)=\{\Kk(k,N-k)\ |\ 0 \leq k \leq N \}
	\] where each $\Kk(k,N-k)$ is also a triangulated category, and each Hom space $\mathrm{Hom}(\Kk(k,N-k),\Kk(l,N-l))$ is also triangulated. 
	
	The morphisms are given by 
	\begin{enumerate}
		\item 1-morphisms:
		$\bo_{(k,N-k)}$, ${\E}_{r}\bo_{(k,N-k)}=\bo_{(k-1,N-k+1)}{\E}_{r}$, ${\F}_{s}\bo_{(k,N-k)}=\bo_{(k+1,N-k-1)}{\F}_{s}$, $({\spi}^{\pm})^{\pm 1}\bo_{(k,N-k)}=\bo_{(k,N-k)}({\spi}^{\pm})^{\pm 1}$, ${\Hhh}_{\pm1}\bo_{(k,N-k)}=\bo_{(k,N-k)}{\Hhh}_{\pm 1}$,  where $-k-1 \leq r \leq 0$, $0 \leq s \leq N-k+1$. Here $\bo_{(k,N-k)}$ is the identity functor of $\Kk(k,N-k)$.
	\end{enumerate}
	
	Subject to the following relations.
	
	\begin{enumerate}
		\item The space of maps between any two 1-morphisms is finite dimensional.
		\item ${\Hhh}_{\pm1}\bo_{(k,N-k)}$ are adjoint to each other, i.e. $({\Hhh}_{1}\bo_{(k,N-k)})^{L} \cong \bo_{(k,N-k)}{\Hhh}_{-1}  \cong ({\Hhh}_{1}\bo_{(k,N-k)})^{R}$.
		\item The left and right adjoints of $\E_{r}$ and $\F_{s}$ are given by conjugation of $\spi^{\pm}$ up to homological shifts. More precisely,
		\begin{enumerate}
			\item $({\E}_{r}\bo_{(k,N-k)})^{R} \cong \bo_{(k,N-k)}{({\spi}^{+})^{r+1}}{\F}_{i,N-k+2}({\spi}^{+})^{-r-2}[-r-1]$
			\item $({\E}_{r}\bo_{(k,N-k)})^{L} \cong \bo_{(k,N-k)}({\spi}^{-})^{r+k-1}{\F}({\spi}^{-})^{-r-k}[r+k]$
			\item $({\F}_{s}\bo_{(k,N-k)})^{R} \cong \bo_{(k,N-k)}({\spi}^{-})^{-s+1}{\E}_{-k-2}({\spi}^{-})^{s-2}[s-1]$ 
			\item $({\F}_{s}\bo_{(k,N-k)})^{L} \cong \bo_{(k,N-k)}({\spi}^{+})^{-s+N-k-1}{\E}({\spi}^{+})^{s-N+k}[-s+N-k]$.
		\end{enumerate}
		\item $ ({\spi}^{\pm})^{\pm 1} ({\spi}^{\pm})^{\pm 1} \bo_{(k,N-k)} \cong ({\spi}^{\pm})^{\pm 1}({\spi}^{\pm})^{\pm 1} \bo_{(k,N-k)}$.
		\item ${\Hhh}_{\pm 1} {\Hhh}_{\pm 1} \bo_{(k,N-k)}  \cong  {\Hhh}_{\pm 1} {\Hhh}_{\pm 1} \bo_{(k,N-k)}$.
		\item ${\Hhh}_{\pm 1} {\spi}^{\pm} \bo_{(k,N-k)} \cong {\spi}^{\pm} {\Hhh}_{\pm 1} \bo_{(k,N-k)}$.
		\item The relations between ${\E}_{r}$, ${\E}_{s}$ are given by the following
			\[
			{\E}_{r+1}{\E}_{s}\bo_{(k,N-k)} \cong \begin{cases}
			{\E}_{s+1}{\E}_{r}\bo_{(k,N-k)}[-1] & \text{if} \ r-s \geq 1 \\
			0 & \text{if} \ r=s \\
			{\E}_{s+1}{\E}_{r}\bo_{(k,N-k)}[1] & \text{if} \ r-s \leq -1. 
			\end{cases}
			\]
		\item The relations between ${\F}_{r}$, ${\F}_{s}$ are given by the following
			We have
			\[
			{\F}_{r}{\F}_{s+1}\bo_{(k,N-k)} \cong \begin{cases}
			{\F}_{s}{\F}_{r+1}\bo_{(k,N-k)}[1] & \text{if} \ r-s \geq 1 \\
			0 & \text{if} \ r=s \\
			{\F}_{s}{\F}_{r+1}\bo_{(k,N-k)}[-1] & \text{if} \ r-s \leq -1. 
			\end{cases}  
			\]

		\item The relations between ${\E}_{r}$, $\Psi^{\pm}$ are given by the following. We have 
			\[
			{\spi}^{\pm}{\E}_{r}\bo_{(k,N-k)} \cong {\E}_{r+1}{\spi}^{\pm}\bo_{(k,N-k)}[\mp 1].
			\]

		\item The relations between ${\F}_{r}$, $\Psi^{\pm}$ are given by the following. We have 
			\[
			{\spi}^{\pm}{\F}_{r}\bo_{(k,N-k)} \cong {\F}_{r-1}{\spi}^{\pm}\bo_{(k,N-k)}[\pm 1].
			\]

		\item The relations between ${\E}_{r}$, $\Hhh_{\pm 1}$ are given by the following. They are related by the following exact triangles
			\[
			{\Hhh}_{1}{\E}_{r}\bo_{(k,N-k)} \rightarrow {\E}_{r}{\Hhh}_{1}\bo_{(k,N-k)} \rightarrow ({\E}_{r+1}\bigoplus {\E}_{r+1}[1])\bo_{(k,N-k)}, 
			\]
			\[
			{\E}_{r}{\Hhh}_{-1}\bo_{(k,N-k)} \rightarrow {\Hhh}_{-1}{\E}_{r}\bo_{(k,N-k)} \rightarrow   ({\E}_{r-1}\bigoplus {\E}_{r-1}[1])\bo_{(k,N-k)}.
			\]
		
		\item The relations between ${\F}_{r}$, $\Hhh_{\pm 1}$ are given by the following. They are related by the following exact triangles
			\[
			{\F}_{r}{\Hhh}_{1}\bo_{(k,N-k)} \rightarrow {\Hhh}_{1}{\F}_{r}\bo_{(k,N-k)} \rightarrow  ({\F}_{r+1}\bigoplus {\F}_{r+1}[1])\bo_{(k,N-k)},
			\]
			\[
			{\Hhh}_{-1}{\F}_{r}\bo_{(k,N-k)} \rightarrow {\F}_{r}{\Hhh}_{-1}\bo_{(k,N-k)} \rightarrow ({\F}_{r-1}\bigoplus {\F}_{r-1}[1])\bo_{(k,N-k)} .
			\]
			
		\item For ${\E}_{r}{\F}_{s}\bo_{(k,N-k)}, {\F}_{s}{\E}_{r}\bo_{(k,N-k)} \in \mathrm{Hom}(\Kk((k,N-k)),\Kk((k,N-k)))$, they are related by  exact triangles, more precisely, 
		\begin{enumerate}
			\item
			\[
			{\F}_{s}{\E}_{r}{\bo}_{(k,N-k)} \rightarrow {\E}_{r}{\F}_{s}{\bo}_{(k,N-k)} \rightarrow {\spi}^{+}{\Hhh}_{1}{\bo}_{(k,N-k)} \ \text{if} \ r+s=N-k+1,
			\]
			\item \[
			{\F}_{s}{\E}_{r}\bo_{(k,N-k)} \rightarrow {\E}_{r}{\F}_{s}\bo_{(k,N-k)} \rightarrow {\spi}^{+}\bo_{(k,N-k)} 
			\ \text{if} \  
			r+s =N-k,
			\] 
			\item \[
			{\E}_{r}{\F}_{s}\bo_{(k,N-k)} \rightarrow {\F}_{s}{\E}_{r}\bo_{(k,N-k)} \rightarrow {\spi}^{-}\bo_{(k,N-k)}   \ \text{if} \  r+s = -k,
			\] 
			\item \[
			{\E}_{r}{\F}_{s}\bo_{(k,N-k)} \rightarrow {\F}_{s}{\E}_{r}\bo_{(k,N-k)} \rightarrow {\spi}^{-}{\Hhh_{-1}}\bo_{(k,N-k)}  \ \text{if} \  r+s = -k-1 ,
			\] 
			\item \[
			{\F}_{s}{\E}_{r}\bo_{(k,N-k)} \cong {\E}_{r}{\F}_{s}\bo_{(k,N-k)} \ \text{if} \ -k+1 \leq r+s \leq N-k-1 .
			\] 
		\end{enumerate}

	\end{enumerate}
	
\end{definition}

\subsection{Geometric example}

In this section, we recall the main theorem (Theorem 5.2) in \cite{Hsu}, which says that there is a categorical action of $\dot{\Uu}_{0,N}(L\SL_2)$ on the bounded derived categories of coherent sheaves on Grassmannians.

We denote $\Dd^b(\GG(k,N))$ to be the bounded derived categories of coherent sheaves on $\GG(k,N)$. Those would be the objects $\Kk(k,N-k)$ of the triangulated $2$-category $\Kk$ in Definition \ref{definition 2}. On $\GG(k,N)$ we denote $\V$ to be the tautological bundle of rank $k$.

To define those $1$-morphisms  ${\E}_{r}\bo_{(k,N-k)}$, ${\F}_{s}\bo_{(k,N-k)}$, ${\Hhh}_{\pm 1}\bo_{(k,N-k)}$, $({\spi}^{\pm})^{\pm 1}\bo_{(k,N-k)}$, we use the language of FM transforms, that means we would define them by using FM kernels. So we have to introduce more geometries.

We define correspondences via 3-steps partial flag variety $Fl(k-1,k) \subset \GG(k,N) \times \GG(k-1,N)$ by
\[
Fl(k-1,k):=\{0 \overset{k-1}{\subset} V' \overset{1}{\subset} V \overset{N-k}{\subset} \CC^N \}.
\]
Then we have the natural line bundle $\V/\V'$ on $Fl(k-1,k)$.

Next, we introduce new varieties
\begin{equation*}
\begin{split}
X(k):=\{& (V''',V,V',V') \in \GG(k-1,N) \times \GG(k,N) \times \GG(k,N) \times \GG(k+1,N) \ | \ \\
& V'''\subset V \subset V', \
V'''\subset V'' \subset V' \}.
\end{split}
\end{equation*}

On $X(k)$, we have the divisor $D(k) \subset X(k)$ that defined by
\begin{equation*}
\begin{split}
D(k):=\{& (V''',V,V'',V') \in \GG(k-1,N) \times \GG(k,N) \times \GG(k,N) \times \GG(k+1,N)  \ | \ \\
& V'''\subset V=V'' \subset V' \}
\end{split}
\end{equation*} 
which is cut out by the natural section of the line bundles $\Hh om(\V''/\V''',\V'/\V)$ or $\Hh om(\V/\V''',\V'/\V'')$. More precisely, we have $\Oo_{X(k)}(-D(k)) \cong \V''/\V''' \otimes (\V'/\V)^{-1}\cong \V/\V''' \otimes (\V'/\V'')^{-1}$.

Let $p(k):X(k) \rightarrow Y(k)$ be the projection by forgetting $V'''$ and $V'$. Here 
\begin{equation*}
Y(k)=\{(V,V'') \in  \GG(k,N) \times \GG(k,N) \ | \
\dim V \cap V'' \geq k-1\}.
\end{equation*}

Let $t(k):Y(k) \rightarrow \GG(k,N) \times \GG(k,N)$ be the inclusion and $\Delta(k):\GG(k,N) \rightarrow \GG(k,N) \times \GG(k,N)$  the diagonal map, and $\iota(k):Fl(k-1,k) \hookrightarrow \GG(k,N) \times \GG(k-1,N)$ be the inclusion. Then we define those 1-morphisms via using the above geometries.

\begin{definition} \label{definition 3}
	We define  $\bo_{(k,N-k)}$, ${\E}_{r}\bo_{(k,N-k)}$, $\bo_{(k,N-k)}{\F}_{s}$, ${\Hhh}_{\pm 1}\bo_{(k,N-k)}$, $({\spi}^{\pm})^{\pm 1}\bo_{(k,N-k)}$ to be FM transforms with the corresponding kernels
	\begin{equation*}
	\begin{split}
	\bo_{(k,N-k)}&:= \Delta(k)_{*}\Oo_{\GG(k,N)} \in \Dd^b(\GG(k,N)\times \GG(k,N)), \\
	\Ee_{r}\bo_{(k,N-k)}&:=\iota(k)_{*} (\V/\V')^{r} \in \Dd^b(\GG(k,N)\times \GG(k-1,N)), \\
	\bo_{(k,N-k)}\Ff_{r}&:=\iota(k)_{*} (\V'/\V)^{r} \in \Dd^b(\GG(k-1,N)\times \GG(k,N)), \\
	(\Psi^{+})^{\pm 1}\bo_{(k,N-k)}&:=\Delta(k)_{*}\tdet (\CC^N/\V)^{\pm 1}[\pm(1+k-N)] \in \Dd^b(\GG(k,N) \times \GG(k,N)), \\
	(\Psi^{-})^{\pm 1}\bo_{(k,N-k)}&:=\Delta(k)_{*} \tdet (\V)^{\mp 1}[\pm(1-k)] \in \Dd^b(\GG(k,N)\times \GG(k,N)), \\
	\Hh_{1}\bo_{(k,N-k)}&:=(\Psi^{+}\bo_{(k,N-k)})^{-1} \ast [t(k)_{*}p(k)_{*}(\Oo_{2D(k)} \otimes (\V'/\V)^{N-k+1})] \in \Dd^b(\GG(k,N) \times \GG(k,N)), \\
	\Hh_{-1}\bo_{(k,N-k)}&:=(\Psi^{-}\bo_{(k,N-k)})^{-1} \ast [t(k)_{*}p(k)_{*}(\Oo_{2D(k)} \otimes (\V/\V''')^{-k-1})] \in \Dd^b(\GG(k,N)\times \GG(k,N)).
	\end{split}
	\end{equation*}
\end{definition}

Then we have the following theorem.

\begin{theorem} [$\SL_{2}$ version of Theorem 5.2 \cite{Hsu}] \label{Theorem 3}
	Let $\Kk$ be the triangulated 2-categories whose nonzero objects are $\Kk(k,N-k)=\Dd^b(\GG(k,N))$, the 1-morphisms are kernels defined in Definition \ref{definition 3} and the 2-morphisms are maps between kernels.  Then this gives a categorical $\dot{\Uu}_{0,N}(L\SL_2)$ action.
\end{theorem}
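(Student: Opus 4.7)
The plan is to unpack the statement into a finite list of identities and exact triangles between Fourier-Mukai kernels on products $\GG(k,N) \times \GG(l,N)$, then verify each one using the convolution formula of Proposition \ref{Proposition 2}, the adjunction formulas of Proposition \ref{Proposition 1}, and base change/projection formula on the correspondence varieties $Fl(k-1,k)$, $X(k)$, $Y(k)$. Every relation in Definition \ref{definition 2} is of the form "composition of two 1-morphisms is isomorphic (or fits in an exact triangle) with a third composition", and since all our 1-morphisms are FM transforms with explicit kernels, it suffices to establish the corresponding kernel-level statements in the appropriate bounded derived category of coherent sheaves.

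First I would dispose of the easy relations. The adjunctions for ${\Hhh}_{\pm 1}$, the commutativity of the diagonally supported kernels $({\spi}^{\pm})^{\pm 1}$ and ${\Hhh}_{\pm 1}$, and their mutual commutativity are immediate from Grothendieck duality on the diagonal together with the fact that $\tdet(\V)$ and $\tdet(\CC^N/\V)$ are line bundles on $\GG(k,N)$. The adjunctions for ${\E}_r$ and ${\F}_s$ claimed in item (3) of Definition \ref{definition 2} follow from Proposition \ref{Proposition 1} applied to $\iota(k)_{*}(\V/\V')^{r}$, once one computes the relative dualizing sheaves of the two projections $Fl(k-1,k) \to \GG(k,N)$ and $Fl(k-1,k) \to \GG(k-1,N)$; these are standard twists by $\V/\V'$ and by determinant line bundles of $\V$, $\CC^N/\V$, and they match exactly the conjugation by $({\spi}^{\pm})^{\bullet}$ with the homological shifts appearing in (a)–(d). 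The shift-type relations (7)–(10) between ${\E}_r, {\E}_s$, between ${\F}_r, {\F}_s$, and between ${\E}_r, {\F}_r$ and ${\spi}^{\pm}$ reduce after convolution to pushforwards of line bundles $(\V/\V')^{r}(\V''/\V')^{s}$ from the two-step flag variety $Fl(k-2,k-1,k)$ (respectively $Fl(k-1,k,k+1)$) to $\GG(k-2,N) \times \GG(k,N)$; this is governed by the Koszul resolution of the diagonal in the relevant $\PP^{1}$-bundle and by Serre duality, so the three-way case division in (7), (8) corresponds precisely to whether the relative Serre duality pairing produces a vanishing, a degree shift, or an isomorphism.

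The main obstacle, and the heart of the proof, is item (13): the ${\E}_r {\F}_s$ versus ${\F}_s {\E}_r$ exact triangles with case division governed by $r+s$ relative to $N-k$ and $-k$. I would compute both convolutions $\Ee_r \ast \bo_{(k,N-k)} \Ff_s$ and $\bo_{(k,N-k)}\Ff_s \ast \Ee_r$ as pushforwards from the fiber products of the two flag correspondences over $\GG(k,N)$ and $\GG(k-1,N)$ respectively. These fiber products are not transverse: their set-theoretic images in $\GG(k,N) \times \GG(k,N)$ land in $Y(k)$, and they split into a transverse part supported on the open stratum where $\dim V \cap V'' = k-1$ and an excess part supported on the diagonal $\Delta(k) \subset Y(k)$. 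Applying the excess intersection formula together with the Koszul resolution of the diagonal, and then twisting by $(\V/\V')^{r}$ and $(\V'/\V'')^{s}$, produces a canonical distinguished triangle relating the two convolutions with a cone supported on $\Delta(k)$. Identifying that cone with the appropriate kernel from Definition \ref{definition 3} requires matching the divisor class $\Oo_{X(k)}(-D(k)) \cong (\V''/\V''') \otimes (\V'/\V)^{-1}$ against the determinant twists defining $({\spi}^{\pm})^{\pm 1}\bo_{(k,N-k)}$; the boundary cases $r+s \in \{-k-1,-k,N-k,N-k+1\}$ are exactly the values where the first thickening $\Oo_{2D(k)}$ contributes an extra ${\Hhh}_{\pm 1}$ factor, while the range $-k+1 \leq r+s \leq N-k-1$ is exactly where the excess cone vanishes and one obtains a clean isomorphism. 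Finally, items (11), (12) relating ${\Hhh}_{\pm 1}$ with ${\E}_r, {\F}_r$ fall out as corollaries of the same excess intersection analysis on $X(k)$ applied to the kernels $t(k)_{*}p(k)_{*}(\Oo_{2D(k)} \otimes (\V'/\V)^{N-k+1})$ and $t(k)_{*}p(k)_{*}(\Oo_{2D(k)} \otimes (\V/\V''')^{-k-1})$ defining $\Hh_{\pm 1}\bo_{(k,N-k)}$.
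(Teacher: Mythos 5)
First, a point of comparison: the paper does not actually prove this statement. Theorem \ref{Theorem 3} is imported from the author's earlier work (Theorem 5.2 of \cite{Hsu}); the present article only records the kernels of Definition \ref{definition 3} and cites that paper. So your proposal can only be measured against the cited proof, and at the level of strategy it does resemble it: there too one verifies the relations of Definition \ref{definition 2} kernel by kernel, using Propositions \ref{Proposition 1} and \ref{Proposition 2}, Grothendieck duality and base change on the correspondences, the $\PP^1$-bundle $Fl(k-2,k-1,k)\to Fl(k-2,k)$ for the ${\E}{\E}$ and ${\F}{\F}$ relations, and the varieties $X(k)$, $D(k)$, $Y(k)$ for the commutator relations. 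Your treatment of the easy relations and of the adjunctions in item (3) is the right kind of computation.

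The genuine gap is in your handling of item (13), which is the heart of the theorem. You claim the fiber products computing the two convolutions ``are not transverse'' and ``split into a transverse part supported on the open stratum \ldots and an excess part supported on the diagonal,'' and you invoke the excess intersection formula to produce the comparison triangle. That is not what happens: each fiber product (the space $\{V\subset W\supset V''\}$ over the larger Grassmannian, respectively $\{V\supset V'''\subset V''\}$ over the smaller one) is a smooth, irreducible variety of the expected dimension, the relevant intersections are Tor-independent, and each convolution is therefore an honest pushforward of a single line bundle --- nothing splits and no excess intersection enters in forming them. The actual difficulty, and the reason $X(k)$, $D(k)$ and the thickening $\Oo_{2D(k)}$ appear in Definition \ref{definition 3}, is the comparison of the two pushforwards: both correspondences resolve the singular variety $Y(k)$, they are compared on the common space $X(k)$ via exact sequences built from $\Oo_{X(k)}(-D(k))\cong \V''/\V'''\otimes(\V'/\V)^{-1}$ and $\Oo_{2D(k)}$, and one must match line-bundle twists and homological shifts precisely to see why the cone is $\Psi^{\pm}$ in the cases $r+s=N-k,-k$, is $\Psi^{\pm}\Hh_{\pm1}$ exactly at $r+s=N-k+1,-k-1$, and vanishes for $-k+1\le r+s\le N-k-1$. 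Your sketch asserts this matching rather than performing it, yet this bookkeeping is exactly where the shifted $q=0$ relations (as opposed to the familiar categorical $\SL_2$ relations) are decided. Similarly, relations (11)--(12), which require convolving the genuinely new kernels $\Hh_{\pm1}$ (built from $\Oo_{2D(k)}$) with $\Ee_r$ and $\Ff_r$, do not simply ``fall out as corollaries'' of the same analysis; in the cited proof they demand their own computations. So the plan names the right tools, but the decisive verifications are missing and the geometric mechanism you give for the key step is not the correct one.
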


\section{t-structures via  exceptional collections} \label{Section 4}

In this section, we recall the definition and some basic properties of t-structures.
We also give the tools for dealing with t-structures that would be used in the next section. Then we mention a result of R. Bezrukavnikov for how  to construct a t-structure from an exceptional collections. Finally, we apply this to the Kapranov exceptional collection constructed in Section \ref{Section 2}.

\subsection{t-structures and related tools}
Let us recall the definition of t-structures. Fix $\Dd$ be a ($\CC$-linear) triangulated category.

\begin{definition} \label{Definition 1} 
	A t-structure on $\Dd$ is a pair ($\Dd^{\leq 0}$, $\Dd^{\geq 0}$) of two full subcategories of $\Dd$ such that 
	\begin{enumerate}
		\item $\Dd^{\geq 1} \subseteq \Dd^{\geq 0}$, $\Dd^{\leq -1} \subseteq \Dd^{\leq 0}$, where we define $\Dd^{\leq n} := \Dd^{\leq 0}[-n]$ and $\Dd^{\geq n} := \Dd^{\geq 0}[-n]$. 
		\item $\Hom_{\Dd}(X,Y)=0$ for all $X \in \text{Ob}(\Dd^{\leq 0})$ and $Y \in \text{Ob}(\Dd^{\geq 1})$.
		\item For all $X \in \text{Ob}(\Dd)$, there is an exact triangle 
		$Y \rightarrow X \rightarrow Z \rightarrow Y[1]$ where $Y \in \text{Ob}(\Dd^{\leq 0})$ and $Z \in \text{Ob}(\Dd^{\geq 1})$.
	\end{enumerate}
\end{definition}

A t-structure on $\Dd$ gives rise to an abelian category.

\begin{definition}
	Given a t-structure ($\Dd^{\leq 0}$, $\Dd^{\geq 0}$) on $\Dd$, the heart (core) is the intersection $\Dd^{\ensuremath\heartsuit}:=\Dd^{\leq 0} \cap \Dd^{\geq 0}$.
\end{definition}

Let $\iota^{\leq n} : \Dd^{\leq n} \rightarrow \Dd$ and $\iota^{\geq n} : \Dd^{\geq n} \rightarrow \Dd$ be the inclusion functors. Then $\iota^{\leq n}$ has a right adjoint $\tau^{\leq n} :\Dd \rightarrow \Dd^{\leq n}$. Similarly, $\iota^{\geq n}$ has a left adjoint $\tau^{\geq n} :\Dd \rightarrow \Dd^{\geq n}$. Then we have the cohomological functor 
\[ 
H^{n} =  \tau^{\leq n} \circ \tau^{\geq n} = \tau^{\geq n} \circ \tau^{\leq n} : \Dd \rightarrow \Dd^{\heartsuit}.
\]

We give the definitions for the notions of weakly generate and conservative. 

\begin{definition}
Let $\Cc$ and $\Dd$ be triangulated categories. We say that a collection of objects $\{X_{s} :  s \in S \}$ weakly generate $\Cc$ if $\Hom(X_{s}[n],Y)=0$ for all $s \in S$ and $n \in \ZZ$ implies $Y=0$. A functor $\Phi:\Cc \rightarrow \Dd$ is called conservative if $\Phi(X)=0$ implies $X=0$.
\end{definition}

The following result will be the main tool for us to  construct t-structures in the following section.

\begin{theorem}[\cite{Po}, Theorem 2.1.2,] \label{Theorem 8} 
	Let $\Phi:\widetilde{\Cc} \rightarrow \widetilde{\Dd}$ be a functor of cocomplete triangulated categories which commute with all small coproducts and that admits a left adjoint functor $\Phi^{L}:\widetilde{\Dd} \rightarrow \widetilde{\Cc}$. Let $\Cc \subset \widetilde{\Cc}$ and $\Dd \subset \widetilde{\Dd}$ be full subcategories such that $\Phi(\Cc) \subset \Dd$, $\Phi^{L}(\Dd) \subset \Cc$ and $\Phi(X) \in \Dd \Rightarrow X \in \Cc$ for any $X \in \text{Ob}(\widetilde{\Cc})$.
	
	Assume that $\Dd$ has a t-structure $(\Dd^{\leq 0}, \Dd^{\geq 0})$ such that $\Phi \circ \Phi^{L} : \Dd \rightarrow \Dd$ is right t-exact. Then there exists a unique t-structure on $\Cc$ with 
	\[
	\Cc^{\geq 0} = \{X \in \text{Ob}(\Cc) \ | \ \Phi(X) \in \text{Ob}(\Dd^{\geq 0}) \}.
	\] 
	Moreover, the functor $\Phi$ is t-exact with respect to this t-structure.
\end{theorem}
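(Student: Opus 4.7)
The plan is to define $\Cc^{\leq -1}$ as the full subcategory of $\Cc$ left-orthogonal to $\Cc^{\geq 0}$ and verify the three axioms of a t-structure; uniqueness given $\Cc^{\geq 0}$ is then automatic. Axiom (1) (stability under shifts) holds because $\Phi$ commutes with $[1]$ and $\Dd^{\geq 0}$ is stable under $[-1]$; Axiom (2) (orthogonality) is built into the definition of the left orthogonal.

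The main content is Axiom (3), the existence of a truncation triangle for each $X\in\Cc$. I mimic the truncation on $\Dd$ and transport it through $\Phi^L$. Take the $\Dd$-truncation triangle $\tau^{\leq -1}\Phi(X)\to\Phi(X)\to\tau^{\geq 0}\Phi(X)\to$ and let $f:\Phi^L\tau^{\leq -1}\Phi(X)\to X$ be the morphism corresponding to its first map under the adjunction $(\Phi^L,\Phi)$; the source lies in $\Cc$ by the hypothesis $\Phi^L(\Dd)\subset\Cc$. Complete $f$ to a distinguished triangle
\[
\Phi^L\tau^{\leq -1}\Phi(X)\xrightarrow{f} X\to Z_1\to.
\]
The hypothesis $\Phi(Y)\in\Dd\Rightarrow Y\in\Cc$ together with $\Phi$ being triangulated makes $\Cc$ closed under cones, so $Z_1\in\Cc$. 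A one-line adjunction check shows $\Phi^L\tau^{\leq -1}\Phi(X)\in\Cc^{\leq -1}$: for $Y\in\Cc^{\geq 0}$,
\[
\Hom_\Cc(\Phi^L\tau^{\leq -1}\Phi(X),Y)=\Hom_\Dd(\tau^{\leq -1}\Phi(X),\Phi(Y))=0.
\]

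The main obstacle is to arrange the cone to lie in $\Cc^{\geq 0}$, i.e.\ $\Phi(Z)\in\Dd^{\geq 0}$. Applying $\Phi$ to the triangle yields $\Phi\Phi^L\tau^{\leq -1}\Phi(X)\to\Phi(X)\to\Phi(Z_1)$; right t-exactness of $\Phi\Phi^L$ puts the source in $\Dd^{\leq -1}$, and the triangle identity for the adjunction identifies the first map with $\Phi(f)$, whose composition with the unit $\eta:\tau^{\leq -1}\Phi(X)\to\Phi\Phi^L\tau^{\leq -1}\Phi(X)$ equals the $\Dd$-truncation. In particular $H^{-1}(\Phi(f))$ is split surjective, which via a long exact sequence gives $H^{-1}(\Phi(Z_1))=0$. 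One step does not, however, kill $H^{-i}(\Phi(Z_1))$ for $i\geq 2$, so one iterates: applying the same procedure to $Z_1$ (using $\tau^{\leq -1}\Phi(Z_1)=\tau^{\leq -2}\Phi(Z_1)\in\Dd^{\leq -2}$ and right t-exactness again) produces $Z_2$ with $H^{-2}(\Phi(Z_2))=0$, and so on. For $X$ with $\Phi(X)$ bounded below --- the setting of the application, where the weight categories are bounded derived categories of coherent sheaves on Grassmannians --- finitely many iterations yield a triangle $W\to X\to Z$ with $W$ an iterated extension of objects of the form $\Phi^L(A)$, $A\in\Dd^{\leq -1}$ (hence $W\in\Cc^{\leq -1}$), and $Z\in\Cc^{\geq 0}$. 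In the general cocomplete setting the cocompleteness of $\widetilde{\Cc}$ and the commutation of $\Phi$ with small coproducts allow the tower $X\to Z_1\to Z_2\to\cdots$ to be assembled into a homotopy colimit yielding the required $Z$. Finally, the t-exactness of $\Phi$ follows: $\Phi(\Cc^{\geq 0})\subset\Dd^{\geq 0}$ is tautological, and for $Y\in\Cc^{\leq -1}$ uniqueness of truncation triangles forces the $Z$ above to vanish, whence $\tau^{\geq 0}\Phi(Y)=0$ (by construction $\Phi(Z)=\tau^{\geq 0}\Phi(Y)$), i.e.\ $\Phi(Y)\in\Dd^{\leq -1}$.
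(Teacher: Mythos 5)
The paper gives no proof of this statement; it is imported verbatim from Polishchuk \cite{Po}, so your attempt can only be compared with the standard argument there, which constructs the aisle of $\widetilde{\Cc}$ generated by $\Phi^{L}$ of the aisle of $\Dd$ (under extensions, positive shifts and coproducts) and detects the coaisle through $\Phi$ via the adjunction. Your outline is in the same spirit, and the easy parts are fine (the orthogonality and shift axioms, the adjunction computation showing $\Phi^{L}(\Dd^{\leq -1})\subset{}^{\perp}\Cc^{\geq 0}$, and the one-step vanishing $H^{-1}(\Phi Z_{1})=0$). But the two places where the theorem actually lives are not established. First, the claim that finitely many iterations suffice when $\Phi(X)$ is bounded below is unjustified: right t-exactness of $\Phi\Phi^{L}$ bounds $\Phi\Phi^{L}(A_{n})$ only from above, and your own long exact sequence gives $H^{i}(\Phi Z_{n+1})\cong\ker H^{i+1}(\Phi f_{n})\subset H^{i+1}(\Phi\Phi^{L}A_{n})$ for $i\leq -(n+2)$, so each step can create new cohomology below degree $-(n+1)$. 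The window of possibly nonzero negative cohomology of $\Phi(Z_{n})$ slides downward and need never empty after finitely many steps, even when all objects involved are bounded (even full t-exactness of $\Phi\Phi^{L}$ only keeps the window of fixed width while it descends). So the construction of the truncation triangle rests entirely on the colimit step.

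Second, the homotopy colimit step --- which is exactly where the hypotheses ``cocomplete'' and ``$\Phi$ commutes with coproducts'' enter, i.e.\ the real content of the theorem --- is asserted, not proved, and it hides the delicate points. The t-structure exists only on $\Dd\subset\widetilde{\Dd}$, so you cannot truncate or take cohomology of $\mathrm{hocolim}\,\Phi(Z_{n})$ in $\widetilde{\Dd}$; $\Dd$ (in the application, a bounded derived category of coherent sheaves) is not closed under homotopy colimits in $\widetilde{\Dd}$, so you must first show $\Phi(Z)\in\Dd$ (and likewise for the fibre $W$) before the detection hypothesis $\Phi(Z)\in\Dd\Rightarrow Z\in\Cc$, or even the assertion $\Phi(Z)\in\Dd^{\geq 0}$, makes sense; and the identification $\Phi(Z)\cong\tau^{\geq 0}\Phi(X)$, which you invoke at the end to get left t-exactness of $\Phi$, requires controlling maps into a homotopy colimit (a $\lim$/${\lim}^{1}$ or compactness argument). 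None of this is addressed. The standard proof avoids these issues by building the aisle on the big category from $\Phi^{L}(\Dd^{\leq -1})$ and testing the coaisle by $\Hom(\Phi^{L}B,X)\cong\Hom(B,\Phi X)$, a computation that takes place entirely inside $\Dd$, where the t-structure lives. As written, your argument proves the statement only for those $X$ for which the iteration happens to terminate; the general case is a sketch with the hard steps missing. (Minor: you also implicitly use that $\Cc$ and $\Dd$ are triangulated subcategories when closing under cones.)
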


We will always assume that whenever we want to apply Theorem \ref{Theorem 8} to a functor $F:\Cc \rightarrow \Dd$ between triangulated categories, we can extend it to a functor $\tilde{F}:\tilde{\Cc} \rightarrow \tilde{\Dd}$ between cocomplete triangulated categories which satisfies all the conditions in the statement of Theorem \ref{Theorem 8}. 
	
The reason for this is that in all the examples we consider the categories $\Cc$ and $\Dd$ will be bounded derived categories of coherent sheaves $\Dd^b(X)$ and $\Dd^b(Y)$ for smooth varieties $X$ and $Y$. We take $\widetilde{\Cc}$ and $\widetilde{\Dd}$ to be the corresponding cocomplete unbounded derived categories of quasi-coherent sheaves $\Dd_{qcoh}(X)$ and $\Dd_{qcoh}(Y)$.

Any functor $\Phi:\Cc \rightarrow \Dd$ will be given by a FM kernel, and thus naturally extends to quasi-coherent sheaves. Furthermore, all such functors will have right adjoints and commute with small coproducts by the Adjoint functor theorem. Finally, $\Phi$ will always be conservative and by Lemma \ref{lemma 7} below we have $\Phi(X) \in \Dd$ implies $X \in \Cc$.

\begin{lemma} [\cite{CKo}, Lemma 2.3] \label{lemma 7} 
Let $X$ and $Y$ be smooth varieties and $\Phi:\Dd_{qcoh}(X) \rightarrow \Dd_{qcoh}(Y)$ be a functor that given by a FM kernel in $\Dd^b(X \times Y)$. If the restriction of $\Phi$ to $\Dd^b(X)$ is conservative, then for any $\Ff \in \Dd_{qcoh}(X)$, $\Phi(\Ff) \in \Dd^b(Y)$ if and only if $\Ff \in \Dd^b(X)$.
\end{lemma}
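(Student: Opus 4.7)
The ``if'' direction is standard: since $X$ and $Y$ are smooth, the kernel $\Pp \in \Dd^b(X \times Y)$ is perfect. The pullback $\pi_1^*$ preserves $\Dd^b$, tensoring with $\Pp$ preserves $\Dd^b(X \times Y)$ (finite Tor amplitude), and the proper pushforward $\pi_{2*}$ preserves bounded coherent complexes (cohomological amplitude bounded by the fibre dimension). Composing, $\Phi(\Dd^b(X)) \subseteq \Dd^b(Y)$.

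For the ``only if'' direction, the plan is to exploit the left adjoint $\Phi^L$ together with the conservativity hypothesis. By Proposition \ref{Proposition 1}, $\Phi^L$ is itself a Fourier--Mukai transform with bounded kernel, so for $\Ff \in \Dd_{qcoh}(X)$ with $\Phi(\Ff) \in \Dd^b(Y)$ the ``if'' direction applied to $\Phi^L$ gives $\Phi^L\Phi(\Ff) \in \Dd^b(X)$. Completing the counit to a triangle
\[
\Phi^L\Phi(\Ff) \xrightarrow{\varepsilon_\Ff} \Ff \to C \to \Phi^L\Phi(\Ff)[1],
\]
applying $\Phi$, and using the triangle identity $\Phi(\varepsilon_\Ff) \circ \eta_{\Phi(\Ff)} = \mathrm{id}$, one sees that $\Phi(\varepsilon_\Ff)$ is a split epimorphism; so $\Phi(C)$ is a direct summand of $\Phi\Phi^L\Phi(\Ff)[1]$, which is bounded by Proposition \ref{Proposition 2}. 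Thus both $\Phi^L\Phi(\Ff)$ and $C$ lie in the subcategory $\Cc = \{\Gg \in \Dd_{qcoh}(X) \mid \Phi(\Gg) \in \Dd^b(Y)\}$, and the task reduces to showing $\Cc \subseteq \Dd^b(X)$.

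To this end I would first use the finite cohomological amplitude of $\Phi$ and $\Phi^L$ (coming from boundedness of their kernels on the smooth variety $X \times Y$) to deduce boundedness of any $\Ff \in \Cc$: for $N$ sufficiently large, applying $\Phi$ to the truncation triangle $\tau^{\leq N}\Ff \to \Ff \to \tau^{>N}\Ff$ and inspecting the long exact cohomology sequence forces $\Phi(\tau^{>N}\Ff)$ to have cohomology concentrated in a narrow band of width $2c$ around $N$, where $c$ is the cohomological amplitude of $\Phi$; the symmetric argument on $\tau^{\leq -N}\Ff$ confines $\Ff$ to a bounded range of degrees. To upgrade quasi-coherent cohomology sheaves to coherent ones, express each $\mathcal{H}^i(\Ff)$ as the filtered colimit of its coherent subsheaves $G_\alpha$; since $\Phi$ commutes with filtered colimits and $\Phi(\Ff)$ has noetherian cohomology on $Y$, the system $\Phi(G_\alpha)$ must eventually stabilise, and the conservativity of $\Phi|_{\Dd^b(X)}$ applied to the cones of the stabilised chain forces $G_\alpha$ itself to stabilise, identifying $\mathcal{H}^i(\Ff)$ with a coherent subsheaf. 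The main obstacle is this last step: since conservativity is only assumed on $\Dd^b(X)$, every vanishing argument about an a priori merely quasi-coherent tail has to be funnelled through a coherent approximation, and the key device is precisely the commutation of $\Phi$ with filtered colimits combined with noetherianness on $Y$.
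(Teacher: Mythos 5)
First, note that the paper does not prove this statement at all: it is imported verbatim from \cite{CKo} (Lemma 2.3) and used as a black box, so the only comparison available is with the proof in that reference. Judged on its own terms, your sketch has genuine gaps in the ``only if'' direction. The counit-triangle step is circular: you reduce the claim ``$\Phi(\Ff)\in\Dd^b(Y)\Rightarrow\Ff\in\Dd^b(X)$'' to ``$\Cc\subseteq\Dd^b(X)$'', where $\Cc$ is by definition the class of all objects satisfying the hypothesis -- that is the original statement, so nothing is gained. More seriously, the boundedness argument does not close. From the truncation triangle $\tau^{\leq N}\Ff\to\Ff\to\tau^{>N}\Ff$ and the finite amplitude of $\Phi$ you correctly get that $\Phi(\tau^{>N}\Ff)$ has cohomology confined to a band of bounded width near $N$; but a complex concentrated in a band can perfectly well be nonzero, and to conclude $\tau^{>N}\Ff=0$ (or even that $\Ff$ has bounded cohomology) you would have to apply conservativity to $\tau^{>N}\Ff$, which is a priori neither bounded nor coherent -- exactly the objects on which conservativity is \emph{not} assumed. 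No mechanism converting ``narrow band'' into ``vanishing'' is supplied, and this is the heart of the lemma, not a routine step.

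The coherence step has the same problem in a different guise. You assert that the filtered system $\Phi(G_\alpha)$, for $G_\alpha$ the coherent subsheaves of $\mathcal{H}^i(\Ff)$, must stabilise because $\Phi(\Ff)$ has noetherian cohomology. But $\Phi(\mathcal{H}^i(\Ff))$ is not a subobject of $\Phi(\Ff)$, nor are the sheaves $H^j(\Phi(G_\alpha))$ subsheaves of the cohomology of $\Phi(\Ff)$; they are related only through a hypercohomology spectral sequence, whose $E_2$-terms are not subquotients of the abutment. Hence the noetherian hypothesis on $H^*(\Phi(\Ff))$ imposes no ascending chain condition on the $\Phi(G_\alpha)$, and the stabilisation -- the very step that would allow you to apply conservativity to the coherent cones $G_\beta/G_\alpha$ (that last application is fine) -- is unproved. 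You do flag this as ``the main obstacle'', but flagging it is not the same as overcoming it; as written, both halves of the ``only if'' direction assume precisely what needs to be shown. A minor additional point: the ``if'' direction needs $\pi_2$ to be proper on the support of the kernel (automatic in the paper's projective setting, but not contained in ``smooth varieties'' alone), and this should be said explicitly.
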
 

We collect some technical results that we will need about t-structures.

\begin{lemma} [\cite{Po}, Lemma 1.1.1] \label{lemma 8}
Let $\Phi: \Cc_{1} \rightarrow \Cc_{2}$ be a conservative t-exact functor between triangulated categories. Then 
\begin{align*}
	&\Cc_{1}^{\leq 0}=\{X\in \text{Ob}(\Cc_{1}) \ | \ \Phi(X) \in \text{Ob}(\Cc_{2}^{\leq 0}) \} \\
	&\Cc_{1}^{\geq 0}=\{X\in \text{Ob}(\Cc_{1}) \ | \ \Phi(X) \in \text{Ob}(\Cc_{2}^{\geq 0}) \} 
\end{align*}
\end{lemma}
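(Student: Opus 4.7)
The plan is to prove the two equalities symmetrically; I will spell out the argument for $\Cc_1^{\leq 0}$, and the argument for $\Cc_1^{\geq 0}$ will follow by the dual template. One containment is immediate from t-exactness: if $X \in \Cc_1^{\leq 0}$ then $\Phi(X) \in \Cc_2^{\leq 0}$, so only the reverse inclusion requires work.

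For the reverse direction, suppose $X \in \mathrm{Ob}(\Cc_1)$ satisfies $\Phi(X) \in \Cc_2^{\leq 0}$. I apply $\Phi$ to the truncation triangle
\[
\tau^{\leq 0}X \longrightarrow X \longrightarrow \tau^{\geq 1}X \longrightarrow \tau^{\leq 0}X[1]
\]
of the t-structure on $\Cc_1$. By t-exactness, $\Phi(\tau^{\leq 0}X) \in \Cc_2^{\leq 0}$ and $\Phi(\tau^{\geq 1}X) \in \Cc_2^{\geq 1}$, while the middle term $\Phi(X)$ lies in $\Cc_2^{\leq 0}$ by hypothesis. Reading the long exact sequence in cohomology (with respect to the t-structure on $\Cc_2$) in each degree $i \geq 1$, both $H^{i}(\Phi(\tau^{\leq 0}X))$ and $H^{i}(\Phi(X))$ vanish, forcing $H^{i}(\Phi(\tau^{\geq 1}X))=0$ for $i \geq 1$; and $H^{i}(\Phi(\tau^{\geq 1}X))=0$ for $i<1$ holds automatically since $\Phi(\tau^{\geq 1}X) \in \Cc_2^{\geq 1}$. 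Therefore $\Phi(\tau^{\geq 1}X) = 0$.

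Conservativity of $\Phi$ now finishes the argument: $\tau^{\geq 1}X = 0$, so the truncation triangle degenerates to $X \cong \tau^{\leq 0}X$, placing $X$ in $\Cc_1^{\leq 0}$. The identity $\Cc_{1}^{\geq 0}=\{X \ | \ \Phi(X) \in \Cc_{2}^{\geq 0} \}$ is proved by exactly the same template, applied to the triangle $\tau^{\leq -1}X \to X \to \tau^{\geq 0}X$ and the long exact sequence in degrees $i \leq -1$, concluding $\Phi(\tau^{\leq -1}X)=0$ and then $\tau^{\leq -1}X=0$. I do not anticipate any real obstacle: the whole argument rests only on the defining properties of a t-exact functor, the standard truncation triangles, and the fact that conservativity of $\Phi$ reflects vanishing of the truncated pieces back from $\Cc_2$ to $\Cc_1$.
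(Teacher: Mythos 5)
The paper does not actually prove this lemma --- it is quoted from Polishchuk --- so the comparison is with the standard argument for it. Your overall strategy (apply $\Phi$ to the truncation triangle, show $\Phi(\tau^{\geq 1}X)=0$, then use conservativity) is exactly the right skeleton, and the easy inclusion, the long exact sequence bookkeeping, and the final conservativity step are fine. The gap is in how you kill $\Phi(\tau^{\geq 1}X)$: from $H^{i}(\Phi(\tau^{\geq 1}X))=0$ for all $i$ you conclude $\Phi(\tau^{\geq 1}X)=0$, but that implication is precisely non-degeneracy of the t-structure on $\Cc_{2}$, which is not among the hypotheses. For a degenerate t-structure (e.g.\ $\Cc_{2}^{\leq 0}=0$, $\Cc_{2}^{\geq 0}=\Cc_{2}$, whose heart is zero) every object has vanishing t-cohomology in all degrees, so the step fails in the stated generality; all you can deduce from the vanishing of the $H^{i}$ is that $\Phi(\tau^{\geq 1}X)$ lies in $\bigcap_{n}\Cc_{2}^{\geq n}$.

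The standard repair avoids cohomology altogether. Since $\Phi(X)\in\Cc_{2}^{\leq 0}$ and $\Phi(\tau^{\geq 1}X)\in\Cc_{2}^{\geq 1}$, the axiom $\Hom(\Cc_{2}^{\leq 0},\Cc_{2}^{\geq 1})=0$ forces the middle map in the triangle $\Phi(\tau^{\leq 0}X)\to\Phi(X)\to\Phi(\tau^{\geq 1}X)$ to vanish; rotating, the triangle splits and $\Phi(\tau^{\geq 1}X)$ is a direct summand of $\Phi(\tau^{\leq 0}X)[1]\in\Cc_{2}^{\leq -1}$. The identity of $\Phi(\tau^{\geq 1}X)$ then factors through $\Hom(\Phi(\tau^{\leq 0}X)[1],\Phi(\tau^{\geq 1}X))=0$, so $\Phi(\tau^{\geq 1}X)=0$, and your conservativity argument concludes $X\cong\tau^{\leq 0}X$; the $\Cc_{1}^{\geq 0}$ half is repaired dually. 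Note that in the paper's concrete applications the target is $\Dd^b(\GG(0,N))$ with the standard (non-degenerate) t-structure, so your argument would suffice there, but as a proof of the lemma as stated it needs this fix.
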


\begin{lemma}[\cite{CKo}, Lemma 2.6] \label{lemma 9} 
Let $(\Cc^{\leq 0}, \Cc^{\geq 0})$ be a t-structure on $\Cc$ and suppose $X, Y \in \Cc$. If $X \oplus Y \in \Cc^{\leq 0}$ then $X, Y \in \Cc^{\leq 0}$. Similarly, if $X \oplus Y \in \Cc^{\geq 0}$ then $X, Y \in \Cc^{\geq 0}$.
\end{lemma}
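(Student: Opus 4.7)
The plan is to reduce the statement to the characterization $W \in \Cc^{\leq 0}$ if and only if $\tau^{\geq 1}(W) = 0$, and symmetrically $W \in \Cc^{\geq 0}$ if and only if $\tau^{\leq -1}(W) = 0$. Both characterizations are immediate from the truncation triangle $\tau^{\leq 0}W \to W \to \tau^{\geq 1}W \to$ and its analogue. The key observation is that the truncation functors are additive, since they are adjoint to the inclusions $\iota^{\geq 1}$ and $\iota^{\leq -1}$, and hence preserve finite direct sums.

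For the first assertion, I would apply $\tau^{\geq 1}$ to the hypothesis $X \oplus Y \in \Cc^{\leq 0}$ to get
\[
0 \;=\; \tau^{\geq 1}(X \oplus Y) \;\cong\; \tau^{\geq 1}(X) \oplus \tau^{\geq 1}(Y).
\]
It then remains to note that any direct summand of a zero object in an additive category is zero: if $A \oplus B = 0$, then $\id_A$ factors through $0$, whence $\id_A = 0$ and $A = 0$. Applying this to the display forces $\tau^{\geq 1}(X) = \tau^{\geq 1}(Y) = 0$, so $X, Y \in \Cc^{\leq 0}$. The second assertion is obtained by the symmetric argument with $\tau^{\leq -1}$ in place of $\tau^{\geq 1}$.

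There is no real obstacle in this lemma; the proof is essentially bookkeeping once one commits to the truncation-functor viewpoint. The only points worth checking explicitly are the additivity of $\tau^{\geq 1}$ and $\tau^{\leq -1}$ (immediate from the adjoint characterization) and the elementary fact that a summand of zero is zero. An alternative but equivalent route would use the Hom-vanishing description of $\Cc^{\leq 0}$, namely that $\Hom(X \oplus Y, Z) = \Hom(X,Z) \oplus \Hom(Y,Z)$ vanishes for all $Z \in \Cc^{\geq 1}$ iff each summand vanishes; I prefer the truncation approach as it avoids invoking the (standard but slightly longer) converse that Hom-vanishing implies membership in $\Cc^{\leq 0}$.
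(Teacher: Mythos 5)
Your proof is correct, and it is the standard argument: the paper itself gives no proof of this lemma (it simply cites \cite{CKo}, Lemma 2.6), and the cited proof proceeds in essentially the same way, using that the truncations of a sum decompose and that a direct summand of $0$ is $0$. The only implicit ingredients — that $\tau^{\geq 1}$ and $\tau^{\leq -1}$, being adjoints between additive categories, preserve finite direct sums, and that $W \in \Cc^{\leq 0}$ iff $\tau^{\geq 1}W = 0$ (via the adjunction or uniqueness of the truncation triangle) — are exactly the standard facts you name, so there is no gap.
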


\begin{lemma}[\cite{CKo}, Lemma 2.8] \label{lemma 10} 
	Suppose we have two t-structures $(\Cc^{\leq_{1} 0}, \Cc^{\geq_{1} 0})$ and $(\Cc^{\leq_{2} 0}, \Cc^{\geq_{2} 0})$ on $\Cc$ such that $\Cc^{\leq_{1} 0} \subset \Cc^{\leq_{2} 0}$ and $\Cc^{\geq_{1} 0} \subset \Cc^{\geq_{2} 0}$. Then the two t-structures are identical.
\end{lemma}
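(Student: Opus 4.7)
The plan is to establish the two reverse inclusions $\Cc^{\leq_{2} 0} \subset \Cc^{\leq_{1} 0}$ and $\Cc^{\geq_{2} 0} \subset \Cc^{\geq_{1} 0}$; these are formally dual, so I will concentrate on the first and then invoke symmetry for the second.

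Let $X \in \Cc^{\leq_{2} 0}$. Truncating with respect to the first t-structure yields a distinguished triangle
\begin{equation*}
\tau^{\leq_{1} 0}X \longrightarrow X \xrightarrow{v} \tau^{\geq_{1} 1}X \longrightarrow \tau^{\leq_{1} 0}X[1].
\end{equation*}
Since $\tau^{\geq_{1} 1}X \in \Cc^{\geq_{1} 1} \subset \Cc^{\geq_{2} 1}$ by the hypothesis, and $X \in \Cc^{\leq_{2} 0}$, the Hom-vanishing axiom of the second t-structure forces $\Hom(X, \tau^{\geq_{1} 1}X) = 0$, so $v = 0$. Rotating one step to the left, the distinguished triangle $\tau^{\geq_{1} 1}X[-1] \to \tau^{\leq_{1} 0}X \to X \xrightarrow{v} \tau^{\geq_{1} 1}X$ has vanishing final connecting map, so it splits and furnishes a direct-sum decomposition $X \cong \tau^{\leq_{1} 0}X \oplus \tau^{\geq_{1} 1}X[-1]$.

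I would then apply Lemma \ref{lemma 9} to this decomposition: both summands must lie in $\Cc^{\leq_{2} 0}$. The summand $\tau^{\geq_{1} 1}X[-1]$ also lies in $\Cc^{\geq_{1} 2} \subset \Cc^{\geq_{2} 2} \subset \Cc^{\geq_{2} 1}$, and any object $Y \in \Cc^{\leq_{2} 0} \cap \Cc^{\geq_{2} 1}$ satisfies $\Hom(Y, Y) = 0$, hence vanishes. Therefore $\tau^{\geq_{1} 1}X = 0$ and $X \cong \tau^{\leq_{1} 0}X \in \Cc^{\leq_{1} 0}$, as required. The reverse inclusion $\Cc^{\geq_{2} 0} \subset \Cc^{\geq_{1} 0}$ follows by the mirror argument applied to the truncation triangle $\tau^{\leq_{1} -1}X \to X \to \tau^{\geq_{1} 0}X$, where now the first map is the one forced to vanish by Hom-vanishing in the second t-structure. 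I do not anticipate any real obstacle; the only point requiring attention is the bookkeeping of the shift convention $\Cc^{\geq n} = \Cc^{\geq 0}[-n]$ to confirm that the unwanted summand genuinely lands in $\Cc^{\geq_{2} 1}$, and the identification of $v = 0$ (rather than $w = 0$) as the condition that produces the splitting after rotation.
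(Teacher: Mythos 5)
Your overall strategy (truncate $X$ with respect to the first t-structure, use Hom-vanishing for the second t-structure to kill a map in the truncation triangle, split, and finish with Lemma \ref{lemma 9}) is sound, and since the paper itself gives no argument for this lemma beyond citing \cite{CKo}, this is essentially the intended route. However, the splitting step is wrong as written, and the deduction that follows it is therefore unjustified. In a distinguished triangle $P \to Q \to R \xrightarrow{h} P[1]$ with $h=0$ it is the \emph{middle} term that decomposes, $Q \cong P \oplus R$. In your rotated triangle $\tau^{\geq_{1} 1}X[-1] \to \tau^{\leq_{1} 0}X \to X \xrightarrow{v} \tau^{\geq_{1} 1}X$ the middle term is $\tau^{\leq_{1} 0}X$, so $v=0$ yields
\begin{equation*}
\tau^{\leq_{1} 0}X \;\cong\; X \oplus \bigl(\tau^{\geq_{1} 1}X\bigr)[-1],
\end{equation*}
not $X \cong \tau^{\leq_{1} 0}X \oplus (\tau^{\geq_{1} 1}X)[-1]$. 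Equivalently, $v=0$ produces (by applying $\Hom(X,-)$ to the triangle) a section of $\tau^{\leq_{1} 0}X \to X$, exhibiting $X$ as a direct summand of $\tau^{\leq_{1} 0}X$; the decomposition you assert would instead follow from vanishing of the connecting map $w\colon \tau^{\geq_{1} 1}X \to (\tau^{\leq_{1} 0}X)[1]$, which you have not established. Hence your application of Lemma \ref{lemma 9} to the claimed summand decomposition of $X$ inside $\Cc^{\leq_{2} 0}$ has no basis at that point of the argument.

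The repair is immediate and actually shortens the proof: from the correct splitting, $X$ is a direct summand of $\tau^{\leq_{1} 0}X \in \Cc^{\leq_{1} 0}$, so Lemma \ref{lemma 9} applied to the \emph{first} t-structure gives $X \in \Cc^{\leq_{1} 0}$ directly; proving $\tau^{\geq_{1} 1}X = 0$ is unnecessary (though it follows too, since the other summand $(\tau^{\geq_{1} 1}X)[-1]$ then lies in $\Cc^{\leq_{1} 0} \cap \Cc^{\geq_{1} 2}$ and so has vanishing endomorphisms). The mirror half needs the same correction: for $X \in \Cc^{\geq_{2} 0}$, vanishing of $\tau^{\leq_{1} -1}X \to X$ splits the third term of the triangle, giving $\tau^{\geq_{1} 0}X \cong X \oplus (\tau^{\leq_{1} -1}X)[1]$, and Lemma \ref{lemma 9} for the first t-structure concludes. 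Your identification of which map is killed in each half, and the Hom-vanishing justifications via $\Cc^{\geq_{1} 1} \subset \Cc^{\geq_{2} 1}$ and $\Cc^{\leq_{1} -1} \subset \Cc^{\leq_{2} -1}$, are all correct; only the middle-versus-outer-term bookkeeping in the splitting needs to be fixed.
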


\subsection{A result due to R. Bezrukavnikov}
One of the applications for exceptional collections is to construct t-structures on triangulated categories. In order to construct t-structures on $\Dd$, we need the notion of dual exceptional collections.

\begin{definition} \label{definition 8} 
 	Let $\{E_{1},...,E_{n}\}$ be an exceptional collection on the triangulated category $\Dd$. A set $\{F_{1},...,F_{n}\}$ of objects in $\Dd$ is called dual to $\{E_{1},...,E_{n}\}$ if $\Hom_{\Dd}(F_{i},E_{i})=\CC$ and $\Hom_{\Dd}(F_{i},E_{j})=0$ for $i \neq j$.
\end{definition} 

\begin{remark}
	The set $\{F_{1},...,F_{n}\}$ is exceptional provided that $\{E_{1},...,E_{n}\}$ is, where the order on $\{F_{1},...,F_{n}\}$ is opposite to that on $\{E_{1},...,E_{n}\}$.
\end{remark}

The following result is due to R. Bezrukavnikov.

\begin{theorem}[\cite{Be1} Proposition 1, \cite{Be2} Proposition 2] \label{Theorem 6}
	Let $\Dd$ be a triangulated category. Let $\{E_{1},...,E_{n}\}$ be a full exceptional collection and $\{F_{1},...,F_{n}\}$  be its dual exceptional collection. Then there exists an unique t-structure ($\Dd^{\leq 0}$, $\Dd^{\geq 0}$) on $\Dd$ such that $E_{i} \in \text{Ob}(\Dd^{\geq 0})$ and $F_{i} \in \text{Ob}(\Dd^{\leq 0})$. Moreover, $\Dd^{\leq 0}$, $\Dd^{\geq 0}$ are given by
		\begin{align*}
			& \Dd^{\geq 0}=\langle \{E_{i}[d] \ | \ 1 \leq i \leq n, \ d \leq 0 \} \rangle \\
			& 	\Dd^{\leq 0}=\langle \{F_{i}[d] \ | \ 1 \leq i \leq n, \ d \geq 0 \} \rangle
		\end{align*}
\end{theorem}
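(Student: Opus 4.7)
My plan is to verify directly that the two subcategories stated form a t-structure, and to deduce uniqueness from Lemma \ref{lemma 10}. Throughout, I will denote $\Dd^{\geq 0} := \langle E_i[d] : 1 \leq i \leq n,\ d \leq 0\rangle$ and $\Dd^{\leq 0} := \langle F_i[d] : 1 \leq i \leq n,\ d \geq 0\rangle$, where the angle brackets mean the smallest full subcategory closed under extensions and direct summands containing the listed objects (but not under arbitrary shifts).

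The shift stability $\Dd^{\geq 1} \subseteq \Dd^{\geq 0}$ and $\Dd^{\leq -1} \subseteq \Dd^{\leq 0}$ is immediate from the definitions, since the generators of the shifted subcategories lie among those of the originals. Next, to verify the Hom-vanishing axiom $\Hom_{\Dd}(\Dd^{\leq 0}, \Dd^{\geq 1}) = 0$, I would reduce to the generators via exactness of $\Hom$ in each slot and closure under extensions/summands: it suffices to check $\Hom_{\Dd}(F_i[a], E_j[b]) = \Hom_{\Dd}(F_i, E_j[b-a]) = 0$ for $a \geq 0$ and $b \leq -1$. Invoking the standing convention that accompanies the dual exceptional collection, namely $\Hom_{\Dd}(F_i, E_j[\ell]) = \delta_{ij}\delta_{\ell,0}\CC$, and noting $b-a \leq -1 \neq 0$, the vanishing follows.

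The substantive step will be constructing the truncation triangles. The plan is to exploit fullness: $\{E_1, \ldots, E_n\}$ furnishes a semi-orthogonal decomposition $\Dd = \langle \Dd_1, \ldots, \Dd_n\rangle$ with each $\Dd_i = \langle E_i\rangle \simeq \Dd^b(\Vect_{\CC})$ carrying its tautological t-structure. I would then glue these t-structures via the Beilinson--Bernstein--Deligne construction, obtaining a t-structure on $\Dd$ whose truncation functors are assembled inductively from the standard truncations on the pieces $\Dd_i$ together with the mutation functors of the semi-orthogonal decomposition. The remaining task is to identify this glued t-structure with the one in the statement: an object lies in the $\geq 0$ part iff each of its graded pieces sits in non-positive cohomological shifts of $E_i$, and dually for the $\leq 0$ part after reexpressing in terms of the $F_j$'s via the duality relation.

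Uniqueness will then follow formally: any t-structure $(\Dd'^{\leq 0}, \Dd'^{\geq 0})$ with $E_i \in \Dd'^{\geq 0}$ and $F_i \in \Dd'^{\leq 0}$ must contain the extension/summand closures that define $\Dd^{\geq 0}$ and $\Dd^{\leq 0}$, giving $\Dd^{\geq 0} \subseteq \Dd'^{\geq 0}$ and $\Dd^{\leq 0} \subseteq \Dd'^{\leq 0}$, and Lemma \ref{lemma 10} upgrades these inclusions to equalities. The main obstacle will be the last identification in the previous paragraph, namely matching the $\leq 0$ part produced by the BBD gluing with the $\{F_j\}$-extension closure. This amounts to showing that each $F_j$ can be obtained via iterated right mutations among the $E_i$'s using only non-negative shifts of the resulting pieces, which is precisely where the duality hypothesis and fullness interact; once one has careful bookkeeping of the shifts that appear in the mutation triangles, the remaining assembly is essentially formal.
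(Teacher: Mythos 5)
There is no in-paper proof to compare against: the paper quotes Theorem \ref{Theorem 6} from Bezrukavnikov (\cite{Be1} Proposition 1, \cite{Be2} Proposition 2) and never proves it. Your sketch is, in outline, a reconstruction of Bezrukavnikov's own argument: check the shift-containment and Hom-vanishing axioms directly on generators, produce the truncation triangles by gluing the tautological t-structures along the semiorthogonal decomposition $\Dd=\langle\langle E_{1}\rangle,\ldots,\langle E_{n}\rangle\rangle$ supplied by fullness, and get uniqueness from the fact that aisles and coaisles are closed under extensions, the relevant shifts and direct summands, combined with Lemma \ref{lemma 10}. The axiom (1)--(2) verifications and the uniqueness argument as you state them are correct.

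Two caveats. First, your Hom-vanishing step uses the graded duality $\Hom_{\Dd}(F_{i},E_{j}[\ell])=\delta_{ij}\delta_{\ell,0}\,\CC$; this is genuinely needed (the degree-zero condition of Definition \ref{definition 8} alone would not give the vanishing, nor the theorem), and it is what actually holds in the paper's application via Lemma \ref{lemma 3}, so your ``standing convention'' is the right reading but should be made an explicit hypothesis. Second, the substantive content of the existence part --- that the glued t-structure has exactly the extension closure of $\{E_{i}[d],\ d\leq 0\}$ as its $\geq 0$ part and of $\{F_{i}[d],\ d\geq 0\}$ as its $\leq 0$ part, equivalently that each $F_{i}$ lands in the glued $\Dd^{\leq 0}$ and that the reverse inclusions hold --- is only gestured at through ``iterated right mutations with controlled shifts.'' That identification is where the duality between the two collections really enters, and it requires keeping track of which adjoint ($i^{*}$ versus $i^{!}$) characterizes each side of the glued t-structure in the recollement (whose existence uses admissibility of each $\langle E_{i}\rangle$, available here by exceptionality and Hom-finiteness). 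As written this step is a plan rather than a proof; it is the one piece that must either be carried out in detail or delegated to \cite{Be1}, \cite{Be2}, as the paper does.
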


\begin{remark}
For a set $S$ of objects in $\Dd$, we denote $\langle S \rangle$ to be the full subcategory of $\Dd$ generated by $S$ under extensions.
\end{remark}

\begin{remark}
The above theorem is part of R. Bezrukavnikov's result in \cite{Be1}, \cite{Be2}. In fact, R. Bezrukavnikov also give a description of the irreducible objects in the heart $\Dd^{\heartsuit}$ of the above t-structure. However, the author would not touch this in this article, so we just state what we need here.
\end{remark}

\subsection{Application to t-structures on $\Dd^b(\GG(k,N))$}

From Corollary \ref{Corollary 2}, we have the  exceptional collections $\R_{(k,N-k)}$  for $\Dd^b(\GG(k,N))$. So it would be natural to apply Theorem \ref{Theorem 6} to construct t-structures on $\Dd^b(\GG(k,N))$.

Since we only have the exceptional collection, in order to get a t-structure, we need to find the dual exceptional collection. The following lemma would be useful for us to find the dual exceptional collection.

\begin{lemma} [\cite{Kap88} Lemma 3.2]  \label{lemma 3}  
	If $N-k \geq \lambda_{1} \geq \lambda_{2} \geq ...\geq \lambda_{k} \geq 0$ and $k \geq \mu_{1} \geq \mu_{2} \geq ... \geq \mu_{N-k} \geq 0$. Then one has 
	\begin{equation*}
		H^{i}(\GG(k,N),\s_{\lambda}\V \otimes \s_{\mu} (\CC^N/\V)^{\vee})= \begin{cases}
			0 & \text{for all} \ i, \ \text{if} \ \lambda \neq \mu^{*} \\
			0 &  \text{for} \ i \neq |\lambda| = |\mu|, \ \text{if} \ \lambda = \mu^{*} \\
			\CC &  \text{for} \ i = |\lambda| = |\mu|, \ \text{if} \ \lambda = \mu^{*} 
		\end{cases}
	\end{equation*}
\end{lemma}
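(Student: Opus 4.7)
The plan is to prove this by a direct application of the Borel-Weil-Bott theorem on the Grassmannian $\GG(k,N) = \GLL_N/P$, where $P$ is the parabolic subgroup whose Levi factor is $\GLL_k \times \GLL_{N-k}$. First I would identify $\s_\lambda \V \otimes \s_\mu (\CC^N/\V)^\vee$ as the homogeneous bundle on $G/P$ associated to the irreducible $P$-representation of highest weight
\[
\gamma = (-\lambda_k, \ldots, -\lambda_1,\ \mu_1, \ldots, \mu_{N-k}) \in \ZZ^N,
\]
where the sign flip on the $\lambda$-entries reflects the fact that $\V$ is the tautological sub-bundle (negative on lines) while $\CC^N/\V$ is a quotient bundle. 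Borel-Weil-Bott then says that the total cohomology vanishes unless the vector $\gamma + \rho$, with $\rho = (N-1, N-2, \ldots, 0)$, has all distinct entries; if it does, the cohomology is concentrated in the single degree $\ell(w)$, where $w$ is the unique permutation sorting $\gamma + \rho$ into strictly decreasing order, and equals (as a $\GLL_N$-representation) the irreducible of highest weight $w(\gamma+\rho) - \rho$.

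Next I would analyze the entries of $\gamma + \rho$. The first $k$ entries work out to $\{N-1-(\lambda_i + k - i) : i = 1, \ldots, k\}$, which under the classical bijection $\lambda \leftrightarrow S_\lambda := \{\lambda_i + k - i : i = 1, \ldots, k\}$ between $P(N-k,k)$ and $k$-subsets of $\{0, 1, \ldots, N-1\}$ becomes the reflected set $R(S_\lambda)$, where $R(x) = N-1-x$; the last $N-k$ entries are $\{\mu_j + (N-k) - j : j = 1, \ldots, N-k\} = T_\mu$ under the analogous bijection for $P(k, N-k)$. The key combinatorial ingredient is the standard identity $T_{\lambda^*} = R\bigl(\{0, \ldots, N-1\} \setminus S_\lambda\bigr)$, which records that transposition of partitions corresponds to complementation-and-reflection of the boundary-path subset. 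Since $R(S_\lambda)$ and $T_\mu$ both sit in $\{0, \ldots, N-1\}$ and their sizes sum to $N$, they are disjoint precisely when they partition $\{0, \ldots, N-1\}$, which by the identity is equivalent to $\mu = \lambda^*$. Hence for $\lambda \neq \mu^*$ the two blocks must overlap, $\gamma + \rho$ is non-regular, and every cohomology group vanishes.

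In the remaining case $\lambda = \mu^*$, the multiset of entries of $\gamma + \rho$ equals $\{0, 1, \ldots, N-1\}$, so the decreasingly sorted version is $\rho$ itself, forcing $w(\gamma+\rho) - \rho = 0$. The trivial $\GLL_N$-representation has dimension $1$, which gives $H^{\ell(w)} = \CC$. To finish I would identify the degree: since the entries are already strictly decreasing inside each of the two blocks, $\ell(w)$ equals the number of pairs $(x, y) \in R(S_\lambda) \times T_\mu$ with $x < y$, and a short rewriting identifies this with the number of boxes of $\lambda$, i.e., $|\lambda| = |\mu|$. The main subtlety will be the bookkeeping around sign and reflection conventions when passing between homogeneous bundles and Levi highest weights; once those are pinned down, the entire argument reduces to a combinatorial check against Borel-Weil-Bott.
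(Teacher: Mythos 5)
Your argument is correct, and in fact it supplies a proof where the paper gives none: the paper simply cites this statement as Lemma 3.2 of Kapranov's paper and never proves it, so there is no internal proof to compare against. Your route --- realizing $\s_{\lambda}\V \otimes \s_{\mu}(\CC^N/\V)^{\vee}$ as the homogeneous bundle on $\GLL_N/P$ attached to a Levi-dominant weight and running Bott's theorem --- is the standard (and essentially Kapranov's original) argument, and the combinatorial core checks out: the two blocks of $\gamma+\rho$ are, as subsets of $\{0,\dots,N-1\}$, the reflected set $R(S_{\lambda})$ and the set $T_{\mu}$ attached to $\mu$, the identity $T_{\lambda^{*}}=R(\{0,\dots,N-1\}\setminus S_{\lambda})$ shows regularity of $\gamma+\rho$ is equivalent to $\mu=\lambda^{*}$, in which case the sorted weight is $\rho$ itself (trivial representation, hence $\CC$), and the inversion count between the two blocks is $\sum_i\lambda_i=|\lambda|$, giving the degree. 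One small remark on the convention issue you flag: for this particular lemma it is harmless, because replacing your weight $\gamma$ by its dual convention $-w_{0}\gamma$ reflects all entries of $\gamma+\rho$ by $x\mapsto N-1-x$ and reverses their order, which preserves both regularity and the inversion count, and the output representation is trivial (self-dual) in the only nonvanishing case; so the stated conclusion does not depend on pinning down whether Borel--Weil--Bott produces the irreducible of highest weight $w(\gamma+\rho)-\rho$ or its dual. A quick sanity check against $\Omega_{\PP^{N-1}}=\V\otimes(\CC^N/\V)^{\vee}$ on $\PP^{N-1}$ (cohomology $\CC$ in degree $1$) confirms your sign placement.
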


For the exceptional collection $\R_{(k,N-k)}=\{\s_{\lambda}\V \ | \ \lambda \in P(N-k,k) \}$, applying Lemma \ref{lemma 3}, it is easy to see that the dual exceptional collection for $\R_{(k,N-k)}$ is given by 
\begin{equation} \label{eq 44} 
\R'_{(k,N-k)} = \{\s_{\mu}(\CC^N/\V)[-|\mu|] \ | \ \mu \in P(k,N-k) \}.
\end{equation}

Thus we obtain the following t-structure on $\Dd^b(\GG(k,N))$ by using Theorem \ref{Theorem 6}.

\begin{corollary} \label{Corollary 4}
	There is a t-structure $(\Dd_{ex}^{\leq 0}(k,N-k), \Dd_{ex}^{\geq 0}(k,N-k))$  on $\Dd^b(\GG(k,N))$ such that $\s_{\lambda}\V \in \text{Ob}(\Dd_{ex}^{\geq 0}(k,N-k))$ and $\s_{\mu}(\CC^N/\V)[-|\mu|] \in \text{Ob}(\Dd_{ex}^{\leq 0}(k,N-k))$. Moreover 
	\begin{align*}
		&\Dd_{ex}^{\geq 0}(k,N-k)=\langle \{\s_{\lambda}\V[d] \ | \ \lambda \in P(N-k,k), \ d \leq 0 \} \rangle \\
		& \Dd_{ex}^{\leq 0}(k,N-k)=\langle \{\s_{\mu}(\CC^N/\V)[-|\mu|+d]  \ | \ \mu \in P(k,N-k), \ d \geq 0 \} \rangle.
	\end{align*}
\end{corollary}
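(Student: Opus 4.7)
The plan is to apply Bezrukavnikov's Theorem \ref{Theorem 6} to the Kapranov exceptional collection $\R_{(k,N-k)}$ of Corollary \ref{Corollary 2}. Since $\R_{(k,N-k)}$ is already known to be full and exceptional, the only work remaining is to identify an explicit dual collection in the sense of Definition \ref{definition 8} and to verify the duality pairing; once this is done, the two explicit descriptions of $\Dd^{\geq 0}_{ex}(k,N-k)$ and $\Dd^{\leq 0}_{ex}(k,N-k)$ follow immediately from the general formulas in Theorem \ref{Theorem 6}.

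Concretely, I would propose the dual collection to be $\R'_{(k,N-k)} = \{\s_{\mu}(\CC^N/\V)[-|\mu|] : \mu \in P(k,N-k)\}$, with the pairing given by conjugate partitions: the object dual to $E_{\lambda} := \s_{\lambda}\V$ (for $\lambda \in P(N-k,k)$) is $F_{\lambda} := \s_{\lambda^*}(\CC^N/\V)[-|\lambda|]$, using that $\lambda \in P(N-k,k)$ if and only if $\lambda^* \in P(k,N-k)$ and that $|\lambda|=|\lambda^*|$. To verify the duality condition, I would compute for $\lambda \in P(N-k,k)$ and $\mu \in P(k,N-k)$
\begin{align*}
\Hom_{\Dd}\bigl(\s_{\mu}(\CC^N/\V)[-|\mu|],\, \s_{\lambda}\V\bigr)
&= \Ext^{|\mu|}\bigl(\s_{\mu}(\CC^N/\V),\, \s_{\lambda}\V\bigr) \\
&= H^{|\mu|}\bigl(\GG(k,N),\, \s_{\lambda}\V \otimes \s_{\mu}(\CC^N/\V)^{\vee}\bigr),
\end{align*}
where the last equality uses that $\s_{\mu}(\CC^N/\V)$ is locally free together with $\s_{\mu}(\CC^N/\V)^{\vee} \cong \s_{\mu}((\CC^N/\V)^{\vee})$. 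Lemma \ref{lemma 3} applies directly (the conditions on $\lambda$ and $\mu$ match the membership in $P(N-k,k)$ and $P(k,N-k)$) and reads off that this group vanishes unless $\lambda = \mu^*$, in which case it equals $\CC$. This is precisely the duality condition of Definition \ref{definition 8}, and the exceptionality of $\R'_{(k,N-k)}$ in the opposite order is automatic from the remark after that definition.

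With the dual collection in hand, Theorem \ref{Theorem 6} immediately produces a unique t-structure containing $\s_{\lambda}\V$ in the nonpositive part and $\s_{\mu}(\CC^N/\V)[-|\mu|]$ in the nonnegative part, together with the extension-closed descriptions
\begin{align*}
\Dd_{ex}^{\geq 0}(k,N-k) &= \langle \{\s_{\lambda}\V[d] : \lambda \in P(N-k,k),\ d \leq 0\} \rangle, \\
\Dd_{ex}^{\leq 0}(k,N-k) &= \langle \{\s_{\mu}(\CC^N/\V)[-|\mu|+d] : \mu \in P(k,N-k),\ d \geq 0\} \rangle.
\end{align*}

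I do not foresee any serious obstacle here, since all the mathematical content has already been externalized into Corollary \ref{Corollary 2}, Lemma \ref{lemma 3}, and Theorem \ref{Theorem 6}. The only point requiring a small amount of care is bookkeeping: one must fix a total order on $P(N-k,k)$ refining the combinatorial order that witnesses $\R_{(k,N-k)}$ being an exceptional collection, and then check that the opposite order transported along $\lambda \mapsto \lambda^*$ is the correct one for $\R'_{(k,N-k)}$. Both of these are implicit in Corollary \ref{Corollary 2} and follow purely from the vanishing pattern supplied by Lemma \ref{lemma 3}.
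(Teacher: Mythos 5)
Your proposal is correct and follows essentially the same route as the paper: identify the dual collection $\{\s_{\mu}(\CC^N/\V)[-|\mu|]\}$ via Kapranov's cohomology computation (Lemma \ref{lemma 3}) and then invoke Bezrukavnikov's Theorem \ref{Theorem 6}. The only blemish is a verbal slip where you place $\s_{\lambda}\V$ in the ``nonpositive part'' and the duals in the ``nonnegative part'' (the roles are reversed), but your displayed formulas for $\Dd^{\geq 0}_{ex}$ and $\Dd^{\leq 0}_{ex}$ are the correct ones and match both Theorem \ref{Theorem 6} and the statement.
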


\section{t-structure via categorical action} \label{Section 5}

In this section,  we prove the main theorem of this article. Roughly speaking, given a categorical action of the shifted $q=0$ affine algebra and a fixed t-structure on the highest height category, we use this action to translate this t-structure to other weight categories.

From Theorem \ref{Theorem 3}, we have categorical action of $\dot{\Uu}_{0,N}(L\SL_{2})$ on $\bigoplus_{k} \Dd^b(\GG(k,N))$. So we can apply this main theorem to get t-structures on $\Dd^b(\GG(k,N))$. When the highest weight category is equipped with the standard t-structure, we show that the t-structure we obtain on $\Dd^b(\GG(k,N))$ is precisely the t-structure from  Corollary \ref{Corollary 4}.

Before we move to the main theorem, we need a tool from representations theory which would relate the Kapranov exceptional collection to the categorical action of $\dot{\Uu}_{0.N}(L\SL_{2})$. Moreover, this also serve as a motivation for us to construct the t-structure from categorical action.

\subsection{Borel-Weil-Bott theorem and its relative version} \label{subsection 5.1}

In this section, we recall the classical Borel-Weil-Bott theorem (for type A) and extend it to a relative version. We refer to \cite{D} and \cite{Lur} for more details.

Let $r \geq 2$ be a positive integer. Consider the full flag variety
\[
Fl(r)=\{0=V_{0} \subset V_1 \subset V_2 \subset ... \subset \CC^r=V_{r} \ | \ \dim V_{i}=i \}.
\]

Recall some basic stuffs for the theorem. Let $G=\GLL_{r}(\CC)$ (or $\SLL_{r}(\CC)$) and choose a Borel subgroup $B$ (invertible upper triangular  matrices)  with a maximal torus $T$ (invertible diagonal matrices)  such that $T \subset B \subset G$ and $G/B=Fl(r)$. Also, let $U \subset B$ be the unipotent subgroup such that $B=TU$. A character of $B$ is a morphism $\lambda:B \rightarrow \CC^*$, it is a one-dimensional representation of $B$. Let $B$ act on $G \times \CC$ by $b(g,t)=(gb^{-1},\lambda(b)t)$ for all $b \in B$ and $(g,t) \in G\times \CC$.

This action is free and we define the quotient $L_{\lambda}=G \times_{B} \CC:= (G \times \CC)/B$. The map $L_{\lambda} \rightarrow G/B$ defined by $(g,t)B \mapsto gB$ is a equivariant line bundle on the full flag variety $G/B$. 

Note that since $B=TU$, any such character $\lambda$ is uniquely determined by its restriction to $T$. Furthermore, for $T$ is the group of invertible diagonal matrices, we can see that each $\lambda:T \rightarrow \CC^*$ is of the form
\[
diag(a_1,...,a_r) \mapsto a_1^{\lambda_1}...a_{r}^{\lambda_{r}}
\] for some $\lambda_{1},...,\lambda_{r} \in \ZZ$. We denote $\Lambda(T)$ the multiplicative group of characters of $T$ (also called weights). The above identifies $\Lambda(T)$ to the additive group $\ZZ^r$.

Thus, a weight $\lambda:T \rightarrow \CC^*$  corresponds to a $r$-tuple of integers $(\lambda_{1},...,\lambda_{r})$, and we have the following definition.

\begin{definition}
	We say that a weight $\lambda$ is dominant (resp. regular dominant) if $\lambda_{1} \geq ... \geq \lambda_{r}$ (resp. $\lambda_{1}>...>\lambda_{r}$).
\end{definition}

\begin{definition}
	The fundamental weights are $\chi_{1},\chi_{2},...,\chi_{r-1}$ such that 
	\[
	\chi_{i}=(1,...,1 \ (i\ \text{times}),0,..,0 \ (r-i \ \text{times})).
	\]
	Let $\rho=\sum_{i=1}^{r-1} \chi_{i}=(r-1,r-2,..,1,0)$ this is the half-sum of positive roots.
\end{definition}

Denote $W$ to be the Weyl group corresponds to the pair $(G,T)$, which is juts the symmetric group $S_{r}$. Then $W$ acts on $\Lambda(T)$ in the following way. For any element $w \in W$, we define $w \star \lambda := w(\lambda+\rho)-\rho$.  Also, for any $w \in W$, the length of $w$ is defined to be 
\[
l(w):=\# \{(i,j)|1 \leq i < j \leq r, w(i) > w(j)\}.
\]

We let $w_{0}$ to be the reverse permutation, i.e. $w_{0}(i)=r-i+1$ for all $1 \leq i \leq r$. It has the longest length $r(r-1)/2$, which is equal to the dimension of the full flag variety $Fl(r)$.

Then we can state the Borel-Weil-Bott theorem (for type A).

\begin{theorem}(Borel-Weil-Bott) \label{Theorem 4}
	Let $\lambda:T \rightarrow \CC^*$ be a weight, and $L_{\lambda}$ be the associated equivariant line bundle on $G/B$.  Then we have the following cases \\
	(1) If $-\lambda$ is dominant, then we have $H^0(G/B,L_{\lambda})$ is an irreducible representation of $G$ with lowest weight $\lambda$ (and therefore dual to the irreducible representation of $G$ with highest weight $-\lambda$), i.e. $H^0(G/B,L_{\lambda}) \simeq (\s_{-\lambda}\CC^r)^{\vee}$. Also, $H^i(G/B,L_{\lambda})=0$ for all $i \neq 0$. \\
	(2)If $-\lambda$ is not dominant, and there is no $w \in W$ such that $w \star (-\lambda)$ is dominant, then $H^i(G/B,L_{\lambda})=0$ for all $i$. \\
	(3)If $-\lambda$ is not dominant, and there is an unique $w \in W$ such that $w \star (-\lambda)$ is dominant, then $H^{l(w)}(G/B,L_{\lambda})$ is dual to the irreducible representation of $G$ with highest weight $w \star (-\lambda)$, i.e. $H^{l(w)}(G/B,L_{\lambda}) \simeq (\s_{w \star (-\lambda)}\CC^r)^{\vee}$. All other cohomologies vanish.
\end{theorem}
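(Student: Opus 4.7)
My plan is to prove this by induction on a Weyl-group length, reducing along the natural $\mathbb{P}^1$-fibrations $\pi_i : G/B \to G/P_i$, where $P_i \supset B$ is the minimal parabolic associated to a simple reflection $s_i$. The base case is (1): when $-\lambda$ is dominant, the classical Borel-Weil theorem identifies $H^0(G/B, L_\lambda)$ with the dual of the irreducible representation of highest weight $-\lambda$, and the higher vanishing follows either from Kodaira vanishing or by an induction on the rank, pushing forward through each $\pi_i$ in turn using $H^{>0}(\mathbb{P}^1, \Oo(n)) = 0$ for $n \geq 0$.

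The core inductive computation is $R\pi_{i*}(L_\lambda)$. By flat base change, this is controlled by the restriction of $L_\lambda$ to the $\mathbb{P}^1$-fibers, which is a line bundle whose degree $d_i$ is $\pm \langle \lambda, \alpha_i^\vee \rangle$ up to the sign convention. Three cases arise: if $d_i \geq 0$ then $R^1\pi_{i*}(L_\lambda) = 0$; if $d_i = -1$ then the full derived pushforward vanishes, hence $H^*(G/B, L_\lambda) = 0$ by the Leray spectral sequence; and if $d_i \leq -2$, relative Serre duality on the $\mathbb{P}^1$-bundle, using $\omega_{\pi_i} \cong L_{-\alpha_i}$, produces a natural isomorphism $R^1\pi_{i*}(L_\lambda) \cong \pi_{i*}(L_{s_i \star \lambda})$, yielding $H^j(G/B, L_\lambda) \cong H^{j-1}(G/B, L_{s_i \star \lambda})$.

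I would then iterate. The key observation is that the existence of a unique $w \in W$ with $w \star (-\lambda)$ dominant is equivalent to the regularity of $-\lambda + \rho$ under the $W$-action, which in turn is equivalent to the condition that no $d_i = -1$ state ever arises during the reduction. Hence, if $-\lambda + \rho$ is regular I can repeatedly apply reflections of the third case, each strictly shortening the distance from $-\lambda$ to the dominant chamber under $\star$, until reaching some $\mu = w \star \lambda$ with $-\mu$ dominant; the cumulative homological shift is then the length $l(w)$, and applying the base case (1) to $\mu$ gives (3). If instead $-\lambda + \rho$ lies on a reflection wall, at some stage a simple reflection has $d_i = -1$, forcing the vanishing of all cohomology and establishing (2). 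Uniqueness of $w$ in (3) follows from the dominant chamber being a strict fundamental domain for the $W$-action on regular weights.

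The main technical obstacle is verifying the equivariant identification $R^1\pi_{i*}(L_\lambda) \cong \pi_{i*}(L_{s_i \star \lambda})$ in the third case, which requires both the computation $\omega_{\pi_i} \cong L_{-\alpha_i}$ and careful bookkeeping of the $\rho$-shift defining the dot action. The remainder of the argument, in particular the independence of the iteration from the choice of simple reflection at each stage, reduces to coherence of the dot action under the braid relations and is classical; this is why the literature (as the author indicates by citing \cite{D} and \cite{Lur}) typically quotes rather than reproduces the full proof.
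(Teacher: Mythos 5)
The paper does not prove Theorem \ref{Theorem 4} at all: it is quoted as the classical Borel--Weil--Bott theorem with references to Demazure \cite{D} and Lurie \cite{Lur}, and your sketch is precisely the standard Demazure argument from those references (reduction along the $\PP^1$-fibrations $\pi_i:G/B\to G/P_i$, the three fiber-degree cases, and iteration controlled by regularity of the $\rho$-shifted weight). Structurally your proof is sound, and your reduction of case (2) to the appearance of a degree $-1$ fiber and of uniqueness in (3) to regularity of $-\lambda+\rho$ is the right mechanism.

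One concrete point needs fixing before your formulas prove the statement as written, and it is exactly the bookkeeping you flag at the end. In this paper's normalization (Lemma \ref{lemma 2}: $L_{\lambda}=L_{1}^{\lambda_1}\otimes\cdots\otimes L_{r}^{\lambda_r}$, so the fiber degree of $L_{\lambda}$ along $\pi_i$ is $-\langle\lambda,\alpha_i^{\vee}\rangle$ and global sections appear when $-\lambda$ is dominant), the relative dualizing sheaf is $\omega_{\pi_i}\cong L_{\alpha_i}$, not $L_{-\alpha_i}$, and the reflection step must read $R^{1}\pi_{i*}L_{\lambda}\cong \pi_{i*}L_{\lambda'}$ with $\lambda'=s_i\lambda+\alpha_i$, i.e. $-\lambda'=s_i\star(-\lambda)$ --- the dot action is applied to $-\lambda$, matching the statement of the theorem. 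Your formula $R^{1}\pi_{i*}L_{\lambda}\cong\pi_{i*}L_{s_i\star\lambda}$ is correct only in the opposite convention $L_{\lambda}=G\times_{B}\CC_{-\lambda}$; taken literally with the paper's $L_{\lambda}$ it produces a bundle of fiber degree $-d_i+2$ instead of $-d_i-2$ (already on $\PP^{1}$ the dimensions $h^{1}(\Oo(d))=-d-1$ versus $h^{0}(\Oo(-d+2))=-d+3$ disagree), so the iteration would not terminate at $w\star(-\lambda)$. With that sign corrected, your induction lands on a weight $\mu$ with $-\mu=w\star(-\lambda)$ dominant after exactly $l(w)$ steps, and the base case gives $H^{l(w)}(G/B,L_{\lambda})\cong(\s_{w\star(-\lambda)}\CC^{r})^{\vee}$ as claimed; this is a convention slip rather than a gap in the argument.
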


\subsubsection{Generalization}
Next, we can generalize the Theorem \ref{Theorem 4}  to a relative version. Note that from Theorem \ref{Theorem 3}, we have a categorical action of $\dot{\Uu}_{0,N}(L\SL_2)$ on $\bigoplus_{k} \Dd^b(\GG(k,N))$. 

The action of $\dot{\Uu}_{0,N}(L\SL_2)$ can be described as the following picture.
\[
\xymatrix{ 
	.... \Dd^b(\GG(k+1,N))   \ar@/^/[r]^{{\E}_{r}}   
	& \Dd^b(\GG(k,N))   \ar@/^/[r]^{{\E}_{r}}   \ar@/^/[l]^{{\F}_{s}}
	& \Dd^b(\GG(k-1,N)) \ar@/^/[l]^{{\F}_{s}} ....    }
\] for various $r,s$ within the range in Definition \ref{definition 1}.

More generally, we can consider $r,s \in \ZZ$. Since ${\E}_{r}$ is defined by using the following correspondence
\begin{equation} \label{diagram9}  
	\xymatrix{ 
		&&Fl(k-1,k)=\{0 \overset{k-1}{\subset} V' \overset{1}{\subset} V \overset{N-k}{\subset} \CC^N \} 
		\ar[ld]_{p_1} \ar[rd]^{p_2}   \\
		& \GG(k,N)  && \GG(k-1,N)
	}
\end{equation}  and by definition 
\[
{\E}_{r}:=p_{2*}(p_{1}^{*}\otimes(\V/\V')^{r}):\Dd^{b} (\GG(k,N)) \rightarrow \Dd^{b}(\GG(k-1,N))
\] where $\V,\V'$ to be the tautological bundles on $Fl(k-1,k)$ of rank $k$, $k-1$, respectively. Similarly for ${\F}_{s}$ where $s \in \ZZ$.

Our generalization comes from the study of the composition of functors $\bo_{(k,N-k)}{\F}_{\lambda_{r}}{\F}_{\lambda_{r-1}}...{\F}_{\lambda_{1}}$ (or ${\E}_{\lambda_{1}}{\E}_{\lambda_{2}}...{\E}_{\lambda_{r}}\bo_{(k,N-k)}$ ) and the endomorphism algebra $\End(\bo_{(k,N-k)}{\F}_{\lambda_{r}}{\F}_{\lambda_{r-1}}...{\F}_{\lambda_{1}})$ .

Since we construct the categorical action by using the language of FM transformation, the functors ${\E}_{r}\bo_{(k,N-k)}$ and $\bo_{(k,N-k)}{\F}_{s}$ are FM transformations with kernels $\Ee_{r}\bo_{(k,N-k)}$ and $\bo_{(k,N-k)}\Ff_{s}$ respectively in Definition \ref{definition 3}. 

By Proposition \ref{Proposition 2}, the composition of FM transformations is also a FM transformation, thus the functor $\bo_{(k,N-k)}{\F}_{\lambda_{r}}{\F}_{\lambda_{r-1}}...{\F}_{\lambda_{1}}$ is a FM transformations with kernel $\bo_{(k,N-k)}{\Ff}_{\lambda_{r}} \ast {\Ff}_{\lambda_{r-1}} \ast ...\ast {\Ff}_{\lambda_{1}}$, which is an object in $\Dd^b(\GG(k-r,N) \times \GG(k,N))$. 


To know what exactly is the kernel $\bo_{(k,N-k)} {\Ff}_{\lambda_{r}} \ast {\Ff}_{\lambda_{r-1}} \ast ...\ast {\Ff}_{\lambda_{1}}$ in $\Dd^b(\GG(k-r,N) \times \GG(k,N))$, we use the correspondence
\[
\xymatrix{
	&& Fl(k-r,k-r+1,...,k) \ar[d]^{\pi}  \\
	&& Fl(k-r,k)  \ar[ld]_{q_1} \ar[rd]^{q_2}  \\
	& \GG(k-r,N)  
	&& \GG(k,N)  }
\] where 
\begin{align*}
	&Fl(k-r,k-r+1,...,k)=\{0 \subset V_{k-r} \subset V_{k-r+1} \subset ...\subset V_{k} \subset \CC^N \ | \ \dim V_{i} =i\} \\
	&Fl(k-r,k)=\{0 \subset V_{k-r}\subset V_{k} \subset \CC^N \ | \ \dim V_{i} =i \}
\end{align*} and $\pi$, $q_{1}$, $q_{2}$ are natural projections.

Denote $\V_{i}$ to be the tautological bundle on $Fl(k-r,k-r+1,...,k)$ of rank $i$. Let $\iota_{1}:Fl(k-r,k-r+1,...,k) \hookrightarrow \GG(k-r,N) \times ... \times \GG(k,N)$ and $\iota_{2}: Fl(k-r,k) \hookrightarrow \GG(k-r,N) \times \GG(k,N)$ be the natural inclusions. Also, let $\pi':\GG(k-r,N) \times ... \times \GG(k,N) \rightarrow \GG(k-r,N) \times \GG(k,N)$ be the natural projection. Then we have the following commutative diagram

\begin{equation*} 
	\begin{tikzcd}[column sep=large]
		Fl(k-r,k-r+1,...,k)   \arrow[r, "\iota_{1}"]  \arrow[d, "\pi"] &\GG(k-r,N)\times ... \times \GG(k,N) \arrow[d, "\pi'"]\\
		Fl(k-r,k)  \arrow[r, "\iota_{2}"] & \GG(k-r,N)\times \GG(k,N)
	\end{tikzcd}
\end{equation*}

Thus, by flat base change, we have  
\begin{align*}
	\bo_{(k,N-k)} {\Ff}_{\lambda_{r}} \ast {\Ff}_{\lambda_{r-1}} \ast ...\ast {\Ff}_{\lambda_{1}} 
	&\cong \pi'_{*}\iota_{1*}((\V_{k-r+1}/\V_{k-r})^{\lambda_{1}} \otimes ... \otimes (\V_{k}/\V_{k-1})^{\lambda_{r}}) \\
	&\cong \iota_{2*}\pi_{*}((\V_{k-r+1}/\V_{k-r})^{\lambda_{1}} \otimes ... \otimes (\V_{k}/\V_{k-1})^{\lambda_{r}}) 
\end{align*}

We also have the following fibred product diagram
\begin{equation*} 
	\begin{tikzcd}[column sep=large]
		Fl(r)   \arrow[r]  \arrow[d] & Fl(k-r,k-r+1,...,k) \arrow[d,"\pi"]\\
		pt  \arrow[r] & Fl(k-r,k)
	\end{tikzcd}
\end{equation*} this tells us that each fibre of $\pi$ is a full flag variety isomorphic to $Fl(r)$. Since each $\V_{i}/\V_{i-1}$ is a line bundle, so is $(\V_{k-r+1}/\V_{k-r})^{\lambda_{1}} \otimes ... \otimes (\V_{k}/\V_{k-1})^{\lambda_{r}}$. Thus to calculate $\pi_{*}((\V_{k-r+1}/\V_{k-r})^{\lambda_{1}} \otimes ... \otimes (\V_{k}/\V_{k-1})^{\lambda_{r}})$, we have to calculate the family of sheaf cohomology of the line bundle $(\V_{k-r+1}/\V_{k-r})^{\lambda_{1}} \otimes ... \otimes (\V_{k}/\V_{k-1})^{\lambda_{r}}$ along each fibre of $\pi$.

The first thing we want to do is to relate the line bundle $L_{1}^{\lambda_1}\otimes...\otimes L_{r}^{\lambda_r}$ with he equivariant line bundle $L_{\lambda}$ on $Fl(r)$. Here $L_{i}=\V_{i}/\V_{i-1}$ and $\V_{i}$ are the tautological bundles of rank $i$ on $Fl(r)$.

\begin{lemma} \label{lemma 2} 
	$L_{1}^{\lambda_1}\otimes...\otimes L_{r}^{\lambda_r} \cong L_{\lambda}$ as equivariant line bundles on $Fl(r)$ with $\lambda=(\lambda_1,...,\lambda_r)$.
\end{lemma}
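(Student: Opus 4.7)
The plan is to identify each line bundle $L_i = \V_i/\V_{i-1}$ as an associated bundle $G \times_B \CC_{\epsilon_i}$ for a specific character $\epsilon_i$, and then use multiplicativity of the construction $\mu \mapsto L_\mu$ under tensor product.

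First I would fix the base point $eB \in G/B = Fl(r)$ corresponding to the standard flag $V_i^{\mathrm{std}} = \mathrm{span}(e_1,\dots,e_i)$, so that the fiber of $\V_i$ over $eB$ is $V_i^{\mathrm{std}}$ and the fiber of $L_i$ over $eB$ is the line $V_i^{\mathrm{std}}/V_{i-1}^{\mathrm{std}}$, spanned by the class of $e_i$. Next I would compute the $B$-action on this fiber: for $b = tu \in B$ with $t = \mathrm{diag}(a_1,\dots,a_r)$ and $u \in U$, the unipotent part $u$ preserves $V_{i-1}^{\mathrm{std}}$ and sends $e_i$ to $e_i + (\text{something in } V_{i-1}^{\mathrm{std}})$, so it acts trivially modulo $V_{i-1}^{\mathrm{std}}$, while $t$ scales $e_i$ by $a_i$. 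Hence $B$ acts on the fiber of $L_i$ over $eB$ by the character $\epsilon_i : \mathrm{diag}(a_1,\dots,a_r) \mapsto a_i$, i.e. $\epsilon_i = (0,\dots,0,1,0,\dots,0)$ with the $1$ in position $i$. Standard descent (or the Borel--Weil equivalence between $B$-equivariant vector bundles on a point and $G$-equivariant vector bundles on $G/B$) then identifies $L_i \cong L_{\epsilon_i}$ as $G$-equivariant line bundles.

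Finally, the assignment $\mu \mapsto L_\mu = G \times_B \CC_\mu$ converts tensor products into sums of characters (because $\CC_\mu \otimes \CC_\nu = \CC_{\mu+\nu}$ as one-dimensional $B$-modules), giving $L_\mu^{\otimes n} \cong L_{n\mu}$ and $L_\mu \otimes L_\nu \cong L_{\mu+\nu}$. Applying this to
\[
L_1^{\lambda_1} \otimes \cdots \otimes L_r^{\lambda_r} \cong L_{\epsilon_1}^{\lambda_1} \otimes \cdots \otimes L_{\epsilon_r}^{\lambda_r} \cong L_{\lambda_1 \epsilon_1 + \cdots + \lambda_r \epsilon_r} = L_\lambda
\]
yields the claim. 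There is no serious obstacle; the only point that needs care is checking that the unipotent part $U$ of $B$ acts trivially on the fiber of $L_i$, which follows because $U$ preserves the standard flag and shifts $e_i$ only by vectors in $V_{i-1}^{\mathrm{std}}$.
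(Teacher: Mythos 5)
Your proof is correct, and it packages the argument a bit differently from the paper. The paper proves the lemma by writing down an explicit fiberwise map for the full tensor product at once: over a flag $F=gB$ it sends $(g,t)B\in (L_\lambda)_{gB}$ to $t\,(ge_1)^{\lambda_1}\otimes\cdots\otimes(ge_r)^{\lambda_r}$ and then checks directly that this formula is independent of the representative, i.e. invariant under $(g,t)\mapsto (gb^{-1},\lambda(b)t)$ for upper triangular $b$; the cancellation of the diagonal entries $a_{ii}^{\pm\lambda_i}$ in that computation is exactly the character calculation you perform. You instead reduce to the single line bundles: you compute the $B$-character on the fiber of $L_i=\V_i/\V_{i-1}$ at the base point $eB$ (the key point, which you correctly isolate, being that the unipotent radical $U$ moves $e_i$ only by elements of $V_{i-1}^{\mathrm{std}}$ and hence acts trivially on the quotient line, while the torus acts by $a_i$), identify $L_i\cong L_{\epsilon_i}$ via the equivalence between characters of $B$ and $G$-equivariant line bundles on $G/B$, and then conclude by multiplicativity $L_\mu\otimes L_\nu\cong L_{\mu+\nu}$, which also takes care of negative exponents. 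Note that with the paper's convention $b\cdot(g,t)=(gb^{-1},\lambda(b)t)$ the fiber of $L_\lambda$ at $eB$ indeed carries the character $\lambda$ (since $b\cdot(e,t)B=(e,\lambda(b)t)B$), so your identification is consistent with the paper's normalization. The trade-off: the paper's argument is self-contained and purely explicit, while yours leans on the standard associated-bundle formalism but is more structural and makes the role of the character $\epsilon_i$ and the additivity in $\lambda$ transparent.
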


\begin{proof}
	We show that the two line bundles are isomorphic by showing that their fibres are isomorphic.
	
	Considering the standard flag $(0 \subset \CC<e_1> \subset \CC<e_1,e_2> \subset .... \subset \CC<e_1,..,e_r>)$, we have any flag $F$ is of the form
	\[
	F=(0 \subset \CC<ge_1> \subset \CC<ge_1,ge_2> \subset ... \subset \CC<ge_1,...,ge_r>)
	\] for some $g \in G$. The fibre of $L_{1}^{\lambda_1}\otimes...\otimes L_{r}^{\lambda_r}$ over $F$ is 
	\[
	(L_{1}^{\lambda_1}\otimes...\otimes L_{r}^{\lambda_r})_{F} = \CC<(ge_1)^{\lambda_1} \otimes ... \otimes (ge_{r})^{\lambda_r}>.
	\]
	
	On the other hand, the flag $F$ corresponds to the element $gB$ in $G/B$ under the isomorphism $Fl(r) \cong G/B$, we have the fibre of $L_{\lambda}$ over $gB$ is 
	\[
	(L_{\lambda})_{gB}=\{(g,t)B|t\in \CC \}.
	\]
	
	Defining the map $\phi:\{(g,t)B|t\in \CC \} \rightarrow \CC<(ge_1)^{\lambda_1} \otimes ... \otimes (ge_{r})^{\lambda_r}>$ by $\phi((g,t)B)=t (ge_1)^{\lambda_1} \otimes ... \otimes (ge_{r})^{\lambda_r}$ for all $t \in \CC$. Then clearly this is an isomorphism of vector spaces. We have to check that this map is well-defined.
	
	For any upper triangular matrix $b=(a_{ij}) \in B$, we have 
	\begin{align*}
		\phi( (gb^{-1},\lambda(b)t)B)=\phi((gb^{-1},a_{11}^{\lambda_1}...a_{rr}^{\lambda_r}t)B) &=a_{11}^{\lambda_1}...a_{rr}^{\lambda_r}t(gb^{-1}e_{1})^{\lambda_1}...(gb^{-1}e_{r})^{\lambda_r} \\
		&=a_{11}^{\lambda_1}...a_{rr}^{\lambda_r}ta_{11}^{-\lambda_1}...a_{rr}^{-\lambda_r}(ge_1)^{\lambda_1} \otimes ... \otimes (ge_{r})^{\lambda_r} \\
		&=t(ge_1)^{\lambda_1} \otimes ... \otimes (ge_{r})^{\lambda_r} \\
		&=\phi( (g,t)B)
	\end{align*}
	
	It is easy to see that $L_{1}^{\lambda_1}\otimes...\otimes L_{r}^{\lambda_r}$ is a $G$-equivariant line bundle. Thus we have $L_{1}^{\lambda_1}\otimes...\otimes L_{r}^{\lambda_r} \cong L_{\lambda}$.
\end{proof}

Thus if we apply the Borel-Weil-Bott theorem to each fibre of $\pi$, we would get $\bo_{(k,N-k)} {\Ff}_{\lambda_{r}} \ast {\Ff}_{\lambda_{r-1}} \ast ...\ast {\Ff}_{\lambda_{1}}$ as a family of irreducible representations of $G$ of certain highest weight. We have the following theorem, which is a direct generalization of Theorem \ref{Theorem 4}.

\begin{theorem} \label{Theorem 5}
	Let $\lambda=(\lambda_{1},...,\lambda_{r})$. Then we have the following isomorphism of FM kernels
	\begin{align*}
		&\bo_{(k,N-k)} {\Ff}_{\lambda_{r}} \ast {\Ff}_{\lambda_{r-1}} \ast ...\ast {\Ff}_{\lambda_{1}} \\
		&\cong \begin{cases}
			\iota_{2*}(\s_{-\lambda}\V_{k}/\V_{k-r})^{\vee} & if -\lambda\ is \ dominant \\
			0 & if -\lambda \  \text{is  not  dominant  and} \ \nexists \ w \in W \ s.t. \ w \star (-\lambda) \ is \ dominant \\
			\iota_{2*}(\s_{w \star (-\lambda)}\V_{k}/\V_{k-r})^{\vee}[-l(w)] & if -\lambda \ is \ not \ dominant \ and \ \exists!  \ w \in W \ s.t. \ w \star (-\lambda) \ is \ dominant   
		\end{cases}
	\end{align*}
\end{theorem}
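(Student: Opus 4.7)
The plan is to reduce the statement to a fibrewise application of the classical Borel--Weil--Bott theorem (Theorem \ref{Theorem 4}) via cohomology and base change. As the discussion preceding the statement already records, Proposition \ref{Proposition 2} combined with flat base change applied to the cartesian square relating $Fl(k-r,k-r+1,\dots,k)$ to $\GG(k-r,N)\times\cdots\times\GG(k,N)$ yields
\begin{equation*}
\bo_{(k,N-k)} {\Ff}_{\lambda_{r}} \ast \cdots \ast {\Ff}_{\lambda_{1}} \cong \iota_{2*}\,\pi_{*}\bigl((\V_{k-r+1}/\V_{k-r})^{\lambda_{1}} \otimes \cdots \otimes (\V_{k}/\V_{k-1})^{\lambda_{r}}\bigr).
\end{equation*}
So the entire calculation reduces to computing $R\pi_{*}L$, where $L := \bigotimes_{i=1}^{r}(\V_{k-r+i}/\V_{k-r+i-1})^{\lambda_{i}}$ and $\pi$ is the relative full flag bundle of the rank-$r$ quotient $\V_{k}/\V_{k-r}$ on $Fl(k-r,k)$.

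Each fibre of $\pi$ over $(V_{k-r}\subset V_k)$ is canonically $Fl(V_{k}/V_{k-r})\cong Fl(r)$, and by Lemma \ref{lemma 2} the restriction of $L$ to this fibre is exactly the $G$-equivariant line bundle $L_{\lambda}$ attached to $\lambda=(\lambda_{1},\dots,\lambda_{r})$, where $G = \GLL_{r}$. Theorem \ref{Theorem 4} then evaluates $H^{*}(Fl(r),L_{\lambda})$ in each of the three Borel--Weil--Bott cases: the cohomology is concentrated in a single degree (either $0$ or $l(w)$), with dimension constant across the base (equal to $\dim \s_{-\lambda}\CC^{r}$, $\dim \s_{w\star(-\lambda)}\CC^{r}$, or $0$). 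Since $\pi$ is smooth and proper and the fibrewise dimensions are constant, cohomology and base change (via semicontinuity and Grauert) lifts this to the assertion that $R^{i}\pi_{*}L$ is locally free on $Fl(k-r,k)$, concentrated in the expected single degree, with rank matching the fibrewise answer.

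Finally, to identify the pushforward with a Schur bundle, I would invoke the associated-bundle construction: $\pi$ realises $Fl(k-r,k-r+1,\dots,k)$ as the $\GLL_{r}$-flag bundle of $\V_{k}/\V_{k-r}$, and for any $\GLL_{r}$-representation of highest weight $\mu$ the induced vector bundle on $Fl(k-r,k)$ is $\s_{\mu}(\V_{k}/\V_{k-r})$. Combined with Theorem \ref{Theorem 4}, which describes the fibre as the dual of the irreducible representation of highest weight $-\lambda$ or $w\star(-\lambda)$, this gives $\pi_{*}L \cong (\s_{-\lambda}(\V_{k}/\V_{k-r}))^{\vee}$ in the dominant case and the analogous shifted dual Schur bundle in the singular dominant case. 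Applying $\iota_{2*}$ produces the three cases stated. The main obstacle is the cohomology-and-base-change step, where one must check that the fibrewise Borel--Weil--Bott dimensions really give the rank of the derived pushforward as a sheaf on $Fl(k-r,k)$; this works cleanly here because $\pi$ is smooth and proper over $\CC$ and the fibrewise dimensions are constant in each Borel--Weil--Bott case, so no characteristic-dependent subtleties intervene.
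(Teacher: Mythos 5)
Your proposal is correct and follows essentially the same route as the paper: after the flat base change reduction, the paper likewise applies the Borel--Weil--Bott theorem fibrewise via Lemma \ref{lemma 2} and presents Theorem \ref{Theorem 5} as the direct relative generalization of Theorem \ref{Theorem 4}, offering only a Grothendieck--Verdier duality consistency check in the subsequent remark. The cohomology-and-base-change (Grauert) step and the associated-bundle identification of $\pi_{*}L$ with the dual Schur bundle that you spell out are precisely the globalization details the paper leaves implicit, so your argument does not diverge from the paper's approach.
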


\begin{remark}
	We can use the Grothendieck-Veridier duality to verify the above theorem. Define $\LL_{\lambda}:=(\V_{k-r+1}/\V_{k-r})^{\lambda_{1}} \otimes ... \otimes (\V_{k}/\V_{k-1})^{\lambda_{r}}$ for $\lambda=(\lambda_{1},...,\lambda_{r})$. Since we have to calculate $\pi_{*}\LL_{\lambda}$, by Grothendieck-Verdier duality, we have 
	\[
	\DD_{Fl(k-r,k)}\pi_{*}\LL_{\lambda} \cong \pi_{*}\DD_{Fl(k-r,k-r+1,...,k)}\LL_{\lambda}
	\] after simplification we obtain the following 
	\begin{equation} \label{eq 31}
		(\pi_{*}\LL_{\lambda})^{\vee} \cong \pi_{*}\LL_{2\rho-\lambda} \otimes \det(\V_{k}/\V_{k-r})^{-r+1}[l(w_{0})].
	\end{equation}
	
	If $-\lambda=(-\lambda_{1},...,-\lambda_{r})$ is dominant, then we have $-\lambda_{1} \geq -\lambda_{2} \geq ... \geq -\lambda_{r}$, so $\lambda_{1} \leq \lambda_{2} \leq .. \leq \lambda_{r}$. By Theorem \ref{Theorem 5}, we have $\pi_{*}\LL_{\lambda} \cong (\s_{-\lambda}\V_{k}/\V_{k-r})^{\vee}$ and thus $(\pi_{*}\LL_{\lambda})^{\vee} \cong \s_{-\lambda}\V_{k}/\V_{k-r}$.
	
	On the other hand, since $-\lambda$ is dominant, we have $2\rho-\lambda=(2r-2-\lambda_{1},2r-4-\lambda_{2},...,-\lambda_{r})$ is also dominant, which means that $-2\rho+\lambda$ is not dominant. Choose $w=w_{0}$ to be the longest element in the Weyl group, then 
	\begin{align*}
		w_{0} \star (-2\rho+\lambda) &= w_{0}(-\rho+\lambda)-\rho=w_{0}(\lambda_{1}-r+1,...,\lambda_{r})-\rho \\
		&= (\lambda_{r},...,\lambda_{1}-r+1)-\rho = (\lambda_{r}-r+1,...,\lambda_{1}-r+1)=w_{0}\lambda-r+1
	\end{align*} which is dominant.
	
	So from  Theorem \ref{Theorem 5}, we have
	\begin{align*}
		\pi_{*}\LL_{2\rho-\lambda}  &\cong (\s_{w \star (-2\rho+\lambda)}\V_{k}/\V_{k-r})^{\vee}[-l(w_{0})]  \cong \s_{-w_{0}(w \star (-2\rho+\lambda))}\V_{k}/\V_{k-r}[-l(w_{0})] \\
		& \cong \s_{-w_{0}(w_{0}\lambda-r+1)}\V_{k}/\V_{k-r}[-l(w_{0})]  \cong  \s_{-\lambda+r-1}\V_{k}/\V_{k-r}[-l(w_{0})] \\
		& \cong \s_{-\lambda}\V_{k}/\V_{k-r} \otimes \det(\V_{k}/\V_{k-r})^{r-1}[-l(w_{0})].
	\end{align*}
	
	This implies that
	\[
	\pi_{*}\LL_{2\rho-\lambda} \otimes \det(\V_{k}/\V_{k-r})^{-r+1}[l(w_{0})] \cong \s_{-\lambda}\V_{k}/\V_{k-r} \cong (\pi_{*}\LL_{\lambda})^{\vee} 
	\] which verify the theorem. For other cases are similar.
\end{remark}

Observe that $\GG(k,N)=\GG(0,N) \times \GG(k,N) $. We can view $\s_{\lambda}\V$ as an object in $\Dd^b(\GG(0,N) \times \GG(k,N))$. So we can use Theorem \ref{Theorem 5} to write $\s_{\lambda}\V$ in terms of convolutions of the kernels $\Ff_{\lambda_{i}}$. More precisely, we have the following corollary.

\begin{corollary} \label{Corollary 6}
	Let $\lambda=(\lambda_{1},...,\lambda_{k})$ with $0 \leq \lambda_{k} \leq ... \leq \lambda_{1} \leq N-k$ and  $\mu=(\mu_{1},...,\mu_{N-k})$ with $0 \leq \mu_{N-k} \leq  ... \leq \mu_{2} \leq \mu_{1} \leq k$.. Then 
	\begin{align*}
	& \s_{\lambda}\V 
		\cong \Ff_{\lambda_{1}} \ast ... \ast \Ff_{\lambda_{k}} \bo_{(0,N)} \in \Dd^b(\GG(0,N) \times \GG(k,N))  \\
	& \s_{\mu}(\CC^N/\V)^{\vee}
		\cong \Ee_{-\mu_{1}} \ast ... \ast \Ee_{-\mu_{N-k}} \bo_{(N,0)} \in \Dd^b(\GG(N,N) \times \GG(k,N))
	\end{align*}
\end{corollary}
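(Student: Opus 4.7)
The plan is to deduce both isomorphisms as direct specializations of Theorem \ref{Theorem 5} (and its evident $\E$-analog) in which one of the endpoint Grassmannians is a point.

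For the first isomorphism, I apply Theorem \ref{Theorem 5} with $r=k$ and starting weight category $\GG(0,N)$. In that case $Fl(k-r,k)=Fl(0,k)$ collapses to $\GG(k,N)$; the inclusion $\iota_{2}$ becomes an isomorphism after identifying $\GG(0,N)\times\GG(k,N)\cong\GG(k,N)$, and $\V_{k}/\V_{k-r}$ is simply the tautological bundle $\V$. The only subtlety is to match indexing conventions: Theorem \ref{Theorem 5} pairs $\F_{\lambda_{r}}\ast\cdots\ast\F_{\lambda_{1}}$ with a weight $(\lambda_{1},\dots,\lambda_{r})$, whereas the corollary writes the convolution as $\F_{\lambda_{1}}\ast\cdots\ast\F_{\lambda_{k}}$. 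So I would re-index by setting $\mu_{i}:=\lambda_{k+1-i}$; the hypothesis $\lambda_{1}\geq\cdots\geq\lambda_{k}\geq 0$ then translates into $\mu_{1}\leq\cdots\leq\mu_{k}$, i.e.\ $-\mu$ is dominant, placing us in the first case of Theorem \ref{Theorem 5}. This gives
\[
\F_{\lambda_{1}}\ast\cdots\ast\F_{\lambda_{k}}\bo_{(0,N)}=\F_{\mu_{k}}\ast\cdots\ast\F_{\mu_{1}}\bo_{(0,N)}\cong \iota_{2*}(\s_{-\mu}\V)^{\vee}.
\]
To finish, I would invoke the standard duality for irreducible $\GLL_{k}$-representations: for any dominant weight $\alpha=(\alpha_{1}\geq\cdots\geq\alpha_{k})$ and rank-$k$ bundle $W$, one has $(\s_{\alpha}W)^{\vee}\cong \s_{(-\alpha_{k},\dots,-\alpha_{1})}W$. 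Applied with $\alpha=-\mu$, this identifies $(\s_{-\mu}\V)^{\vee}$ with $\s_{\lambda}\V$, completing the first isomorphism.

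For the second isomorphism, the argument is entirely parallel but uses the $\E$-analog of Theorem \ref{Theorem 5}. That analog is proved by exactly the same base-change and Borel-Weil-Bott computation, since the kernels $\Ee_{r}\bo_{(k,N-k)}$ are supported on the very same correspondences as the $\Ff$-kernels, merely viewed on the transposed product of Grassmannians. I take the starting weight category to be $\GG(N,N)$ (a point), $r=N-k$, and weight $\alpha=(-\mu_{1},\dots,-\mu_{N-k})$. The hypothesis $0\leq\mu_{N-k}\leq\cdots\leq\mu_{1}\leq k$ says exactly that $-\alpha=(\mu_{1},\dots,\mu_{N-k})$ is dominant, so the first case of Theorem \ref{Theorem 5} applies; here the relevant quotient bundle is $\V_{N}/\V_{k}\cong\CC^{N}/\V$ of rank $N-k$, and again the relevant inclusion $Fl(k,N)\cong\GG(k,N)\hookrightarrow\GG(N,N)\times\GG(k,N)$ is an isomorphism under the natural identification. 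This produces $\s_{\mu}(\CC^{N}/\V)^{\vee}$ directly.

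The main obstacle is not conceptual but purely combinatorial bookkeeping: matching the order of factors in the convolutions against the order of entries in the dominant weights produced by Theorem \ref{Theorem 5}, and carefully applying the Schur-functor duality that both reverses and negates indices. Once these conventions are aligned, the corollary is an immediate specialization of the Borel-Weil-Bott computation already encoded in Theorem \ref{Theorem 5}.
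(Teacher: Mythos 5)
Your proposal is correct and matches the paper's route: the paper offers no separate argument, treating Corollary \ref{Corollary 6} as the immediate specialization of Theorem \ref{Theorem 5} (and its evident $\E$-analog) to the case where one end of the correspondence is the point $\GG(0,N)$ or $\GG(N,N)$, exactly as you do. Your extra care with the reversal of indices and the Schur-functor duality $(\s_{\alpha}W)^{\vee}\cong\s_{(-\alpha_{k},\dots,-\alpha_{1})}W$ is precisely the bookkeeping the paper leaves implicit, and it is carried out correctly.
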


\subsection{The main theorem}

We state the main theorem, which is similar to Theorem 4.1 in \cite{CKo}.

\begin{theorem} \label{Theorem 7}
Given a categorical action of $\dot{\Uu}_{0,N}(L\SL_{2})$ on $\bigoplus_{k} \Kk(k,N-k)$. Suppose that for each $k \geq 1$, the image of the functor 
\begin{equation*}
\varphi_{k}:=  \bigoplus_{\lambda \in P(N-k,k)} {\F}_{\lambda} {\bo}_{(0,N)} :\Kk(0,N) \rightarrow \Kk(k,N-k)
\end{equation*}
weakly generate $\Kk(k,N-k)$, where ${\F}_{\lambda} {\bo}_{(0,N)}:= {\F}_{\lambda_{1}}{\F}_{\lambda_{2}}...{\F}_{\lambda_{k}}{\bo}_{(0,N)}$ for $\lambda=(\lambda_{1},...,\lambda_{k}) \in P(N-k,k)$. Moreover, Suppose that $\Kk(0,N)$ is equipped with a t-structure $(\Kk(0,N)^{\leq 0},\Kk(0,N)^{\geq 0})$ such that $({\F}_{\lambda})^{R}{\F}_{\lambda'}{\bo}_{(0,N)}$ is t-exact for all $\lambda, \lambda' \in P(N-k,k)$.  

Then one can uniquely extend this t-structure to all other categories $\Kk(k,N-k)$ so that $({\F}_{N-k-1}\bo_{(k,N-k)})^{R}$ and $({\E}\bo_{(k,N-k)})^{R}({\spi}^{-})[k-2]$ are t-exact.
\end{theorem}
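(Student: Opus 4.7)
The plan is to construct the t-structure via Polishchuk's theorem (Theorem~\ref{Theorem 8}) applied to $\varphi_k^R: \Kk(k,N-k) \to \Kk(0,N)$ with left adjoint $\varphi_k$. The hypothesis that the image of $\varphi_k$ weakly generates $\Kk(k,N-k)$ implies, via the adjunction $\Hom(\varphi_k(Y),X) \cong \Hom(Y,\varphi_k^R(X))$, that $\varphi_k^R$ is conservative. The assumption that $({\F}_\lambda)^R {\F}_{\lambda'}\bo_{(0,N)}$ is t-exact for all $\lambda,\lambda' \in P(N-k,k)$ means $\varphi_k^R \circ \varphi_k = \bigoplus_{\lambda,\lambda'}({\F}_\lambda)^R{\F}_{\lambda'}\bo_{(0,N)}$ is t-exact, hence in particular right t-exact. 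Theorem~\ref{Theorem 8} then yields a unique t-structure on $\Kk(k,N-k)$ such that $\varphi_k^R$ is t-exact, characterized by $\Kk(k,N-k)^{\geq 0} = \{X : \varphi_k^R(X) \in \Kk(0,N)^{\geq 0}\}$ (and symmetrically for $\leq 0$ by Lemma~\ref{lemma 8}).

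With these t-structures fixed, the two t-exactness assertions reduce, via Lemma~\ref{lemma 8} applied to the conservative t-exact functor $\varphi_k^R$, to showing that $\varphi_k^R \circ ({\F}_{N-k-1}\bo_{(k,N-k)})^R \cong ({\F}_{N-k-1}\varphi_k)^R$ and $\varphi_k^R \circ ({\E}\bo_{(k,N-k)})^R({\spi}^-)[k-2] \cong ({\E}\varphi_k)^R({\spi}^-)[k-2]$ are t-exact as functors landing in $\Kk(0,N)$. The strategy is to identify each of these, up to the invertible twist by ${\spi}^-$ and a shift, with a direct summand of $\varphi_{k+1}^R$ or $\varphi_{k-1}^R$, which are t-exact by the first paragraph; t-exactness of direct summands then follows from Lemma~\ref{lemma 9}.

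For $({\F}_{N-k-1}\varphi_k)^R$, expand $\F_{N-k-1}\varphi_k = \bigoplus_{\lambda \in P(N-k,k)} \F_{N-k-1}\F_{\lambda_1}\cdots\F_{\lambda_k}\bo_{(0,N)}$ and apply relation (vii) of Definition~\ref{definition 2} to the outermost pair $\F_{N-k-1}\F_{\lambda_1}$: the $r=s$ clause shows that summands with $\lambda_1 = N-k$ vanish, while for $\lambda_1 \leq N-k-1$ the tuple $\mu := (N-k-1,\lambda_1,\dots,\lambda_k)$ is already a valid partition in $P(N-k-1,k+1)$ with $\mu_1 = N-k-1$. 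Hence $\F_{N-k-1}\varphi_k$ is precisely the direct summand of $\varphi_{k+1}$ indexed by $\{\mu \in P(N-k-1,k+1) : \mu_1 = N-k-1\}$, giving the desired summand of $\varphi_{k+1}^R$ after taking right adjoints. For $({\E}\varphi_k)^R({\spi}^-)[k-2]$, compute $\E\varphi_k$ by moving $\E$ through each $\F_{\lambda_j}$ using relation (xiii): at the intermediate weights $(k-j,N-k+j)$ for $j<k$ the parameters $r=0$, $s=\lambda_j$ fall into case (c) (pure commutation), while at the innermost weight $(0,N)$ the triangle of case (c) kills everything with $\lambda_k \geq 1$, and case (d) with $\E\bo_{(0,N)}=0$ forces $\E\F_0\bo_{(0,N)} \cong {\spi}^-\bo_{(0,N)}[-1]$. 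Then $k-1$ applications of relation (x) to commute each outer $\F_{\lambda_i}$ past ${\spi}^-$ each pick up a shift $[1]$ and shift the subscript by $+1$, yielding the clean identification
\begin{equation*}
\E\varphi_k \;\cong\; {\spi}^- \varphi'_{k-1}[k-2], \qquad \varphi'_{k-1} := \bigoplus_{\substack{\mu \in P(N-k+1,k-1) \\ \mu_{k-1} \geq 1}} \F_\mu \bo_{(0,N)}.
\end{equation*}
Taking right adjoints, using $({\spi}^-)^R = ({\spi}^-)^{-1}$ (from relation U3) and the centrality of shifts, a direct computation collapses $({\E}\varphi_k)^R({\spi}^-)[k-2]$ to $(\varphi'_{k-1})^R$, a direct summand of $\varphi_{k-1}^R$, hence t-exact.

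The main technical obstacle will be the third paragraph: carefully tracking the exact triangles from relation (xiii) at each successive weight, correctly identifying the case (c)/(d) dichotomy at the innermost step, and marshaling the shifts accumulated from relation (x) so that the net shift agrees exactly with $[k-2]$. The homological shift $[k-2]$ in the statement is precisely calibrated to absorb the $k-1$ shifts picked up in commuting ${\spi}^-$ outward, net of the initial $[-1]$ from the boundary case. Uniqueness of the extension follows from the uniqueness clause of Theorem~\ref{Theorem 8} once one checks that any t-structure rendering the two families of functors t-exact must make $\varphi_k^R$ t-exact as well, which follows from expressing $\varphi_k$ inductively in terms of the functors $\F$ and the given relations.
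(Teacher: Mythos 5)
Your proposal is correct and is essentially the paper's own argument: apply Polishchuk's Theorem \ref{Theorem 8} to $(\varphi_{k})^{R}$ (conservative because the image of $\varphi_{k}$ weakly generates, with $(\varphi_{k})^{R}\varphi_{k}$ t-exact by hypothesis), and then verify the two t-exactness claims by composing with $(\varphi_{k})^{R}$, using the $\F\F$ and $\E\F$/$\spi^{-}\F$ relations of Definition \ref{definition 2} to identify the composite with a direct summand of the neighbouring $(\varphi_{k\pm 1})^{R}$ and concluding with Lemma \ref{lemma 9}. The only discrepancies are cosmetic: your sub-case citations are off by one (pure commutation is case (13)(e), the triangle giving $\E\F_{0}\bo_{(0,N)}\cong\spi^{-}\bo_{(0,N)}[-1]$ is case (13)(c), and the $\F\F$ vanishing is relation (8)), and you index the $\F$-part as a map $\Kk(k,N-k)\to\Kk(k+1,N-k-1)$ where the paper re-indexes it as $\Kk(k,N-k)\to\Kk(k-1,N-k+1)$.
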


\begin{proof} 
Consider the right adjoint functor 

\begin{equation}\label{eq 32} 
(\varphi_{k})^{R}:=  \bigoplus_{\lambda \in P(N-k,k)} ({\F}_{\lambda} {\bo}_{(0,N)})^{R} :\Kk(k,N-k) \rightarrow \Kk(0,N).
\end{equation}

Then we have 
\begin{equation*}
(\varphi_{k})^{R} \circ (\varphi_{k}) = \bigoplus_{\lambda, \lambda' \in P(N-k,k)} ({\F}_{\lambda})^{R}{\F}_{\lambda'}{\bo}_{(0,N)}:\Kk(0,N) \rightarrow \Kk(0,N).
\end{equation*}

By the assumption, $({\F}_{\lambda})^{R}{\F}_{\lambda'}{\bo}_{(0,N)}$ is t-exact for all $\lambda, \lambda' \in P(N-k,k)$, so we get that $(\varphi_{k})^{R} \circ (\varphi_{k})$ is also t-exact. Moreover, since the image of $\varphi_{k}$ weakly generate $\Kk(k,N-k)$, we have $(\varphi_{k})^{R}$ is conservative.

Applying Theorem \ref{Theorem 8} and Lemma \ref{lemma 8} to the functor $(\varphi_{k})^{R}$, the induced t-structure on $\Kk(k,N-k)$ is given by
\begin{align} \label{eq 42}
	\begin{split}
	& \Kk(k,N-k)^{\leq 0} =\{ X \in \Kk(k,N-k) \ | \ (\varphi_{k})^{R}(X) \in \Kk(0,N)^{\leq 0} \}, \\
	& \Kk(k,N-k)^{\geq 0} =\{ X \in \Kk(k,N-k) \ | \ (\varphi_{k})^{R}(X) \in \Kk(0,N)^{\geq 0} \}.
		\end{split}
\end{align}

Next, we check that the functors $({\F}_{N-k-1}\bo_{(k,N-k)})^{R}$ and $({\E}\bo_{(k,N-k)})^{R}({\spi}^{-})[k-2]$ are t-exact on those induced t-structures. To check this, we have to show that given $X \in \Kk(k,N-k)^{\leq 0}$ and $Y \in \Kk(k-1,N-k+1)^{\leq 0}$, we have $({\F}_{N-k}\bo_{(k-1,N-k+1)})^{R}(X) \in  \Kk(k-1,N-k+1)^{\leq 0}$ and $({\E}\bo_{(k,N-k)})^{R}({\spi}^{-})[k-2](Y) \in  \Kk(k,N-k)^{\leq 0}$. 

To show that $({\F}_{N-k}\bo_{(k-1,N-k+1)})^{R}(X) \in  \Kk(k-1,N-k+1)^{\leq 0}$, we need to calculate $(\varphi_{k-1})^{R} \circ ({\F}_{N-k}\bo_{(k-1,N-k+1)})^{R}$. The definition of $(\varphi_{k-1})^{R} $ is similar to (\ref{eq 32}) with each direct summand $({\F}_{\lambda} {\bo}_{(0,N)})^{R}$,
where $\lambda=(\lambda_{1},..,\lambda_{k-1}) \in P(N-k+1,k-1)$. So we calculate each term of $(\varphi_{k-1})^{R} \circ ({\F}_{N-k}\bo_{(k-1,N-k+1)})^{R}$
\begin{equation*}
({\F}_{\lambda} {\bo}_{(0,N)})^{R}({\F}_{N-k}{\bo}_{(k-1,N-k+1)})^{R} \cong  ({\F}_{N-k}{\F}_{\lambda_{1}}...{\F}_{\lambda_{k-1}} {\bo}_{(0,N)})^{R}
\end{equation*}

Note that $\lambda=(\lambda_{1},..,\lambda_{k-1}) \in P(N-k+1,k-1)$, so $ 0 \leq \lambda_{k-1} \leq ... \leq \lambda_{1} \leq N-k+1$. If $\lambda_{i}=N-k+1$ for some $1 \leq i \leq k-1$, then we have $\lambda_{j}=N-k+1$ for all $j \leq i$ and thus ${\F}_{N-k}{\F}_{\lambda_{1}}...{\F}_{\lambda_{k-1}} {\bo}_{(0,N)} \cong 0$ by relation (8) in Definition \ref{definition 2}. Otherwise, we have $ 0 \leq \lambda_{k-1} \leq ... \leq \lambda_{1} \leq N-k$ and thus $({\F}_{N-k}{\F}_{\lambda_{1}}...{\F}_{\lambda_{k-1}} {\bo}_{(0,N)})^{R} $ is a direct summand of $(\varphi_{k})^{R}$.

The above argument implies that $(\varphi_{k-1})^{R} \circ ({\F}_{N-k}\bo_{(k-1,N-k+1)})^{R}$ can be written as a direct sum whose each summand is a direct summand of  $(\varphi_{k})^{R}$. Now, since $X \in \Kk(k,N-k)^{\leq 0}$, we have $(\varphi_{k})^{R}(X) \in \Kk(0,N)^{\leq 0}$. By Lemma \ref{lemma 9}, we conclude that $(\varphi_{k-1})^{R} \circ ({\F}_{N-k}\bo_{(k-1,N-k+1)})^{R}(X) \in \Kk(k,N-k)^{\leq 0}$, which implies $({\F}_{N-k}\bo_{(k-1,N-k+1)})^{R}(X)  \in \Kk(k-1,N-k+1)^{\leq 0}$.

Next, we show that  $({\E}\bo_{(k,N-k)})^{R}({\spi}^{-})[k-2](Y) \in  \Kk(k,N-k)^{\leq 0}$. Similarly as the case for showing $({\F}_{N-k}\bo_{(k-1,N-k+1)})^{R}(X) \in  \Kk(k-1,N-k+1)^{\leq 0}$, we have to calculate $(\varphi_{k})^{R} \circ ({\E}\bo_{(k,N-k)})^{R}({\spi}^{-})[k-2]$. By (\ref{eq 32}), we calculate each term 
\begin{equation}  \label{eq 41}
	({\F}_{\lambda} {\bo}_{(0,N)})^{R} ({\E}\bo_{(k,N-k)})^{R}({\spi}^{-})[k-2] \cong ({\E}{\F}_{\lambda_{1}}...{\F}_{\lambda_{k}} {\bo}_{(0,N)})^{R}  ({\spi}^{-})[k-2]
\end{equation} where $\lambda=(\lambda_{1},...,\lambda_{k}) \in P(N-k,k)$.

Now $\lambda=(\lambda_{1},..,\lambda_{k}) \in P(N-k,k)$, so $ 0 \leq \lambda_{k} \leq ... \leq \lambda_{1} \leq N-k$. By relation (13)(c)(e) in Definition \ref{definition 2}, we obtain that 
\begin{equation*}
	{\E}{\F}_{\lambda_{1}}...{\F}_{\lambda_{k}} {\bo}_{(0,N)} \cong  ... \cong {\F}_{\lambda_{1}}...{\E}{\F}_{\lambda_{k}} {\bo}_{(0,N)} \cong \begin{cases}
		0  & \text{if} \ 1\leq \lambda_{k} \leq N-k \\
		{\F}_{\lambda_{1}}...{\F}_{\lambda_{k-1}} {\spi}^{-}[-1] {\bo}_{(0,N)}   & \text{if}  \ \lambda_{k}=0
	\end{cases}
\end{equation*}

Hence, in the case where ${\E}{\F}_{\lambda_{1}}...{\F}_{\lambda_{k}} {\bo}_{(0,N)} \ncong 0$, (\ref{eq 41}) becomes
\begin{align*}
&({\E}{\F}_{\lambda_{1}}...{\F}_{\lambda_{k}} {\bo}_{(0,N)})^{R} ({\spi}^{-})[k-2] \\
&\cong ({\F}_{\lambda_{1}}...{\F}_{\lambda_{k-1}} {\spi}^{-}[-1] {\bo}_{(0,N)})^{R}({\spi}^{-})[k-2] \cong ({\F}_{\lambda_{1}}...{\F}_{\lambda_{k-1}} {\spi}^{-}[-1] {\bo}_{(0,N)})^{R}(({\spi}^{-})^{-1})^{R} [k-2] \\
& \cong (({\spi}^{-})^{-1}{\F}_{\lambda_{1}}...{\F}_{\lambda_{k-1}} {\spi}^{-}[-1] {\bo}_{(0,N)})^{R}[k-2] \cong  (({\F}_{\lambda_{1}+1}...{\F}_{\lambda_{k-1}+1} [k-2] {\bo}_{(0,N)})^{R}[k-2] \\
& \cong (({\F}_{\lambda_{1}+1}...{\F}_{\lambda_{k-1}+1}{\bo}_{(0,N)})^{R}
\end{align*} and $ 1 \leq \lambda_{k-1}+1 \leq ... \leq \lambda_{1}+1 \leq N-k+1$, which means that it is a direct summand of $(\varphi_{k-1})^{R}$.

The above argument implies that $(\varphi_{k})^{R} \circ ({\E}\bo_{(k,N-k)})^{R}({\spi}^{-})[k-2]$ can be written as a direct sum whose each summand is a direct summand of  $(\varphi_{k-1})^{R}$. By the same argument as above and using Lemma \ref{lemma 9}, we prove that $({\E}\bo_{(k,N-k)})^{R}({\spi}^{-})[k-2](Y) \in  \Kk(k,N-k)^{\leq 0}$. 

Finally, all the above argument hold for the cases $X \in \Kk(k,N-k)^{\geq 0}$ and $Y \in \Kk(k-1,N-k+1)^{\geq 0}$.
\end{proof}

The next corollary says that we can modify the functors $\varphi_{k}$ to get a different t-structure  so that $({\F}_{N-k-1}\bo_{(k,N-k)})^{L}$ and $({\E}\bo_{(k,N-k)})^{L}({\spi}^{-})[k-2]$ are t-exact.

\begin{corollary} \label{Corollary 8} 
Let  $\bigoplus_{k} \Kk(k,N-k)$ and $\varphi_{k}$ be the categories and functors that satisfy the assumptions in Theorem \ref{Theorem 7}. Consider the functors
\begin{equation*}
\varphi'_{k}:= ({\spi^{-}})^{N-k} ({\spi^{+}})^{k} \bigoplus_{\lambda \in P(N-k,k)} {\F}_{\lambda} {\bo}_{(0,N)} [k(N-k)-N]:\Kk(0,N) \rightarrow \Kk(k,N-k).
\end{equation*}

Then one can use $\varphi'_{k}$ to induce t-structure to all other categories $\Kk(k,N-k)$ as in (\ref{eq 42}) so that $({\F}_{N-k-1}\bo_{(k,N-k)})^{L}$ and $({\E}\bo_{(k,N-k)})^{L}({\spi}^{-})[k-2]$ are t-exact.
\begin{proof}
By definition, we have $\varphi'_{k}=({\spi^{-}})^{N-k} ({\spi^{+}})^{k}\varphi_{k}[k(N-k)-N]$. From the assumption of Theorem \ref{Theorem 7}, we have the image of $\varphi_{k}$ weakly generate $\Kk(k,N-k)$. Since ${\spi^{-}}$, ${\spi^{+}}$ are invertible functors, and the homological shift $[1]$ is also invertible, it is easy to see that the image of $\varphi'_{k}$ weakly generate $\Kk(k,N-k)$.

Consider the right adjoint functor
\begin{equation} 
(\varphi'_{k})^{R}:=  \bigoplus_{\lambda \in P(N-k,k)} ({\F}_{\lambda} {\bo}_{(0,N)})^{R} ({\spi^{-}})^{-N+k} ({\spi^{+}})^{-k} [-k(N-k)+N]:\Kk(k,N-k) \rightarrow \Kk(0,N)
\end{equation} which is conservative from the above argument.

Moreover, we have
\begin{equation*}
	(\varphi'_{k})^{R} \circ (\varphi'_{k}) = \bigoplus_{\lambda, \lambda' \in P(N-k,k)} ({\F}_{\lambda})^{R}{\F}_{\lambda'}{\bo}_{(0,N)}=(\varphi_{k})^{R} \circ (\varphi_{k}):\Kk(0,N) \rightarrow \Kk(0,N).
\end{equation*}	which is also t-exact on the t-structure $(\Kk(0,N)^{\leq 0},\Kk(0,N)^{\geq 0})$ from the assumption.

Thus we can apply Theorem \ref{Theorem 8} and Lemma \ref{lemma 8} to the functor $(\varphi'_{k})^{R}$, the induced t-structure on $\Kk(k,N-k)$ is given by
\begin{align} \label{eq 43}
	\begin{split}
		& \Kk(k,N-k)^{' \leq 0} =\{ X \in \Kk(k,N-k) \ | \ (\varphi'_{k})^{R}(X) \in \Kk(0,N)^{\leq 0} \}, \\
		& \Kk(k,N-k)^{' \geq 0} =\{ X \in \Kk(k,N-k) \ | \ (\varphi'_{k})^{R}(X) \in \Kk(0,N)^{\geq 0} \}.
	\end{split}
\end{align}
	
Next, we check that the functors $({\F}_{N-k-1}\bo_{(k,N-k)})^{L}$ and $({\E}\bo_{(k,N-k)})^{L}({\spi}^{-})[k-2]$ are t-exact on those induced t-structures. Similarly as the proof of Theorem \ref{Theorem 7}, we have to calculate $(\varphi'_{k-1})^{R} \circ ({\F}_{N-k}\bo_{(k-1,N-k+1)})^{L}$ and $(\varphi'_{k})^{R} \circ ({\E}\bo_{(k,N-k)})^{L}({\spi}^{-})[k-2]$.

We calculate each term of $(\varphi'_{k-1})^{R} \circ ({\F}_{N-k}\bo_{(k-1,N-k+1)})^{L}$, which is the following
\begin{equation} \label{eq 33}
	({\F}_{\lambda} {\bo}_{(0,N)})^{R} ({\spi^{-}})^{-N+k-1} ({\spi^{+}})^{-k+1} \circ ({\F}_{N-k}\bo_{(k-1,N-k+1)})^{L} [-(k-1)(N-k+1)+N].
\end{equation} where $\lambda=(\lambda_{1},..,\lambda_{k-1}) \in P(N-k+1,k-1)$,

Using the definition of categorical action, we have 
\begin{align*}
	& ({\spi^{-}})^{-N+k-1} ({\spi^{+}})^{-k+1} \circ ({\F}_{N-k}\bo_{(k-1,N-k+1)})^{L} [-(k-1)(N-k+1)+N]  \\
	& \cong ({\spi^{-}})^{-N+k-1} ({\spi^{+}})^{-k+1}  {\E} ({\spi}^{+})^{-1} [1-(k-1)(N-k+1)+N] \bo_{(k,N-k)}\\
	& \cong ({\spi^{-}})^{-N+k-1}  {\E}_{-k+1} ({\spi}^{+})^{-k} [k-(k-1)(N-k+1)+N] \bo_{(k,N-k)}\\
	& \cong ({\spi^{-}})^{-N+k+1}  {\E}_{-k-1} ({\spi^{-}})^{-2} ({\spi}^{+})^{-k} [-2+k-(k-1)(N-k+1)+N] \bo_{(k,N-k)}\\
	& \cong  ({\F}_{N-k}{\bo}_{(k-1,N-k+1)})^{R} ({\spi^{-}})^{-N+k} ({\spi}^{+})^{-k} [(-N+k+1)-2+k-(k-1)(N-k+1)+N] \bo_{(k,N-k)} \\
	& \cong  ({\F}_{N-k}{\bo}_{(k-1,N-k+1)})^{R} ({\spi^{-}})^{-N+k} ({\spi}^{+})^{-k} [-k(N-k)+N] \bo_{(k,N-k)}.
\end{align*}

This implies that 
\begin{align*}
&(\varphi'_{k-1})^{R}({\F}_{N-k}\bo_{(k-1,N-k+1)})^{L} \\
&\cong (\varphi_{k-1})^{R} ({\spi^{-}})^{-N+k-1} ({\spi^{+}})^{-k+1}  ({\F}_{N-k}\bo_{(k-1,N-k+1)})^{L} [-(k-1)(N-k+1)+N]  \\
&\cong (\varphi_{k-1})^{R} ({\F}_{N-k}{\bo}_{(k-1,N-k+1)})^{R} ({\spi^{-}})^{-N+k} ({\spi}^{+})^{-k} [-k(N-k)+N] \bo_{(k,N-k)}.
\end{align*}

Thus (\ref{eq 33}) becomes 
\begin{align*}
	&({\F}_{\lambda} {\bo}_{(0,N)})^{R}({\F}_{N-k}{\bo}_{(k-1,N-k+1)})^{R} ({\spi^{-}})^{-N+k} ({\spi}^{+})^{-k} [-k(N-k)+N] \bo_{(k,N-k)} \\
	& \cong ({\F}_{N-k}{\F}_{\lambda_{1}}...{\F}_{\lambda_{k-1}} {\bo}_{(0,N)})^{R} ({\spi^{-}})^{-N+k} ({\spi}^{+})^{-k} [-k(N-k)+N] \bo_{(k,N-k)} .
\end{align*}

So we have $(\varphi'_{k-1})^{R} \circ ({\F}_{N-k}\bo_{(k-1,N-k+1)})^{L}$ can be written as a direct sum whose each summand is a direct summand of  $(\varphi'_{k})^{R}$. By the same argument as in the proof of Theorem \ref{Theorem 7}, we conclude that $({\F}_{N-k}\bo_{(k-1,N-k+1)})^{L}$ is t-exact.

Next, we check the t-exactness for $({\E}\bo_{(k,N-k)})^{L}({\spi}^{-})[k-2]$. Again, we calculate each term of $(\varphi'_{k})^{R} \circ ({\E}\bo_{(k,N-k)})^{L}({\spi}^{-})[k-2]$, which is the following
\begin{equation} \label{eq 34}
	({\F}_{\lambda} {\bo}_{(0,N)})^{R} ({\spi^{-}})^{-N+k} ({\spi^{+}})^{-k} \circ ({\E}\bo_{(k,N-k)})^{L}({\spi}^{-})[k-2][-k(N-k)+N]
\end{equation} where $\lambda=(\lambda_{1},...,\lambda_{k}) \in P(N-k,k)$.

Using the definition of categorical action, we have 
\begin{align*}
	&({\spi^{-}})^{-N+k} ({\spi^{+}})^{-k} \circ ({\E}\bo_{(k,N-k)})^{L} [-k(N-k)+N] \\
	& \cong ({\spi^{-}})^{-N+k} ({\spi^{+}})^{-k} ({\spi}^{-})^{k-1} {\F}  ({\spi}^{-})^{-k}{\bo}_{(k-1,N-k+1)} [k-k(N-k)+N] \\
	&\cong  ({\spi^{+}})^{-k}  {\F}_{N-2k+1}  ({\spi}^{-})^{-N+k-1} {\bo}_{(k-1,N-k+1)} [k-k(N-k)+N+N-2k+1] \\
	& \cong  ({\spi^{+}})  {\F}_{N-k+2}  ({\spi}^{-})^{-N+k-1} ({\spi^{+}})^{-k-1} {\bo}_{(k-1,N-k+1)} [-k(N-k)+N+N-2k] \\
	& \cong  ({\E}{\bo_{(k,N-k)}})^{R}  ({\spi}^{-})^{-N+k-1} ({\spi^{+}})^{-k+1} {\bo}_{(k-1,N-k+1)} [-k(N-k)+N+N-2k+1] \\
	& \cong  ({\E}{\bo_{(k,N-k)}})^{R}  ({\spi}^{-})^{-N+k-1} ({\spi^{+}})^{-k+1} {\bo}_{(k-1,N-k+1)} [-(k-1)(N-k+1)+N] 
\end{align*}

This implies that 
\begin{align*}
	&(\varphi'_{k})^{R} \circ ({\E}\bo_{(k,N-k)})^{L}({\spi}^{-})[k-2]\\
	&\cong (\varphi_{k})^{R} ({\spi^{-}})^{-N+k} ({\spi^{+}})^{-k}  ({\E}\bo_{(k,N-k)})^{L}({\spi}^{-})[k-2][-k(N-k)+N]  \\
	&\cong (\varphi_{k})^{R} ({\E}{\bo_{(k,N-k)}})^{R}({\spi}^{-})  ({\spi}^{-})^{-N+k-1} ({\spi^{+}})^{-k+1} {\bo}_{(k-1,N-k+1)} [k-2][-(k-1)(N-k+1)+N] 
\end{align*}

Thus (\ref{eq 34}) becomes 
\begin{align} \label{eq 35}
	\begin{split}
		&({\F}_{\lambda} {\bo}_{(0,N)})^{R} ({\E}{\bo_{(k,N-k)}})^{R}  ({\spi}^{-})^{-N+k-1} ({\spi^{+}})^{-k+1}({\spi}^{-})[k-2] [-(k-1)(N-k+1)+N] {\bo}_{(k-1,N-k+1)} \\
		& \cong ({\E}{\F}_{\lambda_{1}}...{\F}_{\lambda_{k}} {\bo}_{(0,N)})^{R}  ({\spi}^{-})^{-N+k-1} ({\spi^{+}})^{-k+1}({\spi}^{-})[k-2] [-(k-1)(N-k+1)+N] {\bo}_{(k-1,N-k+1)}
	\end{split} 
\end{align}

By the proof of Theorem \ref{Theorem 7}, in the case where ${\E}{\F}_{\lambda_{1}}...{\F}_{\lambda_{k}} {\bo}_{(0,N)} \ncong 0$, (\ref{eq 35}) becomes 
\begin{align*}
	&({\E}{\F}_{\lambda_{1}}...{\F}_{\lambda_{k}} {\bo}_{(0,N)})^{R}  ({\spi}^{-})^{-N+k-1} ({\spi^{+}})^{-k+1}({\spi}^{-})[k-2] [-(k-1)(N-k+1)+N] {\bo}_{(k-1,N-k+1)} \\ 
	&\cong (({\F}_{\lambda_{1}+1}...{\F}_{\lambda_{k-1}+1}{\bo}_{(0,N)})^{R}({\spi}^{-})^{-N+k-1} ({\spi^{+}})^{-k+1} [-(k-1)(N-k+1)+N] {\bo}_{(k-1,N-k+1)}
\end{align*} and $ 1 \leq \lambda_{k-1}+1 \leq ... \leq \lambda_{1}+1 \leq N-k+1$, which means that it is a direct summand of $(\varphi'_{k-1})^{R}$. Hence, we show that $({\E}\bo_{(k,N-k)})^{L}({\spi}^{-})[k-2]$ is t-exact.
\end{proof}
\end{corollary}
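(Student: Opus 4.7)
The plan is to reduce this corollary directly to Theorem \ref{Theorem 7}, exploiting the fact that $\varphi'_{k}$ differs from $\varphi_{k}$ only by invertible autoequivalences (powers of $\spi^{\pm}$) together with an overall homological shift. First I would verify that the image of $\varphi'_{k}$ still weakly generates $\Kk(k,N-k)$; this is immediate because $\spi^{\pm}$ and homological shifts are invertible functors. Next I would compute
\begin{equation*}
(\varphi'_{k})^{R}\circ \varphi'_{k} \;\cong\; (\varphi_{k})^{R}\circ (\spi^{+})^{-k}(\spi^{-})^{-(N-k)}[-(k(N-k)-N)]\circ (\spi^{-})^{N-k}(\spi^{+})^{k}\varphi_{k}[k(N-k)-N],
\end{equation*}
where all invertible twists cancel, yielding $(\varphi'_{k})^{R}\varphi'_{k}\cong (\varphi_{k})^{R}\varphi_{k}$, which is t-exact by the assumption inherited from Theorem \ref{Theorem 7}. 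Theorem \ref{Theorem 8} combined with Lemma \ref{lemma 8} then produces the unique t-structure on $\Kk(k,N-k)$ characterized by (\ref{eq 43}).

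The bulk of the work is checking the t-exactness of $({\F}_{N-k-1}\bo_{(k,N-k)})^{L}$ and $({\E}\bo_{(k,N-k)})^{L}({\spi}^{-})[k-2]$ on this new t-structure. The strategy mirrors the proof of Theorem \ref{Theorem 7}: given $X$ in the appropriate weight category, we test membership of its image in $\Kk(\cdot,\cdot)^{'\leq 0}$ by post-composing with $(\varphi'_{*})^{R}$. The key step is to use the adjunction formulas from Definition \ref{definition 2}(3), which express $(\F_{s})^{L}$ and $(\E_{r})^{L}$ as $\F$ and $\E$ conjugated by specific powers of $\spi^{+}$ (rather than $\spi^{-}$, as for the right adjoint) with a prescribed shift. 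After expanding, I would slide the resulting $\spi^{+}$ factors past $\F$ and $\E$ using relations (U08)--(U10) of Definition \ref{definition 2} to absorb them into the powers of $\spi^{\pm}$ that already appear in $(\varphi'_{*})^{R}$.

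Once the bookkeeping is done, each summand of $(\varphi'_{k-1})^{R}\circ ({\F}_{N-k}\bo_{(k-1,N-k+1)})^{L}$ (respectively $(\varphi'_{k})^{R}\circ ({\E}\bo_{(k,N-k)})^{L}({\spi}^{-})[k-2]$) should take the form $(\varphi_{k-1})^{R}\circ ({\F}_{N-k}\bo_{(k-1,N-k+1)})^{R}\circ(\text{invertible twist})$, i.e.\ a direct summand of $(\varphi'_{k})^{R}$ (respectively $(\varphi'_{k-1})^{R}$). Then by Lemma \ref{lemma 9}, the summand lies in $\Kk(0,N)^{\leq 0}$ whenever $(\varphi'_{*})^{R}(X)$ does, which is exactly what (\ref{eq 43}) requires. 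The dual statement for $\geq 0$ follows identically. In effect, the whole verification is the same calculation performed in the proof of Theorem \ref{Theorem 7}, but interpreted through the twisted functors $\varphi'_{k}$.

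The main obstacle I anticipate is the bookkeeping of $\spi^{+}$, $\spi^{-}$ exponents and the integer shift $[k(N-k)-N]$: one needs to track how each application of the adjunction formula (which introduces a $\mp 1$ shift and flips $\spi^{+}\leftrightarrow \spi^{-}$) combines with the commutation relations (U08)--(U10) (each of which shifts indices of $\E_{r}$, $\F_{r}$ by $\pm 1$ and introduces additional $[\pm 1]$ shifts when passed through $\spi^{\pm}$). Getting the exponents to telescope exactly to a summand of $(\varphi'_{*})^{R}$ — with no residual non-invertible twist left over — is the delicate part, but it is forced by dimension counting: the net shift $[k(N-k)-N]$ is precisely half the difference of dimensions between the two weight categories, which should make all arithmetic balance.
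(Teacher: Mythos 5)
Your proposal follows essentially the same route as the paper's proof: twist $\varphi_{k}$ by the invertible functors $({\spi}^{\pm})^{\bullet}$ and a shift, observe $(\varphi'_{k})^{R}\circ\varphi'_{k}\cong(\varphi_{k})^{R}\circ\varphi_{k}$ so Theorem \ref{Theorem 8} and Lemma \ref{lemma 8} give the t-structure, and then verify t-exactness of the two left adjoints by rewriting them via the adjunction formulas of Definition \ref{definition 2}(3) and the ${\spi}^{\pm}$-commutation relations until every term is a direct summand of $(\varphi'_{*})^{R}$, concluding with Lemma \ref{lemma 9}. The only caveats are cosmetic: the relevant commutation relations are items (9)--(10) of Definition \ref{definition 2} rather than (U08)--(U10), and it is $({\F}_{s})^{L}$ that is ${\E}$ conjugated by powers of ${\spi}^{+}$ while $({\E}_{r})^{L}$ is ${\F}$ conjugated by powers of ${\spi}^{-}$; with those corrections your bookkeeping is exactly the computation carried out in the paper.
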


\subsection{Application  to $\Dd^b(\GG(k,N))$}

From Theorem \ref{Theorem 3}, we have a categorical action of $\dot{\Uu}_{0,N}(L\SL_{2})$ on $\bigoplus_{k} \Dd^b(\GG(k,N))$.  We would like to apply Corollary \ref{Corollary 8} to get t-structures on $\Dd^b(\GG(k,N))$.

For the highest weight $(0,N)$, we have $\GG(0,N)=pt$. So the highest weight category is $\Dd^b(\GG(0,N)) = \Dd^b(\text{Vect})$, which is the category of finite dimensional graded vector spaces. We equip  $\Dd^b(\GG(0,N))$ with the standard t-structure $(\Dd^{\leq 0}(0,N), \Dd^{\geq 0}(0,N))$; i.e., 
\begin{align*}
& \Dd^{\leq 0}(0,N)=\langle \{ \CC[d] \ | \ d \geq 0 \} \rangle \\
& \Dd^{\geq 0}(0,N)=\langle \{ \CC[d] \ | \ d \leq 0 \} \rangle.
\end{align*}

We would like to apply Corollary \ref{Corollary 8} to obtain t-structures $(\Dd_{act}^{\leq 0}(k,N-k), \Dd_{act}^{\geq 0}(k,N-k))$ on each weight category $\Dd^b(\GG(k,N))$. So we need to know what are the functors $\varphi'_{k}$ in Corollary \ref{Corollary 8} when the categories are $\Dd^b(\GG(k,N))$.

Since we are in the geometric setting, all the functors are given by FM kernels and composition of functors are given by convolution of kernels. Recall that the definition of $\varphi'_{k}$.
\begin{equation*}
\varphi'_{k}:= ({\spi^{-}})^{N-k} ({\spi^{+}})^{k} \bigoplus_{\lambda \in P(N-k,k)} {\F}_{\lambda} {\bo}_{(0,N)} [k(N-k)-N]:\Dd^b(\GG(0,N)) \rightarrow \Dd^b(\GG(k,N))
\end{equation*}

By Definition \ref{definition 3}, we know what are the kernels for ${\spi^{\pm}}$ and ${\F}_{\lambda_{i}}$. An easy calculation tells us that the kernel for $({\spi^{-}})^{N-k} ({\spi^{+}})^{k}$ is given by 
\begin{equation*}
({\Psi^{-}})^{N-k} \ast  ({\Psi^{+}})^{k} \cong \Delta(k)_{*}\tdet (\V)^{-N+k} \otimes \tdet (\CC^N/\V)^{k} [N-2k(N-k)] \in \Dd^b(\GG(k,N) \times \GG(k,N))
\end{equation*}

Note that $\tdet (\V)^{-N+k} \otimes \tdet (\CC^N/\V)^{k}$ is the anti-canonical bundle, denoted by $\omega^{-1}_{\GG(k,N)}$ on $\GG(k,N)$. On the other hand, by Corollary \ref{Corollary 6}, we know that the kernel for ${\F}_{\lambda} {\bo}_{(0,N)}={\F}_{\lambda_{1}}... {\F}_{\lambda_{k}}{\bo}_{(0,N)}$ is given by 
\begin{equation*}
\Ff_{\lambda_{1}} \ast ... \ast \Ff_{\lambda_{k}} \bo_{(0,N)} \cong \s_{\lambda}\V  \in \Dd^b(\GG(0,N) \times \GG(k,N)).
\end{equation*}

Thus the kernel for $\varphi_{k}$ is given by 
\begin{equation} \label{eq 36}
\omega^{-1}_{\GG(k,N)} \otimes \bigoplus_{\lambda \in P(N-k,k)} \s_{\lambda}\V [-k(N-k)]  \in \Dd^b(\GG(0,N) \times \GG(k,N)).
\end{equation}

To apply Corollary \ref{Corollary 8}, first, we have to check that the image of $\varphi'_{k}$ weakly generate $\Dd^b(\GG(k,N))$. This is easy to see since that $\bigoplus_{\lambda \in P(N-k,k)} \s_{\lambda}\V$ is a tilting bundle on $\Dd^b(\GG(k,N))$, and tensoring by an line bundle or homological shifts do not change the tilting property. We have $\GG(0,N)=pt$ and $\Dd^b(\GG(0,N))$ is generated by the one-dimensional vector space $\CC$. So the image of $\varphi'_{k}$ is a tilting bundle, which weakly generate $\Dd^b(\GG(k,N))$.

Next, we have to check $(\varphi'_{k})^{R} \varphi'_{k}$ is t-exact on the standard t-structure  $(\Dd^{\leq 0}(0,N), \Dd^{\geq 0}(0,N))$. We already know the kernel for $\varphi'_{k}$ is (\ref{eq 36}), by Proposition \ref{Proposition 1}, we have the kernel for right adjoint functor $(\varphi'_{k})^{R}$ is given by 
\begin{equation} \label{eq 38}
	\omega_{\GG(k,N)} \otimes \bigoplus_{\lambda \in P(N-k,k)} (\s_{\lambda}\V)^{\vee} [k(N-k)]  \in \Dd^b(\GG(k,N) \times \GG(0,N)).
\end{equation}

Thus $(\varphi'_{k})^{R} \varphi'_{k}$ is given by convolution of kernels 
\begin{align} \label{eq 37}
	\begin{split}
		& \{\omega_{\GG(k,N)} \otimes \bigoplus_{\lambda \in P(N-k,k)} (\s_{\lambda}\V)^{\vee} [k(N-k)]\} \ast \{\omega^{-1}_{\GG(k,N)} \otimes \bigoplus_{\lambda \in P(N-k,k)} \s_{\lambda}\V [-k(N-k)] \} \\
		& \cong \bigoplus_{\lambda, \lambda' \in P(N-k,k)} \pi_{13*}((\s_{\lambda}\V)^{\vee} \otimes \s_{\lambda'}\V)
	\end{split}
\end{align} where $\pi_{13}:\GG(0,N) \times \GG(k,N) \times \GG(0,N) \rightarrow \GG(0,N) \times \GG(0,N)$ is the natural projection.

Since $\pi_{13}$ is just the projection from $\GG(k,N)$ to a point, $\pi_{13*}((\s_{\lambda}\V)^{\vee} \otimes \s_{\lambda'}\V)$ is precisely the sheaf cohomology of $(\s_{\lambda}\V)^{\vee} \otimes \s_{\lambda'}\V$; i.e.,  $\pi_{13*}((\s_{\lambda}\V)^{\vee} \otimes \s_{\lambda'}\V) = H^{*}(\GG(k,N),(\s_{\lambda}\V)^{\vee} \otimes \s_{\lambda'}\V)$.

We know that 
\begin{equation*}
H^{*}(\GG(k,N),(\s_{\lambda}\V)^{\vee} \otimes \s_{\lambda'}\V) \cong R\Hom(\Oo_{\GG(k,N)}, (\s_{\lambda}\V)^{\vee} \otimes \s_{\lambda'}\V) \cong \Ext^{*}(\s_{\lambda}\V, \s_{\lambda'}\V)
\end{equation*} and from Corollary \ref{Corollary 2}, $\R_{(k,N-k)}$ is a strong full exceptional collection. This implies that $\Ext^{*}(\s_{\lambda}\V, \s_{\lambda'}\V)$ only have degree 0 part; i.e., $\Hom(\s_{\lambda}\V, \s_{\lambda'}\V)$. 

So (\ref{eq 37}) becomes $\bigoplus_{\lambda, \lambda' \in P(N-k,k)}\Hom(\s_{\lambda}\V, \s_{\lambda'}\V)$, which is a vector space concentrate in homological degree 0. Thus $(\varphi'_{k})^{R} \varphi'_{k}$ is t-exact on the standard t-structure.

Now we can apply Corollary \ref{Corollary 8} to get the induced t-structure, denoted by $(\Dd_{act}^{\leq 0}(k,N-k), \Dd_{act}^{\geq 0}(k,N-k))$ on $\Dd^b(\GG(k,N))$. It has the following description
\begin{align*}
	\Dd_{act}^{\geq 0}(k,N-k) & := \{X \in \Dd^b(\GG(k,N)) \ | \ (\varphi'_{k})^{R}(X) \in \Dd^{\geq 0}(0,N) \}, \\
	\Dd_{act}^{\leq 0}(k,N-k)  & := \{X \in \Dd^b(\GG(k,N)) \ | \ (\varphi'_{k})^{R}(X) \in \Dd^{\leq 0}(0,N) \}. 
\end{align*}

The main result of this section is the following.

\begin{theorem} \label{Theorem 9}
The induced t-structure on $\Dd^b(\GG(k,N))$ from the standard t-structure on $\Dd^b(\GG(0,N))$ under the categorical $\dot{\Uu}_{0,N}(L\SL_{2})$ action is precisely the t-structure from Corollary \ref{Corollary 4}. More precisely, we have
\begin{align*}
&\Dd_{act}^{\geq 0}(k,N-k) := \{X \in \Dd^b(\GG(k,N)) \ | \ (\varphi'_{k})^{R}(X) \in \Dd^{\geq 0}(0,N) \} \\
& = \Dd_{ex}^{\geq 0}(k,N-k)=\langle \{\s_{\lambda}\V[d] \ | \ \lambda \in P(N-k,k), \ d \leq 0 \} \rangle, \\
&\Dd_{act}^{\leq 0}(k,N-k)  := \{X \in \Dd^b(\GG(k,N)) \ | \ (\varphi'_{k})^{R}(X) \in \Dd^{\leq 0}(0,N) \} \\
&= \Dd_{ex}^{\leq 0}(k,N-k)=\langle \{\s_{\mu}(\CC^N/\V)[-|\mu|+d]  \ | \ \mu \in P(k,N-k), \ d \geq 0 \} \rangle.
\end{align*}
\end{theorem}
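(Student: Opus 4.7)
My plan is to invoke Lemma \ref{lemma 10}, which reduces the theorem to establishing the two inclusions $\Dd_{ex}^{\geq 0}(k,N-k) \subset \Dd_{act}^{\geq 0}(k,N-k)$ and $\Dd_{ex}^{\leq 0}(k,N-k) \subset \Dd_{act}^{\leq 0}(k,N-k)$. The first step is to unwind the definition of $\Dd_{act}^{\bullet}$ into concrete Ext-vanishing conditions. Using the kernel (\ref{eq 38}) for $(\varphi'_k)^{R}$, the identification $\GG(0,N)=pt$, and Serre duality on $\GG(k,N)$ (which has dimension $k(N-k)$), the functor $(\varphi'_k)^{R}$ reduces to
\[
(\varphi'_k)^{R}(X) \;\cong\; \bigoplus_{\lambda \in P(N-k,k)} R\Hom(X, \s_\lambda \V)^{\vee}.
\]
Since the standard t-structure on $\Dd^b(\GG(0,N))$ selects complexes concentrated in non-negative (resp.\ non-positive) degrees, this yields
\[
\Dd_{act}^{\geq 0}(k,N-k) = \{X : \Ext^{j}(X, \s_\lambda \V)=0 \ \forall j>0,\ \lambda \in P(N-k,k)\},
\]
and analogously with $j<0$ for $\Dd_{act}^{\leq 0}(k,N-k)$.

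With these descriptions, the two inclusions reduce to checking the generators of $\Dd_{ex}^{\geq 0}$ and $\Dd_{ex}^{\leq 0}$ listed in Corollary \ref{Corollary 4}. For $\Dd_{ex}^{\geq 0} \subset \Dd_{act}^{\geq 0}$, I need $\Ext^{j}(\s_\lambda \V, \s_{\lambda'}\V)=0$ for $j>0$ and all $\lambda, \lambda' \in P(N-k,k)$, which is precisely the strongness of the Kapranov exceptional collection (Corollary \ref{Corollary 2}). For $\Dd_{ex}^{\leq 0} \subset \Dd_{act}^{\leq 0}$, after absorbing the shift by $-|\mu|$, I need $\Ext^{i}(\s_\mu(\CC^N/\V), \s_\lambda \V)=0$ for all $i<|\mu|$; this follows directly from Lemma \ref{lemma 3}, because that Ext group is either identically zero (when $\lambda \neq \mu^{*}$) or one-dimensional concentrated in degree $|\mu|=|\lambda|$ (when $\lambda=\mu^{*}$), in either case vanishing below degree $|\mu|$.

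Finally, because $\Dd_{act}^{\geq 0}$ is closed under extensions and non-positive homological shifts, while $\Dd_{act}^{\leq 0}$ is closed under extensions and non-negative shifts, the two generator checks propagate to the full subcategories $\langle \s_\lambda\V[d] : d \leq 0\rangle$ and $\langle \s_\mu(\CC^N/\V)[-|\mu|+d] : d \geq 0\rangle$, yielding the desired inclusions. An application of Lemma \ref{lemma 10} then forces equality of the two t-structures. The only delicate step I anticipate is the Serre-duality simplification of $(\varphi'_k)^{R}$ in the first paragraph: one must correctly track the homological shift $[k(N-k)]$ built into the kernel against the $[\dim \GG(k,N)]$ shift from Serre duality, so that these precisely cancel to leave $R\Hom(X, \s_\lambda\V)^{\vee}$; once that identity is in place, the remainder of the argument is an immediate consequence of Corollary \ref{Corollary 2} and Lemma \ref{lemma 3}.
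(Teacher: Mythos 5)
Your proposal is correct, and its skeleton is the same as the paper's: reduce via Lemma \ref{lemma 10} to the two inclusions $\Dd_{ex}^{\geq 0}\subset\Dd_{act}^{\geq 0}$, $\Dd_{ex}^{\leq 0}\subset\Dd_{act}^{\leq 0}$, and verify these on the generators of Corollary \ref{Corollary 4}. Where you diverge is in how the generator check is executed. The paper works directly with the kernel (\ref{eq 38}): it computes $(\varphi'_k)^R$ on each generator as sheaf cohomology on $\GG(k,N)$, rewriting the $\omega_{\GG(k,N)}$-twist by hand, and then invokes Lemma \ref{lemma 3} for the $\s_{\mu}(\CC^N/\V)[-|\mu|+d']$ generators and the Littlewood--Richardson rule plus Lemma \ref{lemma 3} again for the $\s_{\lambda}\V[d]$ generators. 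You instead first perform the Serre-duality simplification $(\varphi'_k)^{R}(X)\cong\bigoplus_{\lambda}R\Hom(X,\s_{\lambda}\V)^{\vee}$ (the $[k(N-k)]$ in the kernel indeed cancels the $[\dim\GG(k,N)]$ from Serre duality, and your Ext-vanishing descriptions of $\Dd_{act}^{\gtrless 0}$ and the subsequent degree bookkeeping on the shifted generators are correct). This reduces the $\Dd_{ex}^{\geq 0}$ check to the strongness statement of Corollary \ref{Corollary 2} and the $\Dd_{ex}^{\leq 0}$ check to Lemma \ref{lemma 3} verbatim, matching the paper's explicit outputs (e.g.\ $\bigoplus_{\lambda}\CC[d']$). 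A small bonus of your route: the paper's treatment of the $\s_{\lambda}\V[d]$ generators applies Lemma \ref{lemma 3} to partitions $\nu$ with $\nu_1$ possibly as large as $2(N-k)$, i.e.\ outside the lemma's stated range (the conclusion is still true, but needs strongness or Borel--Weil--Bott to justify), whereas your argument uses only the strongness already recorded in Corollary \ref{Corollary 2}, so nothing is used beyond its stated hypotheses.
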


\begin{proof}
To prove this theorem, it suffices to prove $\Dd_{ex}^{\leq 0}(k,N-k) \subset \Dd_{act}^{\leq 0}(k,N-k)$ and  $\Dd_{ex}^{\geq 0}(k,N-k) \subset \Dd_{act}^{\geq 0}(k,N-k)$. Then by Lemma \ref{lemma 10} to get the result. 

From Corollary \ref{Corollary 4}, we know that the t-structure $(\Dd_{ex}^{\leq 0}(k,N-k), \Dd_{ex}^{\geq 0}(k,N-k))$ can be described by exceptional collections. So to prove $\Dd_{ex}^{\leq 0}(k,N-k) \subset \Dd_{act}^{\leq 0}(k,N-k)$ and  $\Dd_{ex}^{\geq 0}(k,N-k) \subset \Dd_{act}^{\geq 0}(k,N-k)$, it suffice to check on each generators; i.e., $\s_{\lambda}\V[d] \in \Dd_{act}^{\geq 0}(k,N-k)$ for all $d \leq 0$ and $\s_{\mu}(\CC^N/\V)[-|\mu|+d'] \in  \Dd_{act}^{\leq 0}(k,N-k)$ for all $d' \geq 0$.

First, we show that  $\s_{\mu}(\CC^N/\V)[-|\mu|+d'] \in  \Dd_{act}^{\leq 0}(k,N-k)$ for all $d' \geq 0$. To show this, it suffices to show that $(\varphi'_{k})^{R}(\s_{\mu}(\CC^N/\V)[-|\mu|+d']) \in  \Dd^{\leq 0}(0,N)$. Since $(\varphi'_{k})^{R}$ is a FM transform with FM kernel given by (\ref{eq 38}), by definition we have to calculate the following 
\begin{align} \label{eq 39}
	\begin{split}
	&(\varphi'_{k})^{R}(\s_{\mu}(\CC^N/\V)[-|\mu|+d'])=\pi_{2*}(\pi^{*}_{1}(\s_{\mu}(\CC^N/\V)[-|\mu|+d']) \otimes \omega_{\GG(k,N)} \otimes \bigoplus_{\lambda \in P(N-k,k)} (\s_{\lambda}\V)^{\vee} [k(N-k)]  ) \\
	& \cong \bigoplus_{\lambda \in P(N-k,k)}  \pi_{2*}(\s_{\mu}(\CC^N/\V)\otimes \omega_{\GG(k,N)} \otimes (\s_{\lambda}\V)^{\vee} [k(N-k)-|\mu|+d'] ).
	\end{split}
\end{align} where $\pi_{1}, \pi_{2}$ are the natural projection from $\GG(k,N) \times \GG(0,N)$ to $\GG(k,N)$, $\GG(0,N)$ respectively.

Let $\mu=(\mu_{1},...,\mu_{N-k})$ and $\lambda=(\lambda_{1},..,\lambda_{k})$ with $0 \leq \mu_{N-k} \leq ... \leq \mu_{1} \leq k$ and $ 0 \leq \lambda_{k} \leq ... \leq \lambda_{1} \leq N-k$. Since $\omega_{\GG(k,N)}=\tdet (\V)^{-N+k} \otimes \tdet (\CC^N/\V)^{k}$, we have 
\begin{align*}
&\s_{\mu}(\CC^N/\V) \otimes \omega_{\GG(k,N)} \otimes (\s_{\lambda}\V)^{\vee} 
\cong \s_{\mu}(\CC^N/\V) \otimes \tdet (\V)^{N-k} \otimes \tdet (\CC^N/\V)^{-k}  \otimes (\s_{\lambda}\V)^{\vee} \\
& \cong \s_{(\mu_{1},...,\mu_{N-k})}(\CC^N/\V) \otimes \tdet (\V)^{N-k} \otimes \tdet (\CC^N/\V)^{-k}  \otimes \s_{(-\lambda_{k},..,-\lambda_{1})}\V \\
& \cong \s_{(\mu_{1}-k,...,\mu_{N-k}-k)}(\CC^N/\V) \otimes   \s_{(-\lambda_{k}+N-k,..,-\lambda_{1}+N-k)}\V \\
& \cong \s_{(-\mu_{N-k}+k,...,-\mu_{1}+k)}(\CC^N/\V)^{\vee} \otimes   \s_{(-\lambda_{k}+N-k,..,-\lambda_{1}+N-k)}\V.
\end{align*}

Since $\pi_{2}$ is projection from $\GG(k,N)$ to a point, $ \pi_{2*}(\s_{\mu}(\CC^N/\V) \otimes \omega_{\GG(k,N)} \otimes (\s_{\lambda}\V)^{\vee} [k(N-k)-|\mu|+d'] )$ is calculating the sheaf cohomology. More precisely, 
\begin{align*}
&\pi_{2*}(\s_{\mu}(\CC^N/\V) \otimes \omega_{\GG(k,N)} \otimes (\s_{\lambda}\V)^{\vee} [k(N-k)-|\mu|+d'] ) \\
& \cong H^{*}(\GG(k,N), \s_{\mu}(\CC^N/\V) \otimes \omega_{\GG(k,N)} \otimes (\s_{\lambda}\V)^{\vee} [k(N-k)-|\mu|+d'] ) \\
&\cong H^{*}(\GG(k,N), \s_{(-\mu_{N-k}+k,...,-\mu_{1}+k)}(\CC^N/\V)^{\vee} \otimes   \s_{(-\lambda_{k}+N-k,..,-\lambda_{1}+N-k)}\V [k(N-k)-|\mu|+d'] )
\end{align*}

Note that we have $0 \leq -\mu_{1}+k \leq ... \leq -\mu_{N-k}+k \leq k$ and $ 0 \leq -\lambda_{1}+N-k \leq ... \leq -\lambda_{k}+N-k \leq N-k$, so by Lemma \ref{lemma 3}, we conclude that 
\begin{align*}
&H^{i}(\GG(k,N), \s_{(-\mu_{N-k}+k,...,-\mu_{1}+k)}(\CC^N/\V)^{\vee} \otimes   \s_{(-\lambda_{k}+N-k,..,-\lambda_{1}+N-k)}\V [k(N-k)-|\mu|+d'] ) \\
&= \begin{cases}
	0 & \text{for all} \ i, \ \text{if} \ (-\lambda_{k}+N-k,..,-\lambda_{1}+N-k) \neq (-\mu_{N-k}+k,...,-\mu_{1}+k)^{*} \\
	0 &  \text{for all} \ i \neq -d',  \text{if} \ (-\lambda_{k}+N-k,..,-\lambda_{1}+N-k)= (-\mu_{N-k}+k,...,-\mu_{1}+k)^{*} \\
	\CC &  \text{for all} \ i=-d',  \text{if} \ (-\lambda_{k}+N-k,..,-\lambda_{1}+N-k)= (-\mu_{N-k}+k,...,-\mu_{1}+k)^{*}. \\
\end{cases}
\end{align*}

This implies that (\ref{eq 39}) is isomorphic to $\bigoplus_{\lambda} \CC[d'] \in \Dd^{\leq 0}(0,N)$, where $d' \geq 0$ and the sum is over all $\lambda=(\lambda_{1},..,\lambda_{k})$ such that $(-\lambda_{k}+N-k,..,-\lambda_{1}+N-k)= (-\mu_{N-k}+k,...,-\mu_{1}+k)^{*}$. Thus $(\varphi'_{k})^{R}(\s_{\mu}\CC^N/\V[-|\mu|+d']) \in  \Dd^{\geq 0}(0,N)$.

Next, we show that $\s_{\lambda}\V[d] \in \Dd_{act}^{\geq 0}(k,N-k)$ for all $d \leq 0$. Similarly, we have to show that $(\varphi'_{k})^{R}(\s_{\lambda}\V[d]) \in  \Dd^{\geq 0}(0,N)$. Again, by definition of the FM transform and using the same argument to simplify, we have 
\begin{align} \label{eq 40}
	\begin{split}
		&(\varphi'_{k})^{R}(\s_{\lambda}\V[d])=\pi_{2*}(\pi^{*}_{1}(\s_{\lambda}\V[d] \otimes \omega_{\GG(k,N)} \otimes \bigoplus_{\lambda' \in P(N-k,k)} (\s_{\lambda'}\V)^{\vee} [k(N-k)]  ) \\
		& \cong \bigoplus_{\lambda' \in P(N-k,k)}  \pi_{2*}(\s_{\lambda}\V \otimes \omega_{\GG(k,N)} \otimes (\s_{\lambda'}\V)^{\vee} [k(N-k)+d] ) \\
		& \cong \bigoplus_{\lambda' \in P(N-k,k)}  H^{*}(\GG(k,N),\s_{(\lambda_{1},...,\lambda_{k})}\V \otimes \s_{(N-k-\lambda'_{k},...,N-k-\lambda'_{1})}\V \otimes \s_{(k,...,k)}(\CC^N/\V)^{\vee} [k(N-k)+d] ).
	\end{split}
\end{align} where $0 \leq \lambda_{k} \leq ... \leq \lambda_{1} \leq N-k$ and $0 \leq \lambda'_{k} \leq ... \leq \lambda'_{1} \leq N-k$, thus $0 \leq N-k-\lambda'_{1} \leq ... \leq N-k-\lambda'_{k} \leq N-k$.

If we denote $(N-k-\lambda'_{k},...,N-k-\lambda'_{1})$ by $\lambda''$, then we have to calculate $\s_{\lambda}\V \otimes \s_{\lambda''}\V$, which will be given by the Littlewood-Richardson rule. More precisely, we can write 
\[
\s_{\lambda}\V \otimes \s_{\lambda''}\V \cong \bigoplus_{\nu} (\s_{\nu}\V)^{\oplus c^{\lambda,\lambda''}_{\nu}}
\] where $\nu$ are certain Young diagram that obtained by adding cells of $\lambda''$ to $\lambda$, and $c^{\lambda,\lambda''}_{\nu}$ is the multiplicity called the Littlewood-Richardson coefficients.

So we have to calculate $H^{*}(\GG(k,N),\s_{\nu}\V \otimes \s_{(k,...,k)}(\CC^N/\V)^{\vee} [k(N-k)+d] )$. Note that since $0 \leq \lambda_{k} \leq ... \leq \lambda_{1} \leq N-k$ and $0 \leq N-k-\lambda'_{1} \leq ... \leq N-k-\lambda'_{k} \leq N-k$,  the possible range for $\nu=(\nu_{1},...,\nu_{k})$ are 
$0 \leq \nu_{k} \leq .... \leq \nu_{1} \leq 2(N-k)$.
 
By Lemma \ref{lemma 3}, we conclude that $H^{i}(\GG(k,N),\s_{\nu}\V \otimes \s_{(k,...,k)}(\CC^N/\V)^{\vee} [k(N-k)+d] )$ only nonzero when $i=-d$. Thus (\ref{eq 40}) becomes $\bigoplus_{i \in I} \CC^{n_{i}}[d]$ for some finite index set $I$ and positive integers $n_{i}$, which is in $\Dd^{\geq 0}(0,N)$. Hence we show that $(\varphi'_{k})^{R}(\s_{\lambda}\V[d]) \in  \Dd^{\geq 0}(0,N)$.
\end{proof}

\section{Passage to the Grothendieck groups} \label{Section 6}

One of the property for full exceptional collections in a triangulated category $\Dd$ is that, when pass to the K-theory (Grothendieck group), by definition, they give semiorthogonal basis with respect to the bilinear Euler form $\sum_{i \in \ZZ} (-1)^i \dim_{\CC} \Hom_{\Dd} ( - , - [i] )$.

From Corollary \ref{Corollary 2}, we have a full exceptional collection $R_{(k,N-k)}$ for $\Dd^b(\GG(k,N)))$. So when pass to the K-theory, we obtain a basis for $K(\GG(k,N))$. On the other hand, from Theorem \ref{Theorem 3}, we also have a categorical action of $\dot{\Uu}_{0,N}(L\SL_{2})$ on $\bigoplus_{k} \Dd^b(\GG(k,N))$. Thus, it induces an action of $\dot{\Uu}_{0,N}(L\SL_{2})$ on $\bigoplus_{k} K(\GG(k,N))$.

In this section, as an application, we  calculate the matrix 
coefficients for the generators $e_{r}1_{(k,N-k)}$, $f_{s}1_{(k,N-k)}$ of $\dot{\Uu}_{0,N}(L\SL_{2})$. In fact, the result we obtain holds for all $r, s \in \ZZ$ since the generators $e_{r}1_{(k,N-k)}$, $f_{s}1_{(k,N-k)}$ can be defined for all  $r, s \in \ZZ$.

This would not be done by direct calculation using the definition of the generators $e_{r}1_{(k,N-k)}, \ f_{s}1_{(k,N-k)}$, i.e. K-theoretical Fourier-Mukai transformation. Note that $e_{r}1_{(k,N-k)}, \ f_{s}1_{(k,N-k)}$ can be defined as 
$e1_{(k,N-k)}$, $f1_{(k,N-k)}$ up to some conjugation of determinant line bundles (see \ref{subsection 6.2} for details). So we need to calculate $e1_{(k,N-k)}$, $f1_{(k,N-k)}$ first. This is done by calculated at the categorical level via using Theorem \ref{Theorem 5} and convolution of kernels, then pass to the K-theory to obtain the result.

\subsection{The case $r=s=0$} \label{subsection 6.1}



In this subsection, we calculate the action of $e1_{(k,N-k)}, \ f1_{(k,N-k)}$ on the basis elements from the exceptional collection $\R_{(k,N-k)}$.

For each $k$, recall that the objects in $\R_{(k,N-k)}$ is of the form $\s_{\lambda}\V$ with $\lambda=(\lambda_{1},...,\lambda_{k})$ and $0 \leq \lambda_{k} \leq ... \leq \lambda_{1} \leq N-k$. By Corollary \ref{Corollary 6}, we have $\s_{\lambda}\V = \Ff_{\lambda_{1}} \ast ... \ast \Ff_{\lambda_{k}} \bo_{(0,N)}$. We have to understand how the functors ${\E}\bo_{(k,N-k)}$, ${\F}\bo_{(k,N-k)}$  acting on the objects of $\R_{(k,N-k)}$ for all $k$.



From Theorem \ref{Theorem 3}, we know that the action of $\dot{\Uu}_{0,N}(L\SL_{2})$ on $\bigoplus_{k}\Dd^b(\GG(k,N))$ is by using FM transformations, i.e.  ${\E}\bo_{(k,N-k)}$, ${\F}\bo_{(k,N-k)}$ are FM transformations with FM kernels  ${\Ee}\bo_{(k,N-k)}$, ${\Ff}\bo_{(k,N-k)}$. Thus to study the action of ${\E}\bo_{(k,N-k)}$, ${\F}\bo_{(k,N-k)}$ on those objects is to study the convolution of kernels and use Definition \ref{definition 2} to deduce the results.

\begin{proposition} \label{Proposition 3} 
Let $\lambda=(\lambda_{1},...,\lambda_{k})$ with $0 \leq \lambda_{k} \leq ... \leq \lambda_{1} \leq N-k$. We have the following isomorphism.
\begin{align}
\begin{split}
&{\E}\bo_{(k,N-k)}(\s_{\lambda}\V) \cong \Ee \ast  \Ff_{\lambda_{1}} \ast ... \ast \Ff_{\lambda_{k}} \bo_{(0,N)}  \\
& \cong \begin{cases}
0 & \ \text{if} \  1 \leq \lambda_{k} \leq N-k  \\ 
\Ff_{\lambda_{1}} \ast ... \ast \Ff_{\lambda_{k-1}} \bo_{(0,N)}  & \ \text{if} \   \lambda_{k}=0 
\end{cases}  \label{eq 18}  
\end{split}
\\
\begin{split}
&{\F}\bo_{(k,N-k)}(\s_{\lambda}\V) \cong \Ff \ast  \Ff_{\lambda_{1}} \ast ... \ast \Ff_{\lambda_{k}} \bo_{(0,N)} \\
& \cong \begin{cases}
	0 & \ \text{if} \  \lambda_{i}=i \  \text{for some} \ i  \\
	\Ff_{\lambda_{1}-1} \ast ... \ast  \Ff_{\lambda_{t}-1} \ast \Ff_{t} \ast \Ff_{\lambda_{t+1}}  ... \ast \Ff_{\lambda_{k}}[-t] \bo_{(0,N)}  & \ \parbox[t]{.3\textwidth}{$\exists$ ! $0 \leq t \leq k$ s.t.
		$\lambda_{i} \geq i+1$ for $1 \leq i \leq t$ and $\lambda_{t+1} \leq t$}
\end{cases}     \label{eq 19}
\end{split}
\end{align}
\end{proposition}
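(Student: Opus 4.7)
The first isomorphism in each of \eqref{eq 18} and \eqref{eq 19} is immediate from Corollary \ref{Corollary 6} together with Proposition \ref{Proposition 2}, so the content lies in the explicit evaluations on the right hand sides.

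For \eqref{eq 18}, the plan is to push $\Ee$ to the right of the string $\Ff_{\lambda_1} \ast \cdots \ast \Ff_{\lambda_k}$ by iteratively invoking relation (13) of Definition \ref{definition 2}. At the $j$-th such step one commutes $\Ee \ast \Ff_{\lambda_j}$ based at weight $(k-j,N-k+j)$, so in the notation of (13) we have $k \mapsto k-j$, $r=0$, $s=\lambda_j$. Comparing $\lambda_j$ against the five thresholds and using $0 \le \lambda_j \le N-k$, one checks that case (e) applies (giving a free commutation) for every $j=1,\dots,k-1$, and also for $j=k$ provided $\lambda_k \ge 1$. In the all-case-(e) scenario $\Ee$ is pushed past everything and the residual factor $\Ee \bo_{(0,N)}$ is zero, yielding the first branch. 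In the scenario $\lambda_k=0$ the last commutation is governed instead by case (c): the exact triangle $\Ee \ast \Ff_0 \bo_{(0,N)} \to \Ff_0 \ast \Ee \bo_{(0,N)} \to \spi^{-}\bo_{(0,N)}$ together with $\Ee \bo_{(0,N)} = 0$ and the direct identification $\spi^{-}\bo_{(0,N)} \cong \bo_{(0,N)}[1]$ (from Definition \ref{definition 3}, since $\V=0$ at $k=0$) collapses to $\Ee \ast \Ff_0 \bo_{(0,N)} \cong \bo_{(0,N)}$, producing the second branch.

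For \eqref{eq 19} the plan is parallel but uses relation (8) of Definition \ref{definition 2} to commute $\Ff_0$ rightward past $\Ff_{\lambda_1}, \Ff_{\lambda_2}, \dots$ one at a time. I will show by induction on $j$ that, as long as no $\lambda_i=i$ has been encountered at steps $\le j$ and no halt has occurred, the expression has been rewritten as $\Ff_{\lambda_1-1} \ast \cdots \ast \Ff_{\lambda_j-1} \ast \Ff_j \ast \Ff_{\lambda_{j+1}} \ast \cdots \ast \Ff_{\lambda_k}\bo_{(0,N)}[-j]$. The inductive step examines $\Ff_j \ast \Ff_{\lambda_{j+1}}$ via (8): the $r=s$ branch zeroes the kernel when $\lambda_{j+1}=j+1$; the $r-s\le -1$ branch applies when $\lambda_{j+1}\ge j+2$ and rewrites the pair as $\Ff_{\lambda_{j+1}-1} \ast \Ff_{j+1}[-1]$, advancing the induction; and $\lambda_{j+1}\le j$ halts the induction, setting $t:=j$ (with $t=k$ if the induction runs to completion). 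The resulting $t$ is unique by the trichotomy, and the halting condition $\lambda_{t+1}\le t$ is precisely what makes the final sequence $(\lambda_1-1,\dots,\lambda_t-1,t,\lambda_{t+1},\dots,\lambda_k)$ weakly decreasing.

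The main technical delicacy is the bookkeeping of intermediate weights and homological shifts across the commutations, and confirming that in each branch the produced expression lies in the range making it a bona fide Kapranov-type object (via Corollary \ref{Corollary 6}) in $\Dd^b(\GG(k\pm 1,N))$. The rest of the verification is purely arithmetic in the indices $\lambda_i$ versus $i$.
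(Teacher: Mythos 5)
Your proposal is correct and follows essentially the same route as the paper: write $\s_{\lambda}\V$ as $\Ff_{\lambda_1}\ast\cdots\ast\Ff_{\lambda_k}\bo_{(0,N)}$ via Corollary \ref{Corollary 6}, commute $\Ee$ rightward using the $\E_r\F_s$ relations of Definition \ref{definition 2} (free commutation in the intermediate range, the $\spi^-$-triangle plus $\Ee\bo_{(0,N)}=0$ and $\spi^-\bo_{(0,N)}\cong\bo_{(0,N)}[1]$ when $\lambda_k=0$), and handle $\Ff\ast\Ff_{\lambda_1}\ast\cdots$ by iterating the $\F_r\F_{s+1}$ relation with the same trichotomy (zero when $\lambda_i=i$, rewrite with a shift $[-1]$ when $\lambda_{j+1}\ge j+2$, halt when $\lambda_{t+1}\le t$). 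Your weight/index bookkeeping and case thresholds agree with the paper's proof (and your citations of relations (13) and (8) match the printed numbering, which the paper's own proof garbles slightly), so there is nothing substantive to add.
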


\begin{proof}
We use Definition \ref{definition 2} of the categorical action to help us prove this proposition. 

For (\ref{eq 18}), we have to calculate $ \Ee \ast  \Ff_{\lambda_{1}} \ast ... \ast \Ff_{\lambda_{k}} \bo_{(0,N)} $. Note that since $0 \leq \lambda_{k} \leq ... \leq \lambda_{1} \leq N-k$, we have  $\Ee \ast  \Ff_{\lambda_{1}} \bo_{(k-1, N-k+1)} \cong \Ff_{\lambda_{1}} \ast \Ee \bo_{(k-1, N-k+1)}$, which is by relation (16) (e) in Definition \ref{definition 2}. Similarly, we have $\Ee \ast  \Ff_{\lambda_{2}} \bo_{(k-2, N-k+2)} \cong \Ff_{\lambda_{1}} \ast \Ee \bo_{(k-1, N-k+2)}$ and so on. 

Eventually, we would end up with 
\begin{equation} \label{eq 20}
 \Ee \ast  \Ff_{\lambda_{1}} \ast ... \ast \Ff_{\lambda_{k}} \bo_{(0,N)}  \cong ... \cong   \Ff_{\lambda_{1}} \ast ... \ast \Ee \ast  \Ff_{\lambda_{k}} \bo_{(0,N)} .
\end{equation}

If $1 \leq \lambda_{k} \leq N-k$, then $\Ee \ast  \Ff_{\lambda_{k}} \bo_{(0,N)} \cong  \Ff_{\lambda_{k}} \ast \Ee \bo_{(0,N)}$ and so $(\ref{eq 20}) \cong 0$. Otherwise, we have $\lambda_{k}=0$ and so $\Ee \ast  \Ff_{\lambda_{k}} \bo_{(0,N)} \cong \Psi^{-}[-1]\bo_{(0,N)}$ from relation (16) (c) in Definition \ref{definition 2}. Since $\Psi^{-}$ is given by a FM kernel from Definition \ref{definition 3}, we have $\Psi^{-}[-1]\bo_{(0,N)} \cong \bo_{(0,N)}$ and thus  $\Ff_{\lambda_{1}} \ast ... \ast \Ee \ast  \Ff_{\lambda_{k}} \bo_{(0,N)}  \cong \Ff_{\lambda_{1}} \ast ... \ast \Ff_{\lambda_{k-1}} \bo_{(0,N)} $.

For (\ref{eq 19}), we use relation (10)(a) in Definition \ref{definition 2}.

If $\lambda_{i}=i$ for some $1 \leq i \leq k$, then we have $i=\lambda_{i} \leq \lambda_{i-1} \leq ... \leq \lambda_{1}$. This implies that 
\begin{align*}
\Ff \ast  \Ff_{\lambda_{1}} \ast ... \ast \Ff_{\lambda_{k}} \bo_{(0,N)} & \cong \Ff_{\lambda_{1}-1} \ast \Ff_{1} \ast ...  \ast \Ff_{\lambda_{k}} \bo_{(0,N)}[-1] \\
&\cong ...  \\
&\cong  \Ff_{\lambda_{1}-1} \ast ... \Ff_{\lambda_{i-1}-1}  \ast \Ff_{i-1} \ast \Ff_{\lambda_{i}} ...\ast \Ff_{\lambda_{k}} \bo_{(0,N)}[-i+1] \\
& \cong 0
\end{align*}

Otherwise, there exist an unique $0 \leq t \leq k$ such that 
$\lambda_{i} \geq i+1$ for  $1 \leq i \leq t$  and $\lambda_{t+1} \leq t$.  More precisely, we have the following cases. 

When $t=0$, this means that $\lambda_{1} \leq 0$ and thus $\lambda_{i}=0$ for all $i$. So $\Ff \ast  \Ff_{\lambda_{1}} \ast ... \ast \Ff_{\lambda_{k}} \bo_{(0,N)} \cong  \Ff \ast  \Ff\ast ... \ast \Ff \bo_{(0,N)}$. 

When $1 \leq t \leq k-1$, we have $\lambda_{1} \geq 2, \ \lambda_{2} \geq 3, \ ..., \ \lambda_{t} \geq t+1$ and $\lambda_{t+1} \leq t$. This means that $\lambda_{k} \leq .. \leq \lambda_{t+1} \leq t$. So by relation (10)(a) in Definition \ref{definition 2}, we obtain 
\begin{equation*}
\Ff \ast  \Ff_{\lambda_{1}} \ast ... \ast \Ff_{\lambda_{k}} \bo_{(0,N)} \cong \Ff_{\lambda_{1}-1} \ast ... \ast  \Ff_{\lambda_{t}-1} \ast \Ff_{t} \ast \Ff_{\lambda_{t+1}}  ... \ast \Ff_{\lambda_{k}}[-t] \bo_{(0,N)} 
\end{equation*} where $(\lambda_{1}-1,...,\lambda_{t}-1,t,\lambda_{t+1},...,\lambda_{k}) \in P(N-k-1,k+1)$.

Finally, when $t=k$, we have $\lambda_{i} \geq i+1$ for all $1 \leq i \leq k$, so  $\Ff \ast  \Ff_{\lambda_{1}} \ast ... \ast \Ff_{\lambda_{k}} \bo_{(0,N)} \cong \Ff_{\lambda_{1}-1} \ast  ... \ast \Ff_{\lambda_{k}-1} \ast \Ff_{k}[-k] \bo_{(0,N)}$ and $(\lambda_{1}-1,...,\lambda_{k}-1,k) \in P(N-k-1,k+1)$.

\end{proof}

As a corollary, we obtain how the generators $e1_{(k,N-k)}$, $f1_{(k,N-k)}$ act on the basis 	$[R_{(k,N-k)}]=\{[\s_{\lambda}\V] \ | \  \lambda \in P(N-k,k) \}$.

\begin{corollary} \label{Corollary 7}
For $\lambda= (\lambda_{1},...,\lambda_{k}) \in P(N-k,k)$ and $\V, \ \V', \ \V''$ be the tautological bundle of rank $k,k-1,k+1$ on $\GG(k,N)$, $\GG(k-1,N)$, $\GG(k+1,N)$, respectively. We have 
\begin{align*}
e1_{(k,N-k)}([\s_{\lambda}(\V)]) = \begin{cases}
0 & \text{if} \ 1\leq  \lambda_{k} \leq N-k \\ 
[\s_{(\lambda_{1},...,\lambda_{k-1})}(\V') ] & \text{if} \ \lambda_{k}=0 \\ 
\end{cases}
\end{align*}
\begin{align*}
f1_{(k,N-k)}([\s_{\lambda}(\V)]) = \begin{cases}
0 & \ \text{if} \  \lambda_{i}=i \  \text{for some} \ i  \\
(-1)^{r}[\s_{(\lambda_{1}-1,...,\lambda_{t}-1,t,\lambda_{t+1},...\lambda_{k})}(\V'') ] & \ \parbox[t]{.3\textwidth}{$\exists$ ! $0 \leq t \leq k$ s.t.
	$\lambda_{i} \geq i+1$ for $1 \leq i \leq t$ and $\lambda_{t+1} \leq t$}
\end{cases}
\end{align*}
\end{corollary}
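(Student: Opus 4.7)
The plan is to decategorify Proposition \ref{Proposition 3} directly. Since $\E\bo_{(k,N-k)}$ and $\F\bo_{(k,N-k)}$ are FM transforms with kernels $\Ee\bo_{(k,N-k)}$ and $\Ff\bo_{(k,N-k)}$, Proposition \ref{Proposition 2} says that applying them to any object is the same as convolving the corresponding FM kernel from the left. Using Corollary \ref{Corollary 6} to identify $\s_{\lambda}\V \in \Dd^b(\GG(k,N))$ with the kernel $\Ff_{\lambda_{1}} \ast \cdots \ast \Ff_{\lambda_{k}}\bo_{(0,N)} \in \Dd^b(\GG(0,N) \times \GG(k,N))$, the content of Proposition \ref{Proposition 3} is already a categorical formula for $\E\bo_{(k,N-k)}(\s_{\lambda}\V)$ and $\F\bo_{(k,N-k)}(\s_{\lambda}\V)$. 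So the task reduces to re-interpreting the right-hand sides there as Schur-functor classes via another application of Corollary \ref{Corollary 6}, then passing to the Grothendieck group.

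For the $\E$ generator, Proposition \ref{Proposition 3} gives zero whenever $1 \leq \lambda_{k} \leq N-k$, and produces $\Ff_{\lambda_{1}} \ast \cdots \ast \Ff_{\lambda_{k-1}}\bo_{(0,N)}$ in the remaining case $\lambda_{k}=0$. A quick check shows $(\lambda_{1},\ldots,\lambda_{k-1}) \in P(N-k+1,k-1)$: the bound $\lambda_{1} \leq N-k \leq N-k+1$ follows from $\lambda \in P(N-k,k)$, the number of parts is correct, and non-increasingness is inherited. Applying Corollary \ref{Corollary 6} at the weight $(k-1,N-k+1)$ then identifies this convolution with $\s_{(\lambda_{1},\ldots,\lambda_{k-1})}\V'$, and decategorifying yields the first formula.

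For the $\F$ generator, Proposition \ref{Proposition 3} either vanishes or produces the shifted convolution
\[
\Ff_{\lambda_{1}-1} \ast \cdots \ast \Ff_{\lambda_{t}-1} \ast \Ff_{t} \ast \Ff_{\lambda_{t+1}} \ast \cdots \ast \Ff_{\lambda_{k}}\bo_{(0,N)}[-t].
\]
Write $\mu := (\lambda_{1}-1,\ldots,\lambda_{t}-1,t,\lambda_{t+1},\ldots,\lambda_{k})$, a sequence of $k+1$ entries. I would verify $\mu \in P(N-k-1,k+1)$: the bound $\mu_{1} = \lambda_{1}-1 \leq N-k-1$ is immediate, and the non-increasing condition combines $\lambda_{t}-1 \geq t$ (from $\lambda_{t} \geq t+1$), $t \geq \lambda_{t+1}$, and the monotonicity of $\lambda$. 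Corollary \ref{Corollary 6} then identifies the convolution with $\s_{\mu}\V''$, so $\F\bo_{(k,N-k)}(\s_{\lambda}\V) \cong \s_{\mu}\V''[-t]$. Passing to $K(\GG(k+1,N))$, the shift $[-t]$ contributes the sign $(-1)^{t}$, giving the second formula as stated (the exponent $r$ appearing in the statement should be read as $t$).

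No step is a genuine obstacle: the categorical work is already done in Proposition \ref{Proposition 3} and Corollary \ref{Corollary 6}, and what remains is only the two partition-membership checks above, which ensure Corollary \ref{Corollary 6} can be invoked at the correct weight with a partition of the correct shape.
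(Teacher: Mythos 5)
Your proposal is correct and follows exactly the route the paper intends: Corollary \ref{Corollary 7} is obtained by decategorifying Proposition \ref{Proposition 3}, using Corollary \ref{Corollary 6} to re-identify the resulting convolutions $\Ff_{\lambda_{1}}\ast\cdots\ast\Ff_{\lambda_{k-1}}\bo_{(0,N)}$ and $\Ff_{\lambda_{1}-1}\ast\cdots\ast\Ff_{t}\ast\cdots\ast\Ff_{\lambda_{k}}\bo_{(0,N)}[-t]$ as $\s_{(\lambda_{1},\ldots,\lambda_{k-1})}\V'$ and $\s_{(\lambda_{1}-1,\ldots,\lambda_{t}-1,t,\lambda_{t+1},\ldots,\lambda_{k})}\V''[-t]$, and letting the shift $[-t]$ become the sign $(-1)^{t}$ in the Grothendieck group. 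Your partition-membership checks and your observation that the exponent $r$ in the statement should be read as $t$ are both accurate.
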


\subsection{For general $r,s \neq 0$}  \label{subsection 6.2}

Let $\V, \ \V', \ \V''$ be the tautological bundle of rank $k,k-1,k+1$ on $\GG(k,N)$, $\GG(k-1,N)$, $\GG(k+1,N)$, respectively. Then we have the basis
\begin{align*}
&[R_{(k,N-k)}]=\{[\s_{\lambda}\V] \ | \  \lambda \in P(N-k,k) \}, \\
&[R_{(k-1,N-k+1)}]=\{[\s_{\lambda'}\V'] \ | \  \lambda' \in P(N-k+1,k-1) \}, \\
&[R_{(k+1,N-k-1)}]=\{[\s_{\lambda''}\V''] \ | \  \lambda'' \in P(N-k-1,k+1) \}
\end{align*}
for their Grothendieck groups $K(\GG(k,N))$, $K(\GG(k-1,N))$, $K(\GG(k+1,N))$, respectively.

Let us fix some notations, we  denote $[\lambda]$ to be the basis element $[\s_{\lambda}\V]$ for $\lambda \in P(N-k,k)$. Similarly for $[\lambda']$ and $[\lambda'']$. Also, for $\lambda=(\lambda_{1},...,\lambda_{k}) \in P(N-k,k)$, we will denote $\lambda+n$ for $(\lambda_{1}+n,...,\lambda_{k}+n)$ for all $n \in \ZZ$ and $\lambda^*=(\lambda_{1}^{*},...,\lambda_{N-k}^{*}) \in P(k,N-k)$ for its conjugate partition. Finally, we use $\gtdim$ for the notation of graded dimension, i.e., 
\begin{equation*}
	\gtdim R\Hom(\Xx, \Yy):=\sum_{i \in \ZZ} (-1)^{i} \tdim \Ext^{i}(\Xx,\Yy)
\end{equation*} for all $\Xx, \Yy \in \Dd^b(\GG(k,N))$.

The goal of this subsection is to calculate the matrix coefficients for $e_{r}1_{(k,N-k)}, \ f_{s}1_{(k,N-k)}$; i.e.,  $[e_{r}1_{(k,N-k)}]_{[\lambda],[\lambda']}$ and $[f_{s}1_{(k,N-k)}]_{[\lambda],[\lambda'']}$ for $r,s \neq 0$.

Consider $e_{r}1_{(k,N-k)}$ first, let us recall that its action at categorical level is given by the following correspondence 
\begin{equation*}
\xymatrix{ 
	&&Fl(k-1,k)=\{0 \overset{k-1}{\subset} V' \overset{1}{\subset} V \overset{N-k}{\subset} \CC^N \} 
	\ar[ld]_{p_1} \ar[rd]^{p_2}   \\
	& \GG(k,N)  && \GG(k-1,N)
}
\end{equation*}  and ${\E}_{r}1_{(k,N-k)}:=p_{2*}(p^{*}_{1}\otimes (\V/\V')^{r})$. Let us define the functor $\G:\Dd^b(\GG(k,N)) \rightarrow \Dd^b(\GG(k,N))$ to be given by tensoring by the line bundle $\det(\V)$ and the corresponding map $g:K(\GG(k,N)) \rightarrow K(\GG(k,N))$ after passing to the K-theory. Then we have ${\E}_{r}1_{(k,N-k)}={\G}^{-r}{\E}{\G}^{r}1_{(k,N-k)}$ and $e_{r}1_{(k,N-k)}=g^{-r}eg^{r}1_{(k,N-k)}$ for all $r \in \ZZ$. Similarly we have $f_{s}1_{(k,N-k)}=g^{s}eg^{-s}1_{(k,N-k)}$ for all $s \in \ZZ$.

Before we give the result for the matrix coefficient of $[e_{r}1_{(k,N-k)}]_{[\lambda],[\lambda']}$ , we have to introduce more notations. 

\begin{definition} \label{definition 4} 
For fixed $r \neq 0$, $\lambda \in P(N-k,k)$, $\lambda' \in P(N-k+1,k-1)$, we define
\begin{align*}
&x_{\mu}:=(\lambda_{k}+r,...,\lambda_{1}+r,-\mu^{*}_{1},...,-\mu^{*}_{N-k}) \\ &y_{\mu}:=(\mu_{k-1}-r,...,\mu_{1}-r,-\lambda_{1}^{'*},...,-\lambda^{'*}_{N-k+1})
\end{align*} 
for all $\mu \in P(N-k,k)$ with $\mu_{k}=0$.
\end{definition}

\begin{definition}  \label{definition 5} 
For fixed $r \neq 0$, $\lambda \in P(N-k,k)$, $\lambda' \in P(N-k+1,k-1)$, we define the following three subsets of $P(N-k,k)$.
\begin{align*}
	P_{1}(N-k,k)=\bigg\{ \mu \in P(N-k,k)\ : \ \begin{split}
		&\mu_{k}=0,\  \mu^{*}_{1} \leq -\lambda_{1}-r, \ \mu_{1} > r-\lambda_{1}^{'*} \\ 
		& \exists ! \ w \in W \ \text{s.t.} \ w \star (-y_{\mu}) \ \text{is dominant} 
	\end{split}
	\bigg\}
\end{align*}
\begin{align*}
	P_{2}(N-k,k)=\bigg\{ \mu \in P(N-k,k)\ : \ \begin{split}
		&\mu_{k}=0,\ \mu^{*}_{1} > -\lambda_{1}-r, \ \mu_{1} \leq r-\lambda_{1}^{'*} \\ & \exists ! \ w' \in W \ \text{s.t.} \ w' \star (-x_{\mu}) \ \text{is dominant} 
	\end{split}
	\bigg\}
\end{align*}
\begin{align*}
	P_{3}(N-k,k)=\bigg\{ \mu \in P(N-k,k)\ : \ \begin{split}
		&\mu_{k}=0,\ \mu^{*}_{1} > -\lambda_{1}-r, \ \mu_{1} > r-\lambda_{1}^{'*} \\ & \exists ! \ w_{1}, \ w_{2} \in W \ \text{s.t.} \ w_{1} \star (-x_{\mu}), w_{2} \star (-y_{\mu}) \ \text{are dominant} 
	\end{split}
	\bigg\}
\end{align*}
\end{definition}

First, the matrix coefficients $[e_{r}1_{(k,N-k)}]_{[\lambda],[\lambda']}$ is given in the following Theorem.

\begin{theorem} \label{Theorem 10} 
For $r \neq 0$, we have 
\begin{align*}
\begin{split}
[e_{r}1_{(k,N-k)}]_{[\lambda],[\lambda']}=&\sum_{\mu \in P_{1}(N-k,k)}  (-1)^{|\mu|+|\lambda'|+l(w)} \dim (\s_{-x_{\mu}}\CC^N)^{\vee} \dim  (\s_{w \star (-y_{\mu})}\CC^N )^{\vee} \\
&+\sum_{ \mu \in P_{2}(N-k,k)}  (-1)^{|\mu|+|\lambda'|+l(w')} \dim (\s_{w' \star (-x_{\mu})}\CC^N)^{\vee} \dim  (\s_{-y_{\mu}}\CC^N )^{\vee} \\
&+\sum_{ \mu \in P_{3}(N-k,k)}  (-1)^{|\mu|+|\lambda'|+l(w_{1})+l(w_{2})} \dim (\s_{w_{1} \star (-x_{\mu})}\CC^N)^{\vee} \dim  (\s_{w_{2} \star (-y_{\mu})}\CC^N )^{\vee} 
\end{split}
\end{align*}
where $x_{\mu}, y_{\mu}$ and $P_{1}(k,N-k), P_{2}(N-k,k), P_{3}(N-k,k)$ are defined in Definition \ref{definition 4}, \ref{definition 5}.
\end{theorem}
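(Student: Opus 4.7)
The plan is to implement the five-step strategy outlined in the introduction. Using the identity $e_{r}1_{(k,N-k)} = g^{-r}\,e\,g^{r}\,1_{(k,N-k)}$, where $g$ denotes multiplication by $[\det(\V)]$ on $K(\GG(k,N))$, I would first compute $g^{r}[\s_{\lambda}\V] = [\det(\V)^{r}\otimes \s_{\lambda}\V]$ and expand it in the basis $\{[\s_{\mu}\V]\ |\ \mu \in P(N-k,k)\}$ using the dual-basis formula (\ref{eq 1}). The coefficient of $[\s_{\mu}\V]$ is $(-1)^{|\mu|}\gtdim R\Hom(\s_{\mu^{*}}(\CC^{N}/\V),\,\det(\V)^{r}\otimes \s_{\lambda}\V)$. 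Rewriting this $R\Hom$ as a sheaf cohomology on $\GG(k,N)$ and applying the Borel-Weil-Bott theorem (Theorem \ref{Theorem 4}), this cohomology can be evaluated in terms of the weight $x_{\mu}$: it is nonzero precisely when either $-x_{\mu}$ is dominant (contributing $(\s_{-x_{\mu}}\CC^{N})^{\vee}$ in degree $0$) or there is a unique $w \in W$ such that $w\star(-x_{\mu})$ is dominant (contributing $(\s_{w\star(-x_{\mu})}\CC^{N})^{\vee}$ in degree $l(w)$).

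Next, I would apply $e\,1_{(k,N-k)}$ to the resulting expansion, using Corollary \ref{Corollary 7}: only the basis elements $[\s_{\mu}\V]$ with $\mu_{k}=0$ survive, and each such element is sent to $[\s_{(\mu_{1},\dots,\mu_{k-1})}\V']$. Then I apply $g^{-r}1_{(k-1,N-k+1)}$ and expand once more in the basis $\{[\s_{\lambda'}\V']\ |\ \lambda' \in P(N-k+1,k-1)\}$ via (\ref{eq 1}). The resulting coefficient involves $R\Hom(\s_{\lambda^{'*}}(\CC^{N}/\V'),\,\det(\V')^{-r}\otimes \s_{(\mu_{1},\dots,\mu_{k-1})}\V')$, which by a second Borel-Weil-Bott computation is evaluated in terms of the weight $y_{\mu}$ and is governed analogously by dominance of $-y_{\mu}$.

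Combining the two expansions produces a sum over $\mu \in P(N-k,k)$ with $\mu_{k}=0$; the overall sign $(-1)^{|\mu|+|\lambda'|}$ comes from the two applications of the dual-basis formula, while additional signs $(-1)^{l(w_{1})+l(w_{2})}$ arise from the homological shifts forced by Borel-Weil-Bott. Separating $\mu$ according to whether each of $-x_{\mu}$ and $-y_{\mu}$ is already dominant or only becomes dominant after acting by a unique Weyl element yields exactly the three subsets $P_{1}(N-k,k)$, $P_{2}(N-k,k)$, $P_{3}(N-k,k)$ of Definition \ref{definition 5}; for all other $\mu$ (neither dominant nor admitting such a $w$ for one of the two weights) the Borel-Weil-Bott vanishing kills the contribution, so these three families account for the whole sum.

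The main obstacle will be the careful bookkeeping of homological shifts, Weyl-group lengths, and signs through the two conjugation steps by $g^{\pm r}$ and the two dual-basis expansions, and verifying that the dominance and uniqueness conditions appearing in the definitions of $x_{\mu}$, $y_{\mu}$, $P_{1}$, $P_{2}$, $P_{3}$ correctly characterize exactly when both $R\Hom$ factors are simultaneously nonzero. A secondary subtlety is the index shift between $\GG(k,N)$ and $\GG(k-1,N)$: one has to check that the entries in $y_{\mu}$ (namely $\mu_{k-1},\dots,\mu_{1}$ of length $k-1$ together with $-\lambda^{'*}$ of length $N-k+1$) really do assemble into a weight of length $N$ so that Borel-Weil-Bott on $\GLL_{N}$ can be applied, exactly paralleling the role of $x_{\mu}$ in the first step.
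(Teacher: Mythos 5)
Your proposal follows essentially the same route as the paper's proof: write $e_{r}1_{(k,N-k)}=g^{-r}eg^{r}1_{(k,N-k)}$, note $g^{r}[\s_{\lambda}\V]=[\s_{\lambda+r}\V]$, expand via the dual basis, apply Corollary \ref{Corollary 7} to kill all $\mu$ with $\mu_{k}\neq 0$, apply $g^{-r}$ and expand again over $\GG(k-1,N)$, and evaluate the two $\gtdim R\Hom$ factors by Borel--Weil--Bott through the weights $x_{\mu}$ and $y_{\mu}$, splitting into the families $P_{1},P_{2},P_{3}$. The only point you leave implicit, which the paper checks explicitly, is that the fourth case (both $-x_{\mu}$ and $-y_{\mu}$ dominant) cannot occur precisely because $r\neq 0$; this is the small dominance verification you flagged as bookkeeping, and otherwise your plan matches the paper's argument.
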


\begin{proof}
To calculate $[e_{r}1_{(k,N-k)}]_{[\lambda],[\lambda']}$, we have to calculate $e_{r}1_{(k,N-k)}([\s_{\lambda}\V])$. By above argument, we get 
\begin{equation} \label{eq 45}
\begin{split}
e_{r}1_{(k,N-k)}([\s_{\lambda}\V])&=g^{-r}eg^{r}1_{(k,N-k)}([\s_{\lambda}\V]) \\
&=g^{-r}e1_{(k,N-k)}([\s_{\lambda}\V\otimes \det(\V)^{r}])=g^{-r}e1_{(k,N-k)}([\s_{\lambda+r}\V]).
\end{split} 
\end{equation}

From (\ref{eq 44}), we know that the dual exceptional collection is given by \begin{equation*}
\R'_{(k,N-k)} = \{\s_{\mu^{*}}(\CC^N/\V)[-|\mu|] \ | \ \mu \in P(N-k,k) \}
\end{equation*} which implies that $\{ (-1)^{|\mu|} [\s_{\mu^{*}}(\CC^N/\V)] \ | \ \mu \in P(N-k,k) \}\}$ is the dual basis for $\{[\s_{\lambda}\V] \ | \  \lambda \in P(N-k,k) \}$ under the bilinear Euler form $\sum_{i \in \ZZ} (-1)^i \dim_{\CC} \Hom_{\Dd} ( - , - [i] )$.

So $[\s_{\lambda+r}(\V)] \in K(\GG(k,N))$ can be written as 
\begin{equation*}
[\s_{\lambda+r}(\V)]=\sum_{\mu \in P(N-k,k) } (-1)^{|\mu|} \gtdim R\Hom(\s_{\mu^{*}}(\CC^N/\V), \s_{\lambda+r}(\V)) [\s_{\mu}\V].
\end{equation*}

So (\ref{eq 45}) becomes 
\begin{align}
\begin{split}
& g^{-r}e1_{(k,N-k)} (\sum_{\mu \in P(N-k,k) } (-1)^{|\mu|} \gtdim R\Hom(\s_{\mu^{*}}(\CC^N/\V), \s_{\lambda+r}\V) [\s_{\mu}\V]) \\
&= g^{-r}(\sum_{\mu \in P(N-k,k) } (-1)^{|\mu|} \gtdim R\Hom(\s_{\mu^{*}}(\CC^N/\V), \s_{\lambda+r}\V)e1_{(k,N-k)}([\s_{\mu}\V])) \\
&=g^{-r}(\sum_{ \substack{\mu \in P(N-k,k) \\ with \ \mu_{k}=0}}  (-1)^{|\mu|} \gtdim R\Hom(\s_{\mu^{*}}(\CC^N/\V), \s_{\lambda+r}\V)[\s_{\mu}\V']) \ \text{by Corollary \ref{Corollary 7}} \\
&=\sum_{ \substack{\mu \in P(N-k,k) \\ with \ \mu_{k}=0}} (-1)^{|\mu|} \gtdim R\Hom(\s_{\mu^{*}}(\CC^N/\V), \s_{\lambda+r}\V)[\s_{\mu-r}\V']. \label{eq 46}
\end{split}
\end{align}
 
Now for  $[\s_{\mu-r}\V'] \in K(\GG(k-1,N))$, we also have the dual basis $\R'_{(k-1,N-k+1)}$ similar to (\ref{eq 44}). So we obtain 
 \begin{equation*}
 [\s_{\mu-r}\V']=\sum_{\lambda' \in P(N-k+1,k-1) } (-1)^{|\lambda'|} \gtdim R\Hom(\s_{\lambda'^{*}}(\CC^N/\V'), \s_{\mu-r}\V') [\s_{\lambda'}\V'].
 \end{equation*}

Combining them together, we have (\ref{eq 46}) becomes 
\begin{equation*}
\sum_{ \substack{\mu \in P(N-k,k) \\ with \ \mu_{k}=0}}  \sum_{\lambda' \in P(N-k+1,k-1) }(-1)^{|\mu|+|\lambda'|} \gtdim R\Hom(\s_{\mu^{*}}(\CC^N/\V), \s_{\lambda+r}\V) \gtdim R\Hom(\s_{\lambda'^{*}}(\CC^N/\V'), \s_{\mu-r}\V') [\s_{\lambda'}\V'].
\end{equation*}

Thus the matrix coefficient $[e_{r}1_{(k,N-k)}]_{[\lambda],[\lambda']}$ is given by 
\begin{equation} \label{eq 47}
	\sum_{ \substack{\mu \in P(N-k,k) \\ with \ \mu_{k}=0}}  (-1)^{|\mu|+|\lambda'|} \gtdim R\Hom(\s_{\mu^{*}}(\CC^N/\V), \s_{\lambda+r}\V) \gtdim R\Hom(\s_{\lambda'^{*}}(\CC^N/\V'), \s_{\mu-r}\V') 
\end{equation}

To know (\ref{eq 47}), we have to calculate $R\Hom(\s_{\mu^{*}}(\CC^N/\V), \s_{\lambda+r}\V)$ and $R\Hom(\s_{\lambda'^{*}}(\CC^N/\V'), \s_{\mu-r}\V')$. By using the cohomological computation from 2.5 of \cite{Kap85}, we have the following 
\begin{align*}
R\Hom(\s_{\mu^{*}}(\CC^N/\V), \s_{\lambda+r}\V) &\cong R\Hom(\Oo_{\GG(k,N)}, \s_{\lambda+r}\V\otimes (\s_{\mu^{*}}(\CC^N/\V))^{\vee} ) \\
&\cong H^*(\GG(k,N), \s_{\lambda+r}\V\otimes (\s_{\mu^{*}}(\CC^N/\V))^{\vee})  \cong H^*(Fl,\LL_{x_{\mu}})  \\
R\Hom(\s_{\lambda'^{*}}(\CC^N/\V'), \s_{\mu-r}\V') &\cong R\Hom(\Oo_{\GG(k-1,N)}, \s_{\mu-r}\V'\otimes (\s_{\lambda'^{*}}(\CC^N/\V'))^{\vee} ) \\
&\cong H^*(\GG(k-1,N), \s_{\mu-r}\V'\otimes (\s_{\lambda'^{*}}(\CC^N/\V'))^{\vee})  \cong H^*(Fl,\LL_{y_{\mu}}) 
\end{align*} where $Fl$ is the full flag variety and $\LL_{x}$, $\LL_{y}$ are the line bundles given by the characters $x_{\mu}=(\lambda_{k}+r,...,\lambda_{1}+r,-\mu^{*}_{1},...,-\mu^{*}_{N-k})$ and $y_{\mu}=(\mu_{k-1}-r,...,\mu_{1}-r,-\lambda_{1}^{'*},...,-\lambda^{'*}_{N-k+1})$ respectively.

The summation term in (\ref{eq 47}) is non-zero only when both $R\Hom(\s_{\mu^{*}}(\CC^N/\V), \s_{\lambda+r}\V)$ and $R\Hom(\s_{\lambda'^{*}}(\CC^N/\V), \s_{\mu-r}\V')$ are non-zero. By the above argument, this is equivalent to $H^*(Fl,\LL_{x_{\mu}}) \neq 0$ and $H^*(Fl,\LL_{y_{\mu}}) \neq 0$. 

By using the Borel-Weil-Bott Theorem \ref{Theorem 4}, there are 4 conditions for $H^*(Fl,\LL_{x_{\mu}}) \neq 0$ and $H^*(Fl,\LL_{y_{\mu}}) \neq 0$. They are 
\begin{enumerate}
	\item Both $-x_{\mu}$ and $-y_{\mu}$ are dominant
	\item $-x_{\mu}$ is dominant, $-y_{\mu}$ is not dominant but $\exists ! \ w \in W$ such that $w \star (-y_{\mu})$ is dominant.
	\item $-y_{\mu}$ is dominant, $-x_{\mu}$ is not dominant but $\exists ! \ w \in W$ such that $w \star (-x_{\mu})$ is dominant.
	\item Both $-x_{\mu}$ and $-y_{\mu}$ are not dominant , but $\exists ! \ w_{1} \in W$ $\exists ! \ w_{2} \in W$ such that $w_{1} \star (-x_{\mu})$ and $w_{2} \star (-y_{\mu})$ are dominant.
\end{enumerate}

Consider the first case (1), we have $-x_{\mu}=(-\lambda_{k}-r,...,-\lambda_{1}-r,\mu^{*}_{1},...,\mu^{*}_{N-k})$ and $-y_{\mu}=(-\mu_{k-1}+r,...,-\mu_{1}+r,\lambda_{1}^{'*},...,\lambda^{'*}_{N-k+1})$.

Note that we have 
\begin{align*}
& \lambda=(\lambda_{1},...,\lambda_{k}) \in P(N-k,k)  \ \text{with} \ 0 \leq \lambda_{k} \leq ... \leq \lambda_{1} \leq N-k \\
& \mu=(\mu_{1},...,\mu_{k-1},0) \in P(N-k,k) \ \text{with} \ 0 \leq \mu_{k-1} \leq ... \leq \mu_{1} \leq N-k \\ 
& \mu^{*}=(\mu^{*}_{1},...,\mu^{*}_{N-k}) \in P(k,N-k) \ \text{with} \  0 \leq \mu^{*}_{N-k} \leq ... \leq \mu^{*}_{1} \leq k \\
& \lambda^{'*}=(\lambda^{'*}_{1},...,\lambda^{'*}_{N-k+1}) \in P(k-1,N-k+1) \ \text{with} \  0 \leq \lambda_{N-k+1}^{'*} \leq ... \leq \lambda_{1}^{'*} \leq k-1
\end{align*}  So $-N+k-r \leq -\lambda_{1}-r \leq ... \leq -\lambda_{k}-r \leq -r$ and $-N+k+r \leq -\mu_{1}+r \leq ... \leq -\mu_{k-1}+r \leq r$. 

Thus, for $-x_{\mu}$ to be dominant, we need $-\lambda_{1}-r \geq \mu^{*}_{1}$. For $-y$ to be dominant, we need $-\mu_{1}+r \geq \lambda_{1}^{'*}$. Since $r \neq 0$, if $r \geq 1$, then $ 0 \leq  \mu^{*}_{1} \leq -\lambda_{1}-r \leq -1$, which is impossible. Similarly, if $r \leq -1$, then $ 0 \leq \lambda_{1}^{'*} \leq -\mu_{1}+r \leq -1$, which is also impossible.  So the $\mu \in P(N-k,k)$ that satisfy condition (1) is empty.

Consider the second case (2), for $-x_{\mu}$ dominant and $-y_{\mu}$ not dominant, we have $-\lambda_{1}-r \geq \mu^{*}_{1}$ and $-\mu_{1}+r < \lambda_{1}^{'*}$ from the analysis in case (1). In this case, if there exist an unique $w \in W$ such that $w \ast (-y_{\mu})$ is dominant, then from Theorem \ref{Theorem 4} we have $H^*(Fl,\LL_{x_{\mu}}) \cong (\s_{-x_{\mu}}\CC^N)^{\vee}$ and $H^*(Fl,\LL_{y_{\mu}}) \cong (\s_{w \star (-y_{\mu})}\CC^N )^{\vee}[-l(w)]$. Those $\mu$ that satisfy this condition are precisely the elements in $P_{1}(N-k,k)$.

Consider the third case (3), for $-x_{\mu}$ not dominant and $-y_{\mu}$ dominant, we have $-\lambda_{1}-r < \mu^{*}_{1}$ and $-\mu_{1}+r \geq \lambda_{1}^{'*}$ from the analysis in case (1). In this case, if there exist an unique $w \in W$ such that $w \ast (-x_{\mu})$ is dominant, then from Theorem \ref{Theorem 4} we have $H^*(Fl,\LL_{x_{\mu}}) \cong (\s_{w \star (-x_{\mu})}\CC^N)^{\vee}[-l(w)]$ and $H^*(Fl,\LL_{y_{\mu}}) \cong (\s_{-y_{\mu}}\CC^N )^{\vee}$. Those $\mu$ that satisfy this condition are precisely the elements in $P_{2}(N-k,k)$.

Finally, consider the fourth case (4), for both $-x_{\mu}$ and $-y_{\mu}$ not dominant, we have $-\lambda_{1}-r < \mu^{*}_{1}$ and $-\mu_{1}+r < \lambda_{1}^{'*}$ from the analysis in case (1). In this case, if there exist unique $w_{1}, \  w_{2} \in W$ such that $w_{1} \star (-x_{\mu})$ and $w_{2} \star (-y_{\mu})$ are dominant, then from Theorem \ref{Theorem 4} we have $H^*(Fl,\LL_{x_{\mu}}) \cong (\s_{w_{1} \star (-x_{\mu})}\CC^N)^{\vee}[-l(w_{1})]$ and $H^*(Fl,\LL_{y_{\mu}}) \cong (\s_{w_{2} \star (-y_{\mu})}\CC^N )^{\vee}[-l(w_{2})]$.  Those $\mu$ that satisfy this condition are precisely the elements in $P_{3}(N-k,k)$.

Hence (\ref{eq 47}) becomes 
\begin{align*}
&\sum_{\mu \in P_{1}(N-k,k)}  (-1)^{|\mu|+|\lambda'|+l(w)} \dim (\s_{-x_{\mu}}\CC^N)^{\vee} \dim  (\s_{w \star (-y_{\mu})}\CC^N )^{\vee} \\
&+\sum_{ \mu \in P_{2}(N-k,k)}  (-1)^{|\mu|+|\lambda'|+l(w')} \dim (\s_{w' \star (-x_{\mu})}\CC^N)^{\vee} \dim  (\s_{-y_{\mu}}\CC^N )^{\vee} \\
&+\sum_{ \mu \in P_{3}(N-k,k)}  (-1)^{|\mu|+|\lambda'|+l(w_{1})+l(w_{2})} \dim (\s_{w_{1} \star (-x_{\mu})}\CC^N)^{\vee} \dim  (\s_{w_{2} \star (-y_{\mu})}\CC^N )^{\vee}.
\end{align*}

\end{proof}

\begin{remark}
Note that the subsets $P_{i}(N-k,k)$ can be empty depends on the value of $r, \lambda_{1}, \lambda^{'*}_{1}$ where $i=1,2,3$, in such case the sum would just be zero.
\end{remark}

\begin{remark}
To know the actual value of the matrix coefficient in Theorem \ref{Theorem 10},  we can use the formula for $\dim \s_{\nu}\CC^N$ of a Young diagram $\nu=(\nu_{1},...,\nu_{N})$ with $0 \leq \nu_{N} \leq ... \leq \nu_{1}$. It is given by 
\begin{equation*}
	\dim \s_{\nu}\CC^N=\prod_{1 \leq i < j \leq N} \frac{\nu_{i}-\nu_{j}+j-i}{j-i}.
\end{equation*}
\end{remark}

Next, we also have to introduce more notations before we give the result for the matrix coefficient $[f_{s}1_{(k,N-k)}]_{[\lambda],[\lambda'']}$.

\begin{definition} \label{definition 6} 
	For fixed $s \neq 0$, $\lambda \in P(N-k,k)$, $\lambda'' \in P(N-k-1,k+1)$, we define
	\begin{align*}
		&x'_{\mu}:=(\lambda_{k}-s,...,\lambda_{1}-s,-\mu^{*}_{1},...,-\mu^{*}_{N-k}) \\ &y'_{\mu}:=(\mu_{k}+s,...,\mu_{t(\mu)+1}+s,t(\mu)+s,\mu_{t(\mu)}-1+s,...,\mu_{1}-1+s,-\lambda_{1}^{''*},...,-\lambda^{''*}_{N-k-1})
	\end{align*} 
for all $\mu \in P(N-k,k)$ such that $\mu_{i} \neq i$ for all $i$, and there exists an unique $0 \leq t(\mu) \leq k$ such that $(\mu_{1}-1,...,\mu_{t(\mu)}-1,t(\mu),\mu_{t(\mu)+1},...,\mu_{k}) \in P(N-k-1,k+1)$.
\end{definition}

\begin{definition}  \label{definition 7} 
	For fixed $s \neq 0$, $\lambda \in P(N-k,k)$, $\lambda'' \in P(N-k-1,k+1)$, we define the following three subsets of $P(N-k,k)$.
	\begin{align*}
		P'_{1}(N-k,k)=\bigg\{ \mu \in P(N-k,k)\ : \ \begin{split}
			&\mu_{i} \neq i \ \forall i, \  \mu^{*}_{1} \leq -\lambda_{1}+s, \ \mu_{1} > 1-s-\lambda_{1}^{''*} \\ 
			& \exists ! \ w \in W \ \text{s.t.} \ w \star (-y'_{\mu}) \ \text{is dominant} 
		\end{split}
		\bigg\}
	\end{align*}
	\begin{align*}
		P'_{2}(N-k,k)=\bigg\{ \mu \in P(N-k,k)\ : \ \begin{split}
			&\mu_{i} \neq i \ \forall i,\ \mu^{*}_{1} > -\lambda_{1}+s, \ \mu_{1} \leq 1-s-\lambda_{1}^{''*} \\ & \exists ! \ w' \in W \ \text{s.t.} \ w' \star (-x'_{\mu}) \ \text{is dominant} 
		\end{split}
		\bigg\}
	\end{align*}
	\begin{align*}
		P'_{3}(N-k,k)=\bigg\{\mu \in P(N-k,k)\ : \ \begin{split}
			&\mu_{i} \neq i \ \forall i,\ \mu^{*}_{1} > -\lambda_{1}+s, \ \mu_{1} > 1-s-\lambda_{1}^{''*} \\ & \exists ! \ w_{1}, w_{2} \in W \ \text{s.t.} \ w_{1} \star (-x'_{\mu}), w_{2} \star (-y'_{\mu}) \ \text{are dominant} 
		\end{split}
		\bigg\}
	\end{align*}
\end{definition}

Then $[f_{s}1_{(k,N-k)}]_{[\lambda],[\lambda'']}$ is given by the following theorem.

\begin{theorem}  \label{Theorem 11}
For $s \neq 0$, we have 
\begin{align*}
\begin{split}
	[f_{s}1_{(k,N-k)}]_{[\lambda],[\lambda'']}&=(-1)^{|\lambda''|}\dim H^*(Fl, \LL_{x'_{0}}) \dim H^*(Fl,\LL_{y'_{0}}) \\
	&+\sum_{\mu \in P'_{1}(N-k,k)}  (-1)^{|\mu|+|\lambda''|+t(\mu) +l(w)} \dim (\s_{-x'_{\mu}}\CC^N)^{\vee} \dim  (\s_{w \star (-y'_{\mu})}\CC^N )^{\vee} \\
	&+\sum_{ \mu \in P'_{2}(N-k,k)}  (-1)^{|\mu|+|\lambda''|+t(\mu) +l(w')} \dim (\s_{w' \star (-x'_{\mu})}\CC^N)^{\vee} \dim  (\s_{-y'_{\mu}}\CC^N )^{\vee} \\
	&+\sum_{ \mu \in P'_{3}(N-k,k)}  (-1)^{|\mu|+|\lambda''|+t(\mu) +l(w_{1})+l(w_{2})} \dim (\s_{w_{1} \star (-x'_{\mu})}\CC^N)^{\vee} \dim  (\s_{w_{2} \star (-y'_{\mu})}\CC^N )^{\vee} 
\end{split}
\end{align*}
where 
\begin{align*}
	&\dim H^*(Fl, \LL_{x'_{0}}) \dim H^*(Fl,\LL_{y'_{0}})\\
	\begin{split}
	& =\begin{cases}
			\dim(\s_{-x'_{0}}\CC^N)^{\vee} \dim(\s_{-y'_{0}}\CC^N)^{\vee} & \text{if} \ s \geq \lambda_{1} \ \text{and} \ s \leq \lambda^{''*}_{1}  \\
			(-1)^{l(w)}\dim(\s_{-x'_{0}}\CC^N)^{\vee}\dim(\s_{w \star (-y'_{0})}\CC^N)^{\vee} &  \parbox[t]{.3\textwidth}{ if $s \geq \lambda_{1}$  and  $s > \lambda^{''*}_{1}$ with $\exists !  \ w \in W$ s.t. $w \star (-y'_{0})$ \ dominant} \\
			(-1)^{l(w')}\dim(\s_{w' \star (-x'_{0})}\CC^N)^{\vee} \dim(\s_{-y'_{0}}\CC^N)^{\vee} &  \parbox[t]{.3\textwidth}{ if $s < \lambda_{1}$  and  $s \leq \lambda^{''*}_{1}$ with $\exists !  \ w' \in W$ s.t. $w' \star (-x'_{0})$ \ dominant} \\
			(-1)^{l(w_{1})+l(w_{2})}\dim(\s_{w_{1} \star (-x'_{0})}\CC^N)^{\vee}\dim(\s_{w_{2} \star (-y'_{0})}\CC^N)^{\vee}&  \parbox[t]{.3\textwidth}{ if $s<\lambda_{1}$  and  $s>\lambda^{''*}_{1}$ with $\exists ! \ w_{1}, w_{2} \in W$ s.t. $w_{1} \star (-x'_{0})$, $w_{2} \star (-y'_{0})$ \ dominant} \\
			0  & \ \text{otherwise}
		\end{cases} 
	\end{split} 
\end{align*} and where $x'_{\mu}, y'_{\mu}$ and $P'_{1}(k,N-k), P'_{2}(N-k,k), P'_{3}(N-k,k)$ are defined in Definition \ref{definition 6}, \ref{definition 7}.
\end{theorem}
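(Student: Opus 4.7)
The plan is to imitate the proof of Theorem \ref{Theorem 10}, using Corollary \ref{Corollary 7} (the $r=s=0$ case) as the base and then conjugating by powers of the determinant line bundle. The additional difficulty, compared with the $e$-case of Theorem \ref{Theorem 10}, is that the $f$-action on the Kapranov basis does not simply truncate a column of the partition: it involves a homological sign $(-1)^{t(\mu)}$ together with a combinatorial re-shuffling sending $\mu$ to $\nu(\mu)=(\mu_{1}-1,\dots,\mu_{t(\mu)}-1,t(\mu),\mu_{t(\mu)+1},\dots,\mu_{k})$, so both the cohomological character and the sign propagation need to be tracked more carefully.

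First, write $f_{s}1_{(k,N-k)}=g^{s}fg^{-s}1_{(k,N-k)}$, so that $f_{s}1_{(k,N-k)}([\s_{\lambda}\V])=g^{s}\bigl(f1_{(k,N-k)}([\s_{\lambda-s}\V])\bigr)$. Since $\lambda-s$ is generally not in $P(N-k,k)$, expand $[\s_{\lambda-s}\V]$ in the Kapranov basis via the dual exceptional collection using (\ref{eq 1}). Applying $f1_{(k,N-k)}$ term by term via Corollary \ref{Corollary 7} annihilates the summands with $\mu_{i}=i$ for some $i$ and sends any remaining $[\s_{\mu}\V]$ to $(-1)^{t(\mu)}[\s_{\nu(\mu)}\V'']$. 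Applying $g^{s}$ then produces $[\s_{\nu(\mu)+s}\V'']$, and a second expansion in the dual basis of $K(\GG(k+1,N))$ extracts the coefficient of each $[\s_{\lambda''}\V'']$ as
\begin{equation*}
\sum_{\mu}(-1)^{|\mu|+|\lambda''|+t(\mu)}\,\gtdim R\Hom(\s_{\mu^{*}}(\CC^{N}/\V),\s_{\lambda-s}\V)\cdot\gtdim R\Hom(\s_{\lambda^{''*}}(\CC^{N}/\V''),\s_{\nu(\mu)+s}\V''),
\end{equation*}
summed over $\mu\in P(N-k,k)$ with $\mu_{i}\neq i$ for all $i$.

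Next, apply the cohomological formula from \S 2.5 of \cite{Kap85} to rewrite the two graded Ext's as $\dim H^{*}(Fl,\LL_{x'_{\mu}})$ and $\dim H^{*}(Fl,\LL_{y'_{\mu}})$ for the characters of Definition \ref{definition 6}; crucially, the re-shuffled partition $\nu(\mu)+s$ is exactly what produces the extra middle entry $t(\mu)+s$ in the character $y'_{\mu}$. Borel-Weil-Bott (Theorem \ref{Theorem 4}) then evaluates each cohomology, splitting the sum according to the dominance of $-x'_{\mu}$ and $-y'_{\mu}$. The $\mu=0$ term must be separated out because it is the only case in which both $-x'_{0}$ and $-y'_{0}$ can be simultaneously dominant: for any $\mu\neq 0$ with $\mu_{i}\neq i$ for all $i$ one has $\mu_{1}\geq 2$ and $\mu^{*}_{1}\geq 1$, so $\mu^{*}_{1}+\mu_{1}\geq 3$, which contradicts the joint dominance inequalities derived from the definitions of $x'_{\mu}$ and $y'_{\mu}$. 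For such $\mu\neq 0$, one checks that each falls cleanly into exactly one of $P'_{1}$, $P'_{2}$, $P'_{3}$ of Definition \ref{definition 7}, each contributing the asserted Weyl-reflected sum with its sign.

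The main obstacle will be propagating the inserted middle entry $t(\mu)+s$ of $y'_{\mu}$ through the Borel-Weil-Bott analysis: this entry is the combinatorial trace of the homological shift $[-t(\mu)]$ in Corollary \ref{Corollary 7}, and matching it correctly with the sign $(-1)^{t(\mu)}$ and with the Weyl-reflection data requires a careful case analysis. One must also verify that the singled-out $\mu=0$ term does not overlap with any $P'_{i}$, which amounts to checking that the defining joint conditions of each $P'_{i}$ fail for $\mu=0$ precisely in the ranges covered by the four sub-cases of the first term, ensuring no contribution is double-counted.
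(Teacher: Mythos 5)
Your proposal is correct and follows essentially the same route as the paper: conjugation $f_{s}=g^{s}fg^{-s}$, expansion in the Kapranov basis via the dual collection, application of Corollary \ref{Corollary 7} with the sign $(-1)^{t(\mu)}$ and reshuffled partition, a second dual-basis expansion, Kapranov's cohomological formula, and a Borel--Weil--Bott case analysis with the $\mu=0$ term treated separately. Your explicit inequality argument ($\mu_{1}\geq 2$, $\mu^{*}_{1}\geq 1$ forcing $\mu^{*}_{1}+\mu_{1}\geq 3$) for why simultaneous dominance only occurs at $\mu=0$ is exactly the content of the paper's observations on $t(\mu)$, just stated more explicitly.
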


\begin{proof}
Similarly to Theorem \ref{Theorem 10}, to calculate $[f_{s}1_{(k,N-k)}]_{[\lambda],[\lambda'']}$, we have to calculate $f_{s}1_{(k,N-k)}([\s_{\lambda}\V])$, which is the following
\begin{align}
	\begin{split}
		&f_{s}1_{(k,N-k)}([\s_{\lambda}\V])=g^{s}fg^{-s}1_{(k,N-k)}([\s_{\lambda}\V]) \\
		&=g^{s}f1_{(k,N-k)}([\s_{\lambda}\V\otimes \det(\V)^{-s}])=g^{s}f1_{(k,N-k)}([\s_{\lambda-s}\V]). \\
		&= g^{s}f1_{(k,N-k)} (\sum_{\mu \in P(N-k,k) } (-1)^{|\mu|} \gtdim R\Hom(\s_{\mu^{*}}(\CC^N/\V), \s_{\lambda-s}(\V)) [\s_{\mu}\V]) \\
		&= g^{s}(\sum_{\mu \in P(N-k,k) } (-1)^{|\mu|} \gtdim R\Hom(\s_{\mu^{*}}(\CC^N/\V), \s_{\lambda-s}\V)f1_{(k,N-k)}([\s_{\mu}\V])). \label{eq 51}
	\end{split}
\end{align}

From Corollary \ref{Corollary 7}, we have $f1_{(k,N-k)}([\s_{\mu}\V])$ is non-zero only when $\mu_{i} \neq i$ for all $i$, and for such $\mu$ there exists an unique $0 \leq t(\mu) \leq k$ such that $f1_{(k,N-k)}([\s_{\mu}\V])=(-1)^{t}[\s_{(\mu_{1}-1,...,\mu_{t(\mu)}-1,t(\mu),\mu_{t(\mu)+1},...,\mu_{k})}\V'']$. For simplicity, we denote $(\mu_{1}-1,...,\mu_{t(\mu)}-1,t(\mu),\mu_{t(\mu)+1},...,\mu_{k})$ to be $\mu(t)$. We have the following observations which will be used in the later analysis of the matrix coefficient.
\begin{enumerate}
	\item If $t(\mu)=0$, then we have $\mu_{i}=0$ for all $i$ and thus $\mu^{*}_{i}=0$ for all $i$.
	\item If $t(\mu) \geq 1$, then we have $\mu_{1} \geq 2$ (since $\mu_{1} \neq 1$)  and thus $\mu^{*}_{1} \geq 1$.
\end{enumerate}

Thus (\ref{eq 51}) becomes 
\begin{align*}
&g^{s}(\sum_{\substack{\mu \in P(N-k,k) \\ \mu_{i} \neq i \ \text{for all} \ i} } (-1)^{|\mu|+t(\mu)} \gtdim R\Hom(\s_{\mu^{*}}(\CC^N/\V), \s_{\lambda-s}\V)[\s_{\mu(t)}\V''] ) \\
& = \sum_{\substack{\mu \in P(N-k,k) \\ \mu_{i} \neq i \ \text{for all} \ i} } (-1)^{|\mu|+t(\mu)} \gtdim R\Hom(\s_{\mu^{*}}(\CC^N/\V), \s_{\lambda-s}\V)[\s_{\mu(t)+s}\V'']  \\
& = \sum_{\substack{\mu \in P(N-k,k) \\ \mu_{i} \neq i \ \text{for all} \ i \\ \lambda'' \in P(N-k-1,k+1)} }  (-1)^{|\mu|+|\lambda''|+t(\mu)} \gtdim R\Hom(\s_{\mu^{*}}(\CC^N/\V), \s_{\lambda-s}\V)\gtdim R\Hom(\s_{\lambda''^{*}}(\CC^N/\V''), \s_{\mu(t)+s}\V'')[\s_{\lambda''}\V''] 
\end{align*}

So the matrix coefficient $[f_{s}1_{(k,N-k)}]_{[\lambda],[\lambda'']}$ is given by 
\begin{align} \label{eq 52}
\begin{split}
&\sum_{\substack{\mu \in P(N-k,k) \\ \mu_{i} \neq i \ \text{for all} \ i }}  (-1)^{|\mu|+|\lambda''|+t(\mu)} \gtdim R\Hom(\s_{\mu^{*}}(\CC^N/\V), \s_{\lambda-s}\V)\gtdim R\Hom(\s_{\lambda''^{*}}(\CC^N/\V''), \s_{\mu(t)+s}\V'') \\
&=\sum_{\substack{\mu \in P(N-k,k) \\ \mu_{i} \neq i \ \text{for all} \ i }}  (-1)^{|\mu|+|\lambda''|+t(\mu)} \gtdim H^*(Fl, \LL_{x'}) \gtdim H^*(Fl,\LL_{y'}).
\end{split}
\end{align} by the same argument as in Theorem \ref{Theorem 10}, where $x'_{\mu}=(\lambda_{k}-s,...,\lambda_{1}-s,-\mu^{*}_{1},...,-\mu^{*}_{N-k})$ and  $y'_{\mu}=(\mu_{k}+s,...,\mu_{t(\mu)+1}+s,t(\mu)+s,\mu_{t(\mu)}-1+s,...,\mu_{1}-1+s,-\lambda_{1}^{''*},...,-\lambda^{''*}_{N-k-1})$.

The sum in (\ref{eq 52}) is non-zero only when $H^*(Fl, \LL_{x'_{\mu}}) \neq 0$ and $H^*(Fl,\LL_{y'_{\mu}}) \neq 0$, and we can use the Borel-Weil-Bott theorem (Theorem \ref{Theorem 4}) to calculate it just like we did in Theorem \ref{Theorem 10} for calculating $[e_{r}1_{(k,N-k)}]_{[\lambda],[\lambda']}$. However, the situation is a bit different here since there is a new parameter $t(\mu)$. We have to study the above two observations (1) and (2) about $t(\mu)$.  

For (1), we have $t(\mu)=0$ and so $\mu_{i}=\mu^{*}_{j}=0$ for all $i,j$. In this case we have $x'_{0}=(\lambda_{k}-s,...,\lambda_{1}-s,0,...,0)$ and  $y'_{0}=(s,...,s,-\lambda_{1}^{''*},...,-\lambda^{''*}_{N-k-1})$. Using Theorem \ref{Theorem 4}, we have the following result.

\begin{align}
	\begin{split}
	&H^*(Fl, \LL_{x'_{0}}) \otimes  H^*(Fl,\LL_{y'_{0}}) \\
	&= \begin{cases}
	(\s_{-x'_{0}}\CC^N)^{\vee} \otimes(\s_{-y'_{0}}\CC^N)^{\vee} & \text{if} \ s \geq \lambda_{1} \ \text{and} \ s \leq \lambda^{''*}_{1}  \\
	(\s_{-x'_{0}}\CC^N)^{\vee} \otimes(\s_{w \star (-y'_{0})}\CC^N)^{\vee}[-l(w)] &  \parbox[t]{.3\textwidth}{ if $s \geq \lambda_{1}$  and  $s > \lambda^{''*}_{1}$ with $\exists !  \ w \in W$ s.t. $w \star (-y'_{0})$ dominant} \\
	(\s_{w' \star (-x'_{0})}\CC^N)^{\vee} \otimes(\s_{-y'_{0}}\CC^N)^{\vee}[-l(w')] &  \parbox[t]{.3\textwidth}{ if $s < \lambda_{1}$  and  $s \leq \lambda^{''*}_{1}$ with $\exists !  \ w' \in W$ s.t. $w' \star (-x'_{0})$ dominant} \\
	(\s_{w_{1} \star (-x'_{0})}\CC^N)^{\vee} \otimes(\s_{w_{2} \star (-y'_{0})}\CC^N)^{\vee}[-l(w_{1})-l(w_{2})] &  \parbox[t]{.3\textwidth}{ if $s < \lambda_{1}$  and  $s >\lambda^{''*}_{1}$ with $\exists !  \ w_{1}, w_{2} \in W$ s.t. $w_{1} \star (-x'_{0})$, $w_{2} \star (-y'_{0})$ dominant} \\
	0  & \ \text{otherwise}
	\end{cases} 
	\end{split} \label{eq 56}
\end{align}

For (2), we have $t(\mu) \geq 1$ and $\mu_{1} \geq 2$. The analysis of this case is pretty much the same as the analysis in Theorem \ref{Theorem 10}. So we leave the details to the reader. The result is the following.

\begin{align} \label{eq 57}
	\begin{split}
	&\sum_{\mu \in P'_{1}(N-k,k)}  (-1)^{|\mu|+|\lambda'|+t(\mu) +l(w)} \dim (\s_{-x'_{\mu}}\CC^N)^{\vee} \dim  (\s_{w \star (-y'_{\mu})}\CC^N )^{\vee} \\
	&+\sum_{ \mu \in P'_{2}(N-k,k)}  (-1)^{|\mu|+|\lambda'|+t(\mu) +l(w')} \dim (\s_{w' \star (-x'_{\mu})}\CC^N)^{\vee} \dim  (\s_{-y'_{\mu}}\CC^N )^{\vee} \\
	&+\sum_{ \mu \in P'_{3}(N-k,k)}  (-1)^{|\mu|+|\lambda'|+t(\mu) +l(w_{1})+l(w_{2})} \dim (\s_{w_{1} \star (-x'_{\mu})}\CC^N)^{\vee} \dim  (\s_{w_{2} \star (-y'_{\mu})}\CC^N )^{\vee} 
	\end{split}
\end{align}

Combining (\ref{eq 56}) and (\ref{eq 57}), we prove the theorem.

\end{proof}

\begin{remark}
	Similarly like Theorem \ref{Theorem 10} the subsets $P'_{i}(N-k,k)$ can be empty depends on the value of $s, \lambda_{1}, \lambda^{''*}_{1}$ where $i=1,2,3$, in such case the sum would just be zero.
\end{remark}

\end{document}